\documentclass[a4paper,reqno]{amsart}
\usepackage{vmargin}
\usepackage[pdftex]{graphicx}
\usepackage{amsfonts}
\usepackage{amssymb}
\usepackage{amsrefs} 
\usepackage{mathrsfs}
\usepackage{stmaryrd}
\usepackage{amsmath}
\usepackage{amsthm}
\usepackage[shortlabels]{enumitem}
\usepackage{nomencl}
\usepackage[bookmarks=true]{hyperref}   % Hyperref in DVI and PDF
\usepackage{esint}
\usepackage{dsfont}
\usepackage{upgreek}
\usepackage{xcolor}
\usepackage{slashed}
\usepackage{scalerel}
\usepackage{comment}
\usepackage{todonotes}
\usepackage[utf8]{inputenc}
\usepackage{skull}
\usepackage{tikz-cd}
\usepackage{faktor}
\usepackage{mathtools}
\usepackage{sansmath}
\usepackage{nicefrac}

\hypersetup{
    colorlinks,%
}

\author{Lashi Bandara}
\author{Georges Habib}
\title{Geometric singularities and Hodge theory}
\date{\today}

\address{Lashi Bandara, 
Deakin Mathematics Group, 
Deakin University Melbourne Burwood Campus, 
221 Burwood Highway, Burwood, Victoria, Australia 3125
}
\urladdr{\href{http://www.lashi.org/}{http://www.lashi.org/}}
\email{\href{mailto:lashi.bandara@deakin.edu.au}{lashi.bandara@deakin.edu.au}}

\address{Georges Habib,
Lebanese University,
Faculty of Sciences II,
Department of Mathematics,
P. O. Box 90656 Fanar-Matn,
Lebanon and University of Lorraine, CNRS, IECL, 54506 Nancy, France
}
\urladdr{\href{http://iecl.univ-lorraine.fr/membre-iecl/habib-georges}{https://iecl.univ-lorraine.fr/membre-iecl/habib-georges}}
\email{\href{mailto:ghabib@ul.edu.lb}{ghabib@ul.edu.lb}}

\keywords{Hodge theory, rough Riemannian metric, de Rham cohomology, Hodge-Dirac operator, Hodge-Dirac-type operator, Hodge-Laplace operator, Hodge decomposition.}
\subjclass[2020]{
Primary: 
47B02,  % Operators on Hilbert spaces
46C07,  % Hilbert subspaces
58A14,  % Hodge theory in global analysis
57R57;  % Applications of global analysis to structures on manifolds
Secondary: 
47A08,	% Operator matrices
58A10.	% Differential forms in global analysis
}

%% NOTES:

%% Lashi Macros

\def\colour{\color}

% Lets get correct spelling 

\def\colour{\colour}

\def\colour{\color}

% Mathematical Definitions:
\newtheorem{theorem}{Theorem}[section]
\newtheorem{corollary}[theorem]{Corollary}
\newtheorem{lemma}[theorem]{Lemma}
\newtheorem{proposition}[theorem]{Proposition}

\newtheorem{definition}[theorem]{Definition}
\newtheorem{remark}[theorem]{Remark}

\newtheorem{example}[theorem]{Example}

%\newcommand{\QED}{\begin{flushright}\ensuremath{\square}\end{flushright}}
%\newcommand{\QED}{\begin{flushright}{\ensuremath{\blacksquare}}\end{flushright}}
%\newcommand{\QED}{\empty\mbox\raggedright\hfill\ensuremath{\square}}

%\newenvironment{proof}[1][Proof]{\begin{trivlist}
%\item[\hskip \labelsep {\bfseries #1}]}{\QED\end{trivlist}}
%
%\newenvironment{proofn}[1][Proof]{\begin{trivlist}
%\item[\hskip \labelsep {\bfseries\it{#1}.}]}{\end{trivlist}}
%
%\newenvironment{example}[1][Example]{\begin{trivlist}
%\item[\hskip \labelsep {\bfseries #1}]}{\end{trivlist}}
%
%\newenvironment{remark}[1][Remark]{\begin{trivlist}
%\item[\hskip \labelsep {\bfseries #1}]}{\end{trivlist}}

% Others: 
	% Sum

% Scale maths objects (via scalare1) package
%\newcommand\scaleobj[2]{\hstretch{#1}{\vstretch{#1}{#2}}}

% Logic definitions:

% Floor and Ceil.

% Mathematical niceities...

\newcommand{\cbrac}[1]{\left(#1\right)}

\newcommand{\dbrac}[1]{\left\{#1\right\}}
\newcommand{\modulus}[1]{|#1|}

\newcommand{\set}[1]{\dbrac{#1}}

\newcommand{\nul}{\mathrm{ker}}
\newcommand{\ran}{\mathrm{ran}}
\newcommand{\dom}{\mathrm{dom}}

\DeclareMathOperator{\ident}{Id}

\newcommand{\comp}{\, \circ\, }

\newcommand{\e}{\mathrm{e}}

\newcommand{\R}{\mathbb{R}}
\newcommand{\Co}{\mathbb{C}}
\newcommand{\C}{\mathbb{C}}

\newcommand{\Ra}{\mathbb{Q}}
\newcommand{\Na}{\ensuremath{\mathbb{N}}}

\newcommand{\script}[1]{\mathscr{#1}}

			% Cut plane
			% Real part
			% Imaginary part

% Set Theory:
\renewcommand{\emptyset}{\varnothing}
\newcommand{\union}{\cup}

\newcommand{\intersect}{\cap}

\newcommand{\rest}[1]{{{\lvert_{}}_{}}_{#1}}
\newcommand{\close}[1]{\overline{#1}}		% closure
	% Image

\newcommand{\powerset}{\script{P}} 

	% Indicator funciton

			% Equivalence class
	% Quotienting

\renewcommand{\epsilon}{\varepsilon}

\renewcommand{\phi}{\varphi}

\newcommand{\graph}{\script{G}}		% Graph
	% Characteristic function

% Topology:

		% Embeded in
		% Embeded in

% Algebra:

%\newcommand{\dual}[1]{#1^{\ast}}		% Algebraic Dual
			% Isomorphic
			% Isomorphic

\newcommand{\tensor}{\otimes}
			% direct sum

		% --->
		% Commutator

\newcommand{\End}{\mathrm{End}} 
\DeclareMathOperator{\Sym}{Sym} 

% Specific Analysis things:
 	% Supremum
	% Infimum

\newcommand{\norm}[1]{\| #1 \|}			% Norm
			% Norm
 		% Operator norm
 	% Operator norm in |.|

\newcommand{\spt}[1]{{\rm spt} {\text{ }}#1}	% Support 
\DeclareMathOperator{\esssup}{esssup}
%\newcommand{\esssup}{{\rm ess {\text{ }}sup} {\text{ }} }	% Essential Supremum

		% Weak Convergence
		% Distance
		% Distance
			% Complex Interpolation
			% Real Interpolation

% Specific GMT/Geometry things:
		% Conneciton 1-form
		% Curvatire 2-form

			% Trace
		% 2nd FF
		% Diam 

			% Length
			% Rad
			% Volume

\DeclareMathOperator{\divv}{div}		% Divergence

			% Kronecker delta

%\DeclareMathOperator{\cut}{\llcorner}			% cut product
\DeclareMathOperator{\cut}{\lrcorner}			% cut product

		% Partial

		% d/dt 
	% d/dx

		% Lie Derivative
			% Lie derivative as [,]
			% Vector Fields X
	% Cristoffel Symbols

\newcommand{\Rcur}{\mathcal{R}}			% Reimannian Curvature Op
			% Ricci Curvature
			% Scalar Curvature

 		% injectivity radius

			% Boundary partial d
			% Boundary partial d
\newcommand{\interior}[1]{\mathring{#1}}	% Interior
			% Interior
		% P-vectors
\DeclareSymbolFont{script}{U}{eus}{m}{n}	% Font to produce wedge product for forms. 
\DeclareMathSymbol{\bwedge}{0}{script}{"5E}	% Actual wedge product symbol which scales within the text
\newcommand{\Forms}[1][{}]{\bwedge^{#1} {\kern 0.05em}}		% P-Forms\\
\newcommand{\Tensors}[1][{}]{{\mathcal{T}}^{(#1)}}	% Tensors
		% Sections (tensor feilds)
	% 
	%
	% Varifold

	% Projection
\newcommand{\tanb}{{\rm T}}		% Tangent Bundle
\newcommand{\cotanb}{{\rm T}^\ast}	% Cotangent bundle
		% Normal bundle
	% Conormal bundle

			% Push forward
			% Pull back

%\DeclareFontFamily{OT1}{restrictfont}{}
%\DeclareFontShape{OT1}{restrictfont}{m}{n}{<-> fmvr8x}{}
%\def\restrictfont{\usefont{OT1}{restrictfont}{m}{n}}
%\newcommand{\restrict}{\text{\restrictfont\char157}}		% Restriction L

\newcommand{\restrict}{\scalebox{1.7}{\,\ensuremath{\llcorner}\,}}  % Restriction L

	% [[M]]
		% Slice <T,f,t>

			% Adjoint star
		% Bi-adjoint 
		% Directional Deriv
\newcommand{\extd}{{\rm d}}			% Exterior Derivative

		% differential d
		% jacobian
	% cojacobian
  % integral
		% Product of Currents

\newcommand{\inprod}[1]{\left\langle #1 \right\rangle}	% inner product braces
			% Gradient delta
		% Connection
			% Almost everywhere
			% Lip\
		% Lip f(x)
	% Volume
%\newcommand{\diam}[1]{{\rm diam}\cbrac{#1}}	% Diam
			% Hausdorff H
\newcommand{\Leb}[1][{}]{\script{L}^{#1}}			% Lebesgue L

		% Upper density
	% Lower density
		% Density 

% Spin Geometry

		% Clifford algebra 
\DeclareMathOperator{\cliff}{{\scaleobj{0.5}{\triangle}}}	% Clifford product 
			% Spin group 
	% bundle of spinors
		% slash 
			% SO(n) 
		% Principal bundle

% Functional Analysis/ Operator Theory

 	% Bounded Linear Functions over #1

\newcommand{\sym}{\upsigma}
\newcommand{\spec}{\mathrm{spec}}				% Spectrum of #1
		% Point Spectrum of #1
		% Reside Spectrum of #1
		% Continuous Spectrum of #1
		% Spectrum of #1
		% Eigenspace

				% Joint spectrum

			% esolvent operator
				% Resolvent Set
			% Polynomial algebra, power series algebra
		% Rational algebra with no poles in #1
		% Power series algebra with radius larger than #1
		% Strong convegence
	% Closed Linear operators over #1
				% Holomorphic functions
\newcommand{\conj}[1]{\overline{#1}}				% complex conjugate
%\newcommand{\conj}{\mathrm{conj}}
				% Numerical Range
			% Numerical radius

\newcommand{\Lp}[2][{}]{{\rm L}^{#2}_{\rm #1}}		% L_{#1}^{#2}
\newcommand{\Ck}[2][{}]{{\rm C}^{#2}_{\rm #1}}		% C^k_c
\newcommand{\Hard}[2][{}]{{\rm H}^{#2}_{\rm #1}}		% H^k_{c}  Sobolev space
		% W^{k,p}_{c} Sobolev space 
%\newcommand{\SobH}[2][{}]{\Sob[#1]{#2,2}}
\newcommand{\SobH}[2][{}]{\Hard[#1]{\rm #2}}
		% L_{#1}^{#2}

				% Trace

% Harmonic ANalysis

			% Mean.
			% Maximal function
				% Cone over a point
					% Tent

% Manifolds etc.

\newcommand{\cB}{\mathcal{B}}

\newcommand{\cE}{\mathcal{E}}

\newcommand{\cM}{\mathcal{M}}

\newcommand{\mg}{\mathrm{g}}
\newcommand{\mgt}{\tilde{\mg}}
\newcommand{\mh}{\mathrm{h}}
\newcommand{\mht}{\tilde{\mh}}
\newcommand{\mut}{\tilde{\mu}}

\newcommand{\odd}{\mathrm{odd}}

\newcommand{\Dir}{{\rm D} }
\newcommand{\DDir}{{\mathbb{D}} }

\newcommand{\Lap}{\Delta}

\newcommand{\dM}{\partial \cM}

\newcommand{\met}{\mathrm{dist}}

\newcommand{\Sph}{\mathrm{S}}

	% Check space
		% Hat space

\renewcommand{\imath}{\mathrm{i}}		% Get a reasonable looking imginary number

\newcommand{\Hom}[1][*]{\mathcal{H}^{#1}}
\newcommand{\Proj}[1]{\mathcal{P}_{#1}}

\newcommand{\Meas}{\mathrm{Meas}}
\newcommand{\Zero}{\mathrm{Zero}} 

\newcommand{\eextd}{\mathbf{d}}
\newcommand{\Hodge}{\mathrm{Hodge}}
\newcommand{\dR}{\mathrm{dR}}

\newcommand{\cc}{\mathrm{cc}}
\newcommand{\dd}{\mathrm{dd}}

\newcommand{\Spa}{\mathcal{X}}
\newcommand{\Hil}{\script{H}}			% Hilbert Space
\DeclareMathOperator{\sgn}{sgn}
\newcommand{\eoplus}{\oplus_{\mathrm{ext}}}

\newcommand{\Sing}{\mathrm{sing}}

\newcommand{\Mat}{\mathrm{Mat}}

% Colours: 

%%% GEORGES MACROS

%%% Redefines:

%%%%%%%%%%%%%%%%%%%%   Environments  %%%%%%%%%%%%%%%%%%%%%%%%

% \newtheorem{theorem}{Theorem}[section]
% \newtheorem{lemma}[theorem]{Lemma}
% \newtheorem{corollary}[theorem]{Corollary}
% \newtheorem{proposition}[theorem]{Proposition}
% \newtheorem{facts}[theorem]{Facts}
%
% \theoremstyle{definition}
% \newtheorem{definition}[theorem]{Definition}
% \newtheorem{example}[theorem]{Example}
% \newtheorem{remark}[theorem]{Remark}
%% \newtheorem{rems}[theorem]{Remarks}

%%%%%%%%%%%%%%%%%%%%%%%%%%%%%%%%%%%%%%%%%%%%%%%%%%%%%%%%%%%%

\sloppy
\setcounter{tocdepth}{1}

% DOCUMENT BEGINS:
\begin{document}

\maketitle
\begin{abstract}
We consider smooth vector bundles over smooth manifolds equipped with non-smooth geometric data.
For nilpotent differential operators acting on these bundles, we show that the kernels of induced Hodge-Dirac-type operators remain isomorphic under uniform perturbations of the geometric data.
We consider applications of this to the Hodge-Dirac operator on differential forms induced by so-called rough Riemannian metrics, which can be of only measurable coefficient in regularity, on both compact and non-compact settings.
As a consequence, we show that the kernel of the associated non-smooth Hodge-Dirac operator with respect to a rough Riemannian metric remains isomorphic to smooth and singular cohomology when the underlying manifold is compact. 
\end{abstract} 
\tableofcontents

\parindent0cm
\setlength{\parskip}{\baselineskip}

\section{Introduction}

Hodge theory, which introduces differential forms to the study of topology, initiated by  Élie Cartan and Georges de Rham, is a classic and well studied topic.   
In modern language, on an oriented smooth closed manifold $\cM$  of dimension $n$, de Rham showed in his thesis \cite{deRham} that there is a canonical isomorphism  between the singular cohomology $\Hom[*]_{\Sing}(\cM)$ and the de Rham cohomology $\Hom[*]_{\dR}(\cM)$.
That is, for each $k\in \{0,\ldots,\dim\cM\}$, we have 
\begin{equation} 
\label{Eq:CanIso} 
\Hom[k]_{\Sing}(\cM) \cong \Hom[k]_{\dR}(\cM) := \faktor{\ker (\extd\rest{\Ck{\infty}(\cM;\Forms[k]\cM)})}{ \ran (\extd\rest{\Ck{\infty}(\cM;\Forms[k+1] \cM)})}, 
\end{equation} 
where  $\Forms[k]\cM \to \cM$  is the bundle of $k$-forms on $\cM$ and $\extd: \Ck{\infty}(\cM; \Forms[k]\cM) \to \Ck{\infty}(\cM; \Forms[k+1]\cM)$ is the exterior derivative.
In other words, de Rham's theorem asserts that the singular cohomology of $\cM$ can  be computed via solutions to $\extd \omega = 0$.

An important analytic perspective is to recover a similar isomorphism from the point of view of elliptic operators.
Namely, if we fix a smooth Riemannian metric $\mg$ on $\cM$, it induces a canonical metric on $\Forms\cM = \oplus_{k=0}^n \Forms[k]\cM$. 
Furthermore,  if we denote by  $\extd^{\dagger,\mg}$ the formal adjoint of $\extd$ via $\mg$, the operator $\DDir_{\mg,\Hodge} = \extd + \extd^{\dagger,\mg}$ called the \emph{Hodge-Dirac} operator, is an important formally self-adjoint  first-order \emph{elliptic}  differential operator. 
Its square, $\DDir_{\mg,\Hodge}^2$, called the \emph{Hodge-Laplacian}, is often the focus of Hodge theory. 
When $\cM$ is assumed to be closed, the Hodge-Dirac operator  admits a unique  self-adjoint  extension $\close{\DDir_{\mg,\Hodge}}$ in $\Lp{2}(\cM;\Forms\cM)$, where this space is induced by the metric $\mg$.
An $\Lp{2}$  version of the Hodge-theorem asserts that 
\begin{equation} 
\label{Eq:Hodge} 
\Hom[k]_{\dR}(\cM) \cong \ker( \close{\DDir_{\mg,\Hodge}}\rest{\Lp{2}(\cM;\Forms[k]\cM)}) = \ker( \close{\DDir_{\mg,\Hodge}}^2 \rest{\Lp{2}(\cM;\Forms[k]\cM)}).
\end{equation}
From this, if $\mh$ is another smooth metric, we see that
\begin{equation} 
\label{Eq:KerIso}  
\ker( \close{\DDir_{\mg,\Hodge}}) \cong  \ker( \close{\DDir_{\mh,\Hodge}}).
\end{equation}
The isomorphism \eqref{Eq:Hodge} is of enormous significance as it  enables uses of elliptic operator methods  to the study of topology.

Inspired by  the isomorphism \eqref{Eq:Hodge} and \eqref{Eq:KerIso},  we formulate the following two questions central to this paper:  
\begin{enumerate}[label=(Q\arabic*)]
\item \label{Q1} 
Does there exist an isomorphism \eqref{Eq:Hodge} in the situation where $\mg$ is no longer smooth? 
\item \label{Q2} 
Does there exist an isomorphism \eqref{Eq:KerIso}  beyond the closed or even compact  manifolds  setting, again for $\mg$ and $\mh$ non- smooth?  
\end{enumerate}

Related questions to \ref{Q1} and \ref{Q2} have already been considered. 
For instance, a slight variation related to \ref{Q1} is pursued by Teleman in \cite{Teleman}.
There, the manifold $\cM$ is assumed to merely Lipschitz - i.e., the differential structure consists of local lipeomorphisms - although it is further assumed to be closed.
The lack of regularity in the differential structure complicates matters as $\Forms[k]\cM \to \cM$ is now a vector bundle with only measurable-coefficient transition maps.
In particular, there is no smooth cohomology $\Hom[k]_{\dR}(\cM)$. 
Therefore, the author instead phrases the question in terms of the $\Lp{2}$-cohomology given by 
\[ 
\Hom[k](\close{\extd}) :=  \faktor{\ker (\close{\extd}\rest{\Lp{2}(\cM;\Forms[k]\cM)})}{\ran (\close{\extd}\rest{\Lp{2}(\cM;\Forms[k+1]\cM)})}.
\]
There, it is shown that $\Hom[k](\close{\extd}) \cong  \ker (\close{\DDir_{\mg,\Hodge}}\rest{\Lp{2}(\cM;\Forms[k]\cM)})$. 
If now $\cM$ was further assumed to be smooth,  then by the well known fact $\Hom[k](\close{\extd}) \cong \Hom[k]_{\dR}(\cM)$,  we obtain a positive answer to \ref{Q1}.

However, our approach follows a different line of inquiry, guided by a desire to consider both \ref{Q1} and \ref{Q2} through a unified perspective.
To keep matters simple, we restrict ourselves $\cM$ possessing a smooth differential structure, although the results we obtain in this paper, appropriately phrased, could be obtained in the Lipschitz setting.
Note that if we obtained a positive answer to  \ref{Q2} and further restrict ourselves to closed $\cM$, we can expect \ref{Q1} to follow. 
With this in mind, we promote \ref{Q2} to be the central question we investigate through identifying a sufficiently general setup so that \ref{Q1} would follow.
We emphasise we only expect an isomorphism and allow for the possibility that the kernels are infinite dimensional.

More explicitly, the setup we consider is to fix $\cM$ an arbitrary smooth manifold, which may even have boundary. 
We fix a smooth vector bundle $\cE \to \cM$ supporting a \emph{rough differential operator} $\Dir: \Ck{\infty}(\cM;\cE) \to \Lp{0}(\cM;\cE)$ (where $\Lp{0}(\cM;\cE)$ is the set of measurable sections).
The geometric data comes from considering a  \emph{locally elliptic compatible measure} $\mu$ (c.f. Definition~\ref{Def:LECM}), which is a Radon measure locally bounded below and which agrees with the intrinsic measure structure of $\cM$. 
Furthermore, we fix $\mh^{\cE}$ to be a \emph{rough Hermitian metric}, which we only ask to be measurable coefficient and locally bounded above and below in an appropriate sense (c.f. Definition~\ref{Def:RHM}).
Under these conditions, we obtain a canonical space $\Lp{2}(\cM;\cE, \mh^{\cE},\mu)$, which captures the geometric data.
In this space, we consider closed extensions $\Dir_e$ of  $\Dir$, that are \emph{nilpotent}, by which we mean that $\ran(\Dir_{e}) \subset \ker(\Dir_{e})$.
As this operator is densely-defined, a consequence of standard operator theory is that there exists a densely-defined adjoint $\Dir_{e}^{\ast,\mu, \mh^{\cE}}$.
The nilpotency assumption is non-trivial,  even in the case that $\ran(\Dir) \subset \Ck{\infty}(\cM;\cE)$ (i.e., $\Dir$ is smooth coefficient) and $\Dir^2 = 0$.
That is, it does not follow that an extension $\Dir_{e}$ is automatically nilpotent.
We highlight this by example in Subsection~\ref{Sec:Nonnil}.

Unlike in the smooth situation, we might find $\Ck[cc]{\infty}(\cM;\cE) \not\subset \dom(\Dir_{e}^{\ast,\mu,\mh^{\cE}})$ (where $\Ck[cc]{\infty}(\cM;\cE)$ the subset of compactly supported smooth sections supported in the interior), despite $\Ck[cc]{\infty}(\cM;\cE) \subset \dom(\Dir_{e})$.
We provide an example of this in Subsection~\ref{Sec:NonsmoothDom}.
Mimicking the smooth metric situation, we need to ensure that $\DDir_{e,\mg} := \Dir_{e} + \Dir_{e}^{\ast,\mu,\mh^{\cE}}$ yields a closed and densely-defined operator so that it can be studied via analytic methods.
In general, the sum of two densely-defined and closed operators may not be densely-defined and closed.
This is a non  issue in the smooth case as $\Ck[cc]{\infty}(\cM;\cE) \subset \dom(\Dir_{e}^{\ast,\mu,\mh^{\cE}})$ and so the operator $\DDir_{e,\mg}$ is automatically densely-defined. 
In our situation, more subtle arguments are needed.

To that end, and motivated by other reasons which will become clear in this paper,   we utilise Hodge-theory for closed, densely-defined and nilpotent operators $\Gamma: \dom(\Gamma) \subset \Hil \to \Hil$ on a Hilbert space $\Hil$ as developed by Axelsson (Rosén)-Keith-McIntosh in \cite{AKMc}.
In this setting, we obtain a Hilbert Hodge decomposition 
\begin{equation} 
\label{Eq:HodgeDecomp}
\Hil = \ker(\Gamma) \cap \ker(\Gamma^\ast) \oplus \close{\ran(\Gamma)} \oplus \close{\ran(\Gamma^\ast)}.
\end{equation}
This decomposition is a special case of Hilbert space Hodge-theory which the authors in \cite{AKMc} establish. 
Their primary purpose is to use this theory to obtain a first-order perspective for the Kato square root problem for divergence form operators, which effectively requires perturbed adjoints resulting in Hodge-Dirac-type operators which are bi-sectorial and in general non-self-adjoint.
This is in an even more general setup than we require in this paper, so we provide an exposition of this theory for the self-adjoint version through explicit, detailed and conceptual proofs in Appendix~\ref{Appendix} of this paper. 
We note that much of the structure theory for Hodge-decompositions in Hilbert spaces and, more generally, in Banach spaces, have been established and utilised by many authors. 
See, for instance \cite{AxMc, GMM, Mon21, Bai20, McMon18, FMcP18, SL16, BMc16, HMcP11,AMcR, HMcP08, Mor12, B13}, although we emphasise that this is by no means an exhaustive list.

The decomposition \eqref{Eq:HodgeDecomp} ensures that the generalised Hodge-Dirac-type operator $\Pi := \Gamma + \Gamma^{\ast}$ is self-adjoint, which in particular means that it is also densely-defined and closed.
As aforementioned, it is generally untrue that the sum of two densely-defined operators will also be densely-defined, and so we provide an explicit example in Subsection~\ref{Ex:HodgeMinDomain} for the exterior derivative with boundary conditions to illustrate why we can expect the sum domain to be dense in the general setting.
Applying this theory to $\Gamma = \Dir_{e}$ then allows us to obtain $\DDir_{e,\mu,\mh^{\cE}}$ as a self-adjoint operator.
Fixing another measure and metric $\mut$ and $\mht^{\cE}$ that are uniformly comparable almost-everywhere to $\mu$ and $\mh^{\cE}$ in an ``$\Lp{\infty}$'' sense from above and below (c.f. Definition~\ref{Def:UniComp}), the first part of our main result in Theorem~\ref{Thm:GenHodgeMain} asserts that
\begin{equation}
\label{Eq:AbsKerIso} 
\ker (\DDir_{e,\mu,\mh^{\cE}}) \cong \ker(\DDir_{e,\mut,\mht^{\cE}}).
\end{equation}
This is a general form of \ref{Q2} and choosing $\cE = \Forms\cM$ and $\Dir = \extd $, we obtain an affirmative answer to Question~\ref{Q2} for \emph{rough Riemannian metrics}  $\mg$ and $\mh$ (measurable-coefficient metrics locally bounded above and below; c.f. Definition~\ref{Def:RRM}), when $\mg$ and $\mh$ are mutually uniformly bounded.

The second part of our main result in Theorem~\ref{Thm:GenHodgeMain} concerns restrictions of this isomorphisms for subspaces when the bundle exhibits a splitting of the form $\cE = \cE_0 \oplus \cE_1$. 
Under commutativity of the projection along this with some power of the operator $\DDir_{e,\mu,\mh^{\cE}}$ and $\DDir_{e,\mut,\mht^{\cE}}$ , we obtain a more refined isomorphism
\[
\ker(\DDir_{e,\mu,\mh^{\cE}}\rest{\cE_i})  \cong  \ker(\DDir_{e,\mut,\mht^{\cE}}\rest{\cE_i}).
\] 
Consequently, again choosing $\cE = \Forms\cM$ and $\Dir = \extd$, when  $\cM$ is closed and  $\mg$ a rough Riemannian metric, we obtain an affirmative answer to \ref{Q1}.
This also justifies and supplants credence to the terminology ``geometric singularity'' which we have used in the  title of this article, as it shows that the underlying cohomology captured by the operators with non-smooth coefficients remain isomorphic to the smooth cohomology of the manifold when it is compact.
In other words, while these operators capture the underlying geometry and fail elliptic regularity, they are structurally unaffected by the singularities present in these metrics. 

Let us briefly describe the conceptual abstract picture behind the isomorphism \eqref{Eq:AbsKerIso}.
Given $B: \Hil \to \Hil$ a bounded, invertible, positive-definite, symmetric operator, we can induce a comparable inner product $(u,v) \mapsto \inprod{Bu,v}$.
Letting $\Gamma^{\ast,B}$ be the adjoint of $\Gamma$ computed with respect to  this  new inner product and $\Pi_B:=\Gamma + \Gamma^{\ast, B}$, we prove that
\[ 
\Hil = \ker(\Pi_B) \oplus \close{\ran(\Gamma)} \oplus \close{\ran(\Gamma^\ast)}.
\]
Here, the sum is only topological, not necessarily orthogonal.
Note that $\ker(\Gamma) \cap \ker(\Gamma^{\ast}) = \ker(\Pi)$, and  from \eqref{Eq:HodgeDecomp}, 
\[
 \ker(\Pi_B) \oplus \close{\ran(\Gamma)} \oplus \close{\ran(\Gamma^\ast)} = \ker(\Pi) \oplus \close{\ran(\Gamma)} \oplus \close{\ran(\Gamma^\ast)}
\] 
from which the isomorphism 
\[ 
\ker(\Pi_B) \cong \ker(\Pi) 
\]
readily follows.
Conceptually speaking, we can see this decomposition as essentially saying that Hodge-Dirac-type operators arising from a change of metric simply sees a ``rotation'' of its kernel within $\Hil$, which results in the isomorphism which we seek. 

It should be noted that the Hodge-decomposition \eqref{Eq:HodgeDecomp} applied to $\Gamma$ obtained with metrics $(u,v) \mapsto \inprod{u,v}$ as well as $(u,v) \mapsto \inprod{Bu,v}$ provides us an alternative isomorphism through
\[
\ker(\Gamma) = \ker(\Pi) \oplus \close{\ran(\Gamma)} = \ker(\Pi_B) \oplus \close{\ran(\Gamma)}. 
\]
However, this is purely algebraic and while an important consequence of the Hodge-decomposition, it evades our desire to introduce operator methods. 
The perspective we develop  illustrates that isomorphisms of kernels are a consequence of the nilpotent structure, whereas the  identification of the kernel itself  typically requires analytic methods.
To this end, it is vital that we obtain a well defined Hodge-Dirac-type operator.
This division of labour is seen best when the metric is smooth and the manifold is compact.
There, we see the nilpotency of the exterior derivative allows yields a $\Lp{2}$-Hodge-theory along with an isomorphism to the smooth cohomology, while the identification of the kernel as having smooth sections arise from elliptic regularity theory for the Hodge-Dirac operator.

One motivation we have in writing this paper is to promote the development of these techniques beyond the non-degenerate setting that we have explored here.
In this paper, we ultimately show that the singularities we study are not exhibited topologically in the sense that isomorphisms of the Hodge-Dirac operator are retained.
However, it would be interesting to know whether these methods could be adapted to study situations where the metric may degenerate and where this degeneration may be captured by operator methods.
This would provide a novel approach to compute topological information in the presence of more severe singularities, those beyond that are ``geometric singularities''.
Such singularities arise in many other important contexts, such as in algebraic geometry and during collapse at maximal time in a geometric flow. 
In particular, there is a desire to obtain Betti numbers for collapsed spaces along a Ricci flow, and this was a question that prompted the results we present here in this paper. 

On a related note, we wish to emphasise that the order of differentiation of our operators plays no role in the isomorphism we obtain.
Although this approach is considered a ``first-order'' approach as it was formulated in the context of the first-order characterisation of the Kato square root problem, it is really the nilpotent structure which lies at the crux of our considerations.
We anticipate this being important in applications where the order of differentiation may change, which occurs in some situations outside of the Riemannian context.
Since we still take an operator perspective, we expect that this will still allow for analytic methods to identify the kernels of operators in question.

This paper is structured as follows.
In Section~\ref{Sec:SetupResults}, we showcase our general setup, taking care to precisely defining the measures, metrics and  their properties.  
The main theorem we prove in this paper is Theorem~\ref{Thm:GenHodgeMain} is presented here.
In Section~\ref{Sec:Examples}, we provide many examples, including applications to Hodge-Dirac operators for non-smooth metrics in Subsections~\ref{Ex:HodgeDirac} and \ref{Ex:HodgeLaplace}, applications to non-compact manifolds in Subsection~\ref{Ex:Noncompact}, Weakly Lipschitz boundary in Subsection~\ref{Sec:WLB}, considerations of relative and absolute cohomology in Subsection~\ref{Sec:RelAbs}, results pertaining to Hodge-magnets as Hodge-Dirac-Schrödinger operators in Subsection~\ref{Sec:Hodge-magnet} and bundle-valued forms and flat connections in Subsection~\ref{Sec:BVF}. 
We also compute particular instances  to highlight the techniques  we use via the way of examples in this section.
In Section~\ref{Sec:GH}, we provide detailed proofs of the necessary properties of the objects involved in the statement of Theorem~\ref{Thm:GenHodgeMain} which we presented in Section~\ref{Sec:SetupResults}. 
Moreover, in this section, we provide the proof of Theorem~\ref{Thm:GenHodgeMain}. 
We also present results for first-order operators here, to make it computationally straightforward  to verify the hypothesis of Theorem~\ref{Thm:GenHodgeMain} for large classes of operators which arise naturally in geometry. 
In Section~\ref{Sec:RRMHodge}, we consider applications to Hodge-Dirac operators arising from rough metrics.
In this latter section, we also consider conditions under which we are guaranteed nilpotent extensions.
As aforementioned, Appendix~\ref{Appendix} contains the abstract results that we use. 
Some of the results here are already known in the literature and this is highlighted as the appendix develops. 

\section*{Acknowledgements}
This research was initiated  at Brunel University London where L.B. was located at the time. 
We thank the Department of Mathematics at BUL for hosting G.H and their hospitality.
G.H. would like also to thank the Atiyah-UK Lebanon fellowship for the financial support.

%We also thank Christian Bär, Veronique Fischer, Francesca Tripaldi and Louis Yudowitz for useful discussions that have contributed to improve this paper. 
We also thank Veronique Fischer, Francesca Tripaldi and Louis Yudowitz for useful discussions that have contributed to improve this paper.

\section{Setup and results}
\label{Sec:SetupResults}
\subsection{Notation} 

Throughout, we use the analysts inequality $x \lesssim y$ to mean that there is some constant $C$ for which $x \leq C y$.
The dependency of the constant $C$ will be clear either from context or it will be explicitly stated. 
By $x \simeq y$, we mean that $x \lesssim y$ and $y \lesssim x$.

We also use Einstein summation convention - when a raised index is multiplicatively adjacent to a lowered index, we implicitly sum over that index.

Throughout, we let $\cM$ be a smooth manifold (with or without boundary), by which we mean that the manifold has a smooth differential structure.
The tensor bundle of covariant rank $p$ and contravariant rank $q$ will be denoted by $\Tensors[p,q] \cM$ and the tangent and cotangent bundles by $\tanb\cM$ and $\cotanb\cM$. 
The bundle of $k$ differential forms will be denoted by $\Forms[k]\cM$ and the wedge product is denoted by $\wedge$.
 
When $\cM$ is equipped with a metric $\mg$, there is a canonical extension to a metric on $\Forms[k]\cM$. 
The wedge project then has an adjoint, the \emph{cut} product denoted by $\cut_{\mg}$, defined as $\mg(\alpha \wedge \beta, \gamma) = \mg(\beta, \alpha \cut_{\mg} \gamma)$.
The Clifford product is denoted by $\cliff_{\mg} = \wedge + \cut_{\mg}$.

For a vector bundle $\cE \to \cM$, we denote the $k$-differential sections by $\Ck{k}(\cM;\cE)$. 
Compactly supported sections (up to the boundary if $\dM \neq \emptyset$) is the subspace $\Ck[c]{k}(\cM;\cE)$ and compactly supported sections with support in the interior (i.e. away from the boundary) will be denoted by $\Ck[cc]{k}(\cM;\cE)$. 
Note that when $\dM = \emptyset$, we have $\Ck[c]{k}(\cM;\cE) = \Ck[cc]{k}(\cM;\cE)$.
The support of a section $\xi: \cM \to \cE$ will be denoted by $\spt \xi$. 

By \emph{measure}, we shall always take it to mean an outer measure (so that it is defined on the entire powerset $\powerset(\cM)$) and take the measure $\sigma$-algebra as those sets satisfying the Carathéodory criterion.
We denote the measure $\sigma$-algebra of $\mu$ by $\Meas(\mu)$ and the measure zero subalgebra by $\Zero(\mu)$.
Given $A \subset \cM$ which is $\mu$-measurable, the restriction measure to $A$ will be denoted by $(\mu \restrict A)(B) = \mu(A \cap B)$.
If $\mu$ and $\nu$ are Radon measures, then Radon-Nikodym derivative of $\mu$ with respect to $\nu$ will be denoted by $\frac{d\mu}{d\nu}$.

\subsection{Measurability, zero measure sets and non-smooth metrics}

The locally Euclidean structure of $\cM$ not only affords us with a topology, but also an  \emph{intrinsic} measure structure.
We begin with the following definition, noting that $\Leb$ is the Lebesgue measure on $\R^n$.
\begin{definition}[Measurable and zero measure]
\label{Def:MeasZero}
Define $\Meas \subset \powerset(\cM)$ and $\Zero \subset \powerset(\cM)$ by:
\begin{align*} 
\Meas &:= \set{A \in \powerset(\cM): \forall (\psi,U) \text{ charts,}\  \psi(A \cap U) \subset \R^n \text{ is } \Leb-\text{measurable}}, \\
\Zero &:= \set{A \in \powerset(\cM): \forall (\psi,U) \text{ charts,}\ \Leb (\psi(A \cap U)) = 0}.
\end{align*}
\end{definition}

In fact $\Zero  \subset \Meas$ and are, in fact, $\sigma$-algebras (c.f.  Proposition~\ref{Prop:MeasZero}). 
Therefore, by calling  $\Meas$ the \emph{(intrinsic) measurable sets} of $\cM$ and  $\Zero$ the \emph{(intrinsic) zero measure sets} of $\cM$, we obtain an  \emph{intrinsic} notion of measurability for $\cM$. 
In particular,  this allows us to talk about measurable sections of a bundle $\cE \to \cM$, as well as the property of an expression being valid almost-everywhere, without having to fix a reference measure.
More precisely, this ensures the following notion is well defined.

\begin{definition}[Measurable section]
\label{Def:MeasSec}  
Let $\cE \to \cM$ and suppose that $\xi:\cM \to \cE$ is a section, by which we mean a map (not necessarily smooth) satisfying $\xi(x) \in \cE_x$.
If inside every local trivialisation, the coefficients of $\xi$ are measurable, we call $\xi$ a measurable section.
We denote the set of measurable sections by $\Lp{0}(\cM;\cE)$.
\end{definition}

We now move onto  considering  more general classes of measures which are compatible with this underlying and intrinsic measure structure. 

\begin{definition}[(Locally elliptic) compatible measures]
\label{Def:CompMeas}
\label{Def:LECM}
A measure $\mu$ is said to be a \emph{$\cM$-compatible measure} or simply a \emph{compatible measure} if  it is a Radon measure with $\Meas(\mu) = \Meas$ and $\Zero(\mu) = \Zero$. If in addition there exists a cover $\set{(U_\alpha, \psi_\alpha)}$ by charts with constants $C_\alpha \geq 1$ such that the Radon-Nikodym derivative of $\mu$ with respect to the pullback Lebesgue measure $\psi_\alpha^\ast (\Leb \restrict \psi_\alpha(U_\alpha))$ in $U_\alpha$ satisfies: 
$$ \frac{1}{C_\alpha} \leq \frac{d(\mu\restrict U_{\alpha})}{d\psi_\alpha^\ast (\Leb \restrict \psi_\alpha(U_\alpha))}(x) \leq C_\alpha$$
for $x$-a.e. in $U_\alpha$, then we call $\mu$ a \emph{locally elliptic compatible measure} or \emph{LECM} for short.
\end{definition}

There are many measures which satisfy this criterion.
For instance, Proposition~\ref{Prop:MeasZero} shows that the induced volume measure $\mu_{\mg}$ associated to a smooth metric $\mg$ is an LECM.
Also weighted measures of the form $\mu_f := f \mu_{\mg}$ where $f \in \Ck{\infty}(\cM;[0, \infty))$ for a smooth $\mg$ preserves the LECM property. 

In the setting of a smooth manifold which we want to equip with non-smooth geometric data,  it is of vital importance that the measure algebra and zero sets measures we consider in Definition~\ref{Def:CompMeas} are equal to the sets $\Meas$ and $\Zero$. 
In what is to follow, we will be looking at non-smooth objects which require these notions of measurability and set of measure zero, in order to formulate a condition holding almost-everywhere.
It will be apparent later that this sense of ``compatibility'' guarantees function spaces and other related objects are well defined.

\begin{definition}[Rough Hermitian metric (RHM)]
\label{Def:RHM} 
Let $\cE \to \cM$ be a vector bundle and let $\mh \in \Lp{0}(\cM; \Sym_{\C} \cE^\ast \tensor \cE^\ast)$. 
We say that $\mh$ is a rough Hermitian metric if there exists a cover for $\cM$ by open sets $U_{\alpha}$  for which  there is a constant $C_\alpha \geq 1$, and smooth Hermitian metric $\mh^{\infty}_{\alpha}$ on $U_{\alpha}$ with 
$$ \frac{1}{C_\alpha} \modulus{u}_{\mh^{\infty}_{\alpha}(x)} \leq \modulus{u}_{\mh(x)} \leq C_\alpha \modulus{u}_{\mh^{\infty}_{\alpha}(x)}$$
for almost every $x \in U_\alpha$ and for all $u \in \cE_x$.
A pair $(U_\alpha,\mh^{\infty}_{\alpha})$ is called a \emph{local comparability structure}.
\end{definition}

It is readily seen that this definition captures all possible smooth Hermitian metrics $\mh$ on a given bundle $\cE \to \cM$ by taking that smooth metric itself as $\mh^\infty_\alpha$ and with the single open set $U_\alpha = \cM$.
If instead $\mh$ was continuous, then on choosing a cover of $\cM$ by precompact open sets, it is clear that any smooth metric on the closure will satisfy the inequality in the definition.
Therefore, this definition also includes all continuous Hermitian metrics on $\cE \to \cM$.

Note that although the bundle $\cE \to \cM$ above can be a real bundle, in which case $\Sym_{\C} = \Sym_{\R}$ meaning real-symmetric, we will almost exclusively be considering complex bundles.
When the bundle is determined by the geometry, for instance the bundle of forms, we will first complexify it in the canonical manner to consider it as a complex bundle.
That being said, for real bundles, we can consider rough Riemannian/real metrics (RRMs).
Later we will see that RRMs are important as non-smooth Riemannian metrics on $\cM$ itself.

Throughout, our primary considerations will involve pairs $(\mu,\mh)$ where $\mh$ is a RHM and $\mu$ a LECM. 
By definition, it is clear that a RHM is $\mu$-measurable in the sense of the LECM, and therefore, we can integrate against these objects to construct Lebesgue spaces by defining
$$\Lp{p}(\cM;\cE, \mu,\mh) := \set{ u \in \Lp{0}(\cM;\cE): \int_{\cM} \modulus{u}^{p}_{\mh}\ d\mu < \infty}$$
when $1 \leq p < \infty$, and for $p = \infty$,
$$ \Lp{\infty}(\cM;\cE,\mu,\mh) := \set{ u \in \Lp{0}(\cM;\cE): \esssup \modulus{u}_{\mh} < \infty }.$$
Note that $\Lp{\infty}(\cM;\cE,\mu,\mh)$ is independent of $\mu$ since all compatible measures share the same sets of measure and measure zero, and therefore, we often simply write it as $\Lp{\infty}(\cM;\cE,\mh)$.
The norms in each case are obvious. 

Moreover, by $\Lp{p}(\cM) = \Lp{\infty}(\cM;\C)$, we mean $p$-integrable functions  $\xi: \cM \to \C$ where the RHM here is implicitly  $\modulus{\cdot}$, the usual absolute value in $\C$.
As before, we emphasise that $\Lp{\infty}(\cM)$ is independent of measure since by definition, the notion of almost-everywhere agree between any two LECMs.

\subsection{Nilpotent differential operators and their behaviour under mutual boundedness}
Since we deal with non-smooth metrics and measures in this paper, we also allow our fundamental operators to possess non-smooth coefficients. 
We begin with the following definition. 

\begin{definition}[Rough differential operator]
\label{Def:RDO}
An operator $\Dir: \Ck{\infty}(\cM;\cE) \to \Lp{0}(\cM;\cE)$ is called a \emph{rough differential operator} or \emph{RDO} if it is a \emph{local} operator (i.e., $\spt \Dir f \subset \spt f$). Given $(\mu,\mh)$ an LECM-RHM pair, if $\Dir_{e}: \dom(\Dir_e) \subset \Lp{2}(\cM;\cE, \mu,\mh) \to \Lp{2}(\cM;\cE,\mu,\mh)$ with $\Ck[cc]{\infty}(\cM;\cE) \subset \dom(\Dir_e)$ and $\Dir_e\rest{\Ck[cc]{\infty}(\cM;\cE)} = \Dir\rest{\Ck[cc]{\infty}(\cM;\cE)}$, then we say that $\Dir_e$ is an extension of $D$.
\end{definition}

Note the \emph{rough} aspect comes here from the fact that we do not demand $\ran(\Dir) \subset \Ck{\infty}(\cM;\cE)$.
This should not be interpreted as some notion of infinite differentiation.
In typical applications, this could arise from adding a non-smooth potential to a smooth coefficient differential operator.

Our forthcoming results will be phrased for closed extensions $\Dir_e$ associated to a rough differential operator $\Dir$ in $\Lp{2}(\cM;\cE, \mu,\mh)$ with respect to a LECM-RHM pair $(\mu,\mh)$.
Certainly, the existence of closed extensions is well known, at least in specific contexts. 

For instance, if $\Dir$ is smooth coefficient, then $\Dir_{\cc}:=\Dir\rest{\Ck[cc]{\infty}(\cM;\cE)}$  is closable. 
However, more generally, this is more subtle and sufficient conditions for existence of closed extensions beyond the smooth context will be treated later. 
A notion that is fundamental to our results is the following. 

\begin{definition}
\label{Def:Nilpotent}
Let $\Dir_{e}$ be an extension (not necessarily closed) of  $\Dir$. 
Then $\Dir_{e}$ is said to be \emph{nilpotent} if $\ran(\Dir_e) \subset \ker(\Dir_e)$.
\end{definition}

Note that if the operator $\Dir_{e}$ is indeed closed, then $\ker(\Dir_e)$ is closed and so nilpotency  self-improves to $\close{\ran(\Dir_e)} \subset \ker(\Dir_e)$.

In reality, when the operator has smooth coefficients, the operators we meet will often satisfy $\Dir^2 = 0$ on $\Ck{\infty}(\cM;\cE)$. 
Under very mild conditions, this would yield that $\Dir_{e}$ is also nilpotent.
However, this is not always the case, as highlighted in the example given in Subsection~\ref{Sec:Nonnil}. 
That being said, in Proposition~\ref{Prop:NilExt}  we provide sufficient conditions to ensure the nilpotency of $\Dir_{e}$.

In what is to follow, we will compare the kernels of operators obtained from distinct LECM-RHM pairs.
For this, and in order to mimic ``$\Lp{\infty}$'' without an underlying reference metric, we define the following.

\begin{definition}[Mutually bounded measures, RHMS and pairs]
\label{Def:UniComp}
\label{Def:UniRHM}
\label{Def:MutBdd}
\begin{enumerate}
\item 
Two LECMs $\mu_1$ and $\mu_2$ are said to be \emph{mutually bounded} if the Radon-Nikodym derivatives satisfy
$$ \frac{d \mu_1}{d \mu_2} \in \Lp{\infty}(\cM)\quad\text{and}\quad \frac{d\mu_2}{d\mu_1} \in \Lp{\infty}(\cM).$$
In this case, we write $\mu_1 \sim \mu_2$. 
\item 
Let $\cE \to \cM$ be a vector bundle and $\mh_1$ and $\mh_2$ be two RHMs for which there exists $C \geq 1$ satisfying 
$$ \frac{1}{C} \modulus{u}_{\mh_1(x)} \leq \modulus{u}_{\mh_2(x)} \leq C \modulus{u}_{\mh_1(x)}.$$
Then, the two metrics are said to be \emph{mutually bounded} with constant $C$ and we succinctly denote this by $\mh_1 \sim \mh_2$.
\item 
If $(\mu_1, \mh_1)$ and $(\mu_2,\mh_2)$ are two LECM-RHM pairs and $\mu_1 \sim \mu_2$ and $\mh_1 \sim \mh_2$, then we say that they are \emph{mutually bounded} and write $(\mu_1, \mh_1) \sim (\mu_2,\mh_2)$.
\end{enumerate} 
\end{definition} 

An important property of mutually bounded pairs $(\mu_1, \mh_1)$ and $(\mu_2,\mh_2)$ is that  $\Lp{2}(\cM;\cE,\mu_1, \mh_1) = \Lp{2}(\cM;\cE,\mu_1, \mh_1)$ as sets with equivalent norms  $\norm{\cdot}_{\Lp{2}(\cM;\cE,\mu_1, \mh_1)} \simeq \norm{\cdot}_{\Lp{2}(\cM;\cE,\mu_2, \mh_2)}$. 
The constant appearing here are related to the constants appearing in Definition~\ref{Def:MutBdd}.
See Proposition~\ref{Prop:LpFix} for a detailed description.
With this in hand, we now present the central result of this paper.

\begin{theorem}[Generalised Hodge-type theorem]
\label{Thm:GenHodgeMain}
For two mutually bounded pairs $(\mu_1,\mh_1)$ and $(\mu_2,\mh_2)$, we have $\Lp{2}(\cM;\cE) := \Lp{2}(\cM;\cE,\mu_1, \mh_1) = \Lp{2}(\cM;\cE,\mu_2, \mh_2)$ with equality as sets with equivalence of norms.
An extension $\Dir_e$  of a  rough differential  operator $\Dir$ is closed and nilpotent with respect to $\norm{\cdot}_{(\mu_1, \mh_1)}$ if and only if it is closed and nilpotent with respect to $\norm{\cdot}_{(\mu_2, \mh_2)}$. 
For such a $\Dir_e$, defining $\DDir_{e,i} := \Dir_e + \Dir_e^{\ast, (\mu_i,\mh_i)}$, where $\Dir_e^{\ast,(\mu_i,\mh_i)}$ is the adjoint of $\Dir_e$ with respect to $\inprod{\cdot,\cdot}_{(\mu_i,\mh_i)}$, the following hold.
\begin{enumerate}[label=(\roman*)]
\item  
\label{Itm:GenHodgeMain:1} 
The operator $\DDir_{e,i}$ is self-adjoint with respect to $\inprod{\cdot,\cdot}_{(\mu_i, \mh_i)}$ and in particular, it is densely-defined and closed. 
More generally, the operators $\DDir_{e,i}$ are bi-sectorial with respect to either norm.
\item  
\label{Itm:GenHodgeMain:2} 
For all $k,l,k',l' \in \Na\setminus\set{0}$, the equalities  $\ker(\DDir_{e,i}^k) = \ker(\DDir_{e,i})$ and $\close{\ran(\DDir_{e,i}^l)} = \close{\ran(\DDir_{e,i})}$ hold, and moreover,
\[
\Lp{2}(\cM;\cE) 
= 
\ker(\DDir_{e,1}^k) \oplus \close{\ran(\DDir_{e,1}^l)} 
= 
\ker(\DDir_{e,2}^{k'}) \oplus \close{\ran(\DDir_{e,1}^{l'})},
\]
where the splitting is topological but not in general orthogonal.
\item 
\label{Itm:GenHodgeMain:3} 
We have that  $\ker(\DDir_{e,1}) \cong \ker(\DDir_{e,2})$ where the isomorphism is given by 
$$ \Phi :=  \Proj{\ker(\DDir_{e,1}), \close{\ran(\DDir_{e,1})}}\rest{\ker(\DDir_{e,2})}: \ker(\DDir_{e,2}) \to \ker(\DDir_{e,1}).$$
The inverse is then given by
$$ \Phi^{-1} :=  \Proj{\ker(\DDir_{e,2}), \close{\ran(\DDir_{e,1})}}\rest{\ker(\DDir_{e,1})}: \ker(\DDir_{e,1}) \to \ker(\DDir_{e,2}).$$
\item 
\label{Itm:GenHodgeMain:4} 
If $\cE = \bigoplus_{j=1}^K \cE_j$ and there exists some power $k \in \Na$  such that $[\DDir_{e,i}^k, \Proj{\cE_j, \oplus_{l\neq j}^K \cE_l}] = 0$ for $i=1,2$ and for all $j = 1, \dots, N$,
then $\ker(\DDir_{e,1}\rest{\cE_j}) \cong \ker(\DDir_{e,2}\rest{\cE_i})$, where the isomorphisms are given by
\[ 
\Phi_j 
:=  
\Proj{\ker(\DDir_{e,1}), \close{\ran(\DDir_{e,1})}}\rest{\ker(\DDir_{e,2}\rest{\cE_j})}: \ker(\DDir_{e,2}\rest{\cE_j}) \to \ker(\DDir_{e,1}\rest{\cE_j}).
\]
\end{enumerate}
\end{theorem}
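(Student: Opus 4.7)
My plan is to reduce the theorem to the abstract Hilbert-space Hodge theory for closed, densely-defined nilpotent operators developed in Appendix~\ref{Appendix}, applied on a single vector space carrying two equivalent inner products. By the hypothesis $(\mu_1,\mh_1) \sim (\mu_2,\mh_2)$, the two $\Lp{2}$ spaces coincide as sets with comparable norms, so closedness and nilpotency of an extension $\Dir_e$ transfer between the two metric structures. Writing $\inprod{\cdot,\cdot}_{(\mu_2,\mh_2)} = \inprod{B\cdot,\cdot}_{(\mu_1,\mh_1)}$ for a bounded, positive, self-adjoint (with respect to the first product) and invertible pointwise multiplication operator $B$ (obtained fibrewise from Radon--Nikodym densities and pointwise metric comparison), we land squarely in the abstract framework $\Pi_B = \Gamma + \Gamma^{\ast,B}$ of the introduction, with $\Dir_e^{\ast,(\mu_2,\mh_2)} = B^{-1}\Dir_e^{\ast,(\mu_1,\mh_1)}B$ on the appropriate domain.

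For \ref{Itm:GenHodgeMain:1}, self-adjointness of each $\DDir_{e,i}$ in its own inner product is a direct application of the abstract Hilbert Hodge theory in the Appendix. Bi-sectoriality with respect to either norm then follows because $B^{1/2}$ realises a bounded, invertible similarity between $\DDir_{e,i}$ viewed in the other norm and a self-adjoint operator, and bi-sectoriality is stable under such similarities. For \ref{Itm:GenHodgeMain:2}, the identity $\ker(\DDir_{e,i}^k) = \ker(\DDir_{e,i})$ is the standard self-adjoint consequence of $\norm{\DDir_{e,i}u}_{(\mu_i,\mh_i)}^2 = \inprod{\DDir_{e,i}^2 u, u}_{(\mu_i,\mh_i)}$, while $\close{\ran(\DDir_{e,i}^l)} = \close{\ran(\DDir_{e,i})}$ follows by duality. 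The orthogonal splitting $\Lp{2} = \ker(\DDir_{e,i}) \oplus \close{\ran(\DDir_{e,i})}$ in the $i$-th inner product stays a topological splitting in the other norm by equivalence, and the non-trivial cross-decomposition asserting that $\ker(\DDir_{e,2})$ is a topological complement to $\close{\ran(\DDir_{e,1})}$ is precisely the perturbed Hodge decomposition $\Hil = \ker(\Pi_B) \oplus \close{\ran(\Gamma)} \oplus \close{\ran(\Gamma^\ast)}$ from the Appendix.

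For \ref{Itm:GenHodgeMain:3}, once $\ker(\DDir_{e,1})$ and $\ker(\DDir_{e,2})$ are both realised as topological complements in $\Lp{2}$ of the \emph{same} closed subspace $\close{\ran(\DDir_{e,1})}$, the map $\Phi$ is automatically a linear topological isomorphism with the stated inverse. For \ref{Itm:GenHodgeMain:4}, the commutator assumption $[\DDir_{e,i}^k, \Proj{\cE_j,\oplus_{l\neq j}\cE_l}] = 0$ implies that the bundle projection $P_j := \Proj{\cE_j,\oplus_{l\neq j}\cE_l}$ preserves both $\ker(\DDir_{e,i}^k) = \ker(\DDir_{e,i})$ and $\close{\ran(\DDir_{e,i}^k)} = \close{\ran(\DDir_{e,i})}$ by \ref{Itm:GenHodgeMain:2}. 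Hence $P_j$ commutes with the Hodge projection $\Proj{\ker(\DDir_{e,i}), \close{\ran(\DDir_{e,1})}}$, and $\Phi$ restricts to an isomorphism $\ker(\DDir_{e,2}\rest{\cE_j}) \to \ker(\DDir_{e,1}\rest{\cE_j})$.

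The principal obstacle is the topological decomposition with a \emph{common} range complement across different metrics in \ref{Itm:GenHodgeMain:2}. Orthogonality is lost under change of inner product, and establishing that $\ker(\DDir_{e,2}) \cap \close{\ran(\DDir_{e,1})} = \set{0}$ together with the existence of a continuous topological splitting requires the full strength of the perturbed Hilbert Hodge theory for nilpotent structures developed in the Appendix; every other assertion then follows structurally once this decomposition is in hand.
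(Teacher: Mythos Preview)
Your proposal is correct and follows essentially the same route as the paper's own proof: reduce everything to the abstract Hilbert-space Hodge theory of Appendix~\ref{Appendix} by realising the two inner products on the common space $\Lp{2}(\cM;\cE)$ as related through a bounded, invertible, self-adjoint multiplication operator $B$ (built from the Radon--Nikodym derivative $\frac{d\mu_2}{d\mu_1}$ and the fibrewise endomorphism $A$ of Proposition~\ref{Prop:MutMat}), then invoke Proposition~\ref{Prop:HodgeDecomp} for~\ref{Itm:GenHodgeMain:1}, Lemma~\ref{Lem:AlphaDecomp} together with Proposition~\ref{Prop:SpaTot} for~\ref{Itm:GenHodgeMain:2}, Theorem~\ref{Thm:AbsHodgeType} for~\ref{Itm:GenHodgeMain:3}, and Corollary~\ref{Cor:AbsHodgeTypeSplittingInt} for~\ref{Itm:GenHodgeMain:4}. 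Your identification of the cross-decomposition $\Lp{2}(\cM;\cE) = \ker(\DDir_{e,2}) \oplus \close{\ran(\DDir_{e,1})}$ as the genuinely non-trivial step, resting on the perturbed Hodge decomposition of Proposition~\ref{Prop:SpaTot}, is exactly right and is in fact more explicit than the paper's own terse citation for part~\ref{Itm:GenHodgeMain:2}.
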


\begin{remark} 
As observed in Remark~\ref{Rmk:AbsHodgeType}, an alternative map for the isomorphism in \ref{Itm:GenHodgeMain:3} is given by 
$\Proj{\ker(\DDir_{e,1}), \close{\ran(\Dir_{e})}}\rest{\ker(\DDir_{e,2})}: \ker(\DDir_{e,2}) \to \ker(\DDir_{e,1})$.  Note that this isomorphism is different to that in  Theorem~\ref{Thm:GenHodgeMain}.
There, the projection is along $\close{\ran(\DDir_{e})}$, whereas here, the projection is along $\close{\ran(\Dir_{e})}$. 
\end{remark}  

\begin{remark} 
We cannot call the operator $\DDir_{e,i}$ a rough differential operator as we have defined this notion.
In fact, even when the operator $\Dir_{e}$ has smooth coefficients, a lack of regularity of $(\mu_i, \mh_i)$  may cause $\Ck[cc]{\infty}(\cM;\cE) \not\subset \dom(\Dir_{e}^{\ast, (\mu_i, \mh_i)})$.
See subsection~\ref{Sec:NonsmoothDom} for an example. 
\end{remark}

\section{Examples}
\label{Sec:Examples}

\subsection{Rough metrics in the wild}
In this subsection, we will show naturally occurring examples of RRMs, RHMs and LECMs from both analysis and geometry.

\subsubsection{Geometry of divergence-form operators}

We recall divergence-form operators with measurable coefficient operators, popularised by the a priori estimates obtained by De Giorgi-Moser-Nash \cite{dG,Moser,Nash}.
These operators can be seen from a geometric perspective to be within the RHM-LECM framework.

Let $A \in \Lp{\infty}(\R^n; \Sym \Mat(\R^n))$ such that $A^{-1} \in \Lp{\infty}(\R^n; \Sym \Mat(\R^n))$, where $\Sym \Mat(\R^n)$ denotes symmetric matrices on $\R^n$.
The divergence-form operator is often written as 
\[
L_{A} := -\divv A \nabla: \dom(\L_{A}) \subset \SobH{1}(\R^n) \subset \Lp{2}(\R^n) \to \Lp{2}(\R^n)
\]  
which we prefer to write equivalently as  $L_{A} = \extd^\ast A \extd$.
Letting $u \in \dom(L_{A})$ and $v \in \SobH{1}(\R^n)$,
\[
\inprod{L_{A} u, v}_{\Lp{2}} 
= 
\int_{\R^n} (\extd^\ast A \extd u)\ \conj{v}\ d\Leb
=
\int_{\R^n} (A \extd u) \cdot \conj{\extd v}\ d\Leb
= 
\inprod{A \extd u, \extd v}_{\Lp{2}}.
\]

Define $\mg_{A}(x) [u,v] := A(x) u \cdot {v}$. 
Clearly, since $A, A^{-1} \in\Lp{\infty}(\R^n; \Sym \Mat(\R^n))$, $\mg_{A}$ is an RRM  on $\R^n$.
Trivially, the Lebesgue measure $\Leb$ is an LECM. 
Complexifying $\cotanb \R^n$ and extending $\mg_{A}$ naturally, we see that  $(\mg_{A}, \Leb)$ is an RHM-LECM pair on the smooth manifold $\R^n$.
Moreover,  $L_{A} = \Delta_{\mg_A,\Leb}$, the Laplacian on $(\R^n, \mg_{A}, \Leb)$.

More generally, let $\cM$ be a smooth manifold with a smooth metric $\mg$.
The Sobolev space $\SobH{1}(\cM)$ is obtained as the maximal domain of $\extd$ with respect to $\mg$.
Let $A, A^{-1} \in \Lp{\infty}(\Sym \End (\cotanb \cM))$. 
Then, the operator $L_{A, \mg} := \extd^{\ast,\mg} A \extd$ is self-adjoint obtained by the energy $\cE_{A,\mg}[u,v] = \inprod{A \extd u, \extd v}_{\Lp{2}(\cotanb\cM)}$ on $\SobH{1}(\cM)$.
As before, writing $\mg_{A}(x)[u,v] := \mg(x) [ A(x)u,v]$, we obtain that $L_{A} = \Lap_{\mg_A, \mu_{\mg}}$ is the Laplacian on  $(\cM, \mg_{A}, \mu_{\mg})$, with respect to the RHM-LECM pair $(\mg_{A},\mu_{\mg})$.

\subsubsection{Lipschitz graphs}

Let $f: \R^n \to \R$ be a locally Lipschitz map.
Then, the graphical map  $\Phi := x \mapsto (x, f(x)): \R^n \to \graph (f)$ generates a Lipschitz hypersurface.
The inverse of $\Phi$ is $\Phi^{-1} = \graph(f) \ni (x,y) \mapsto x$. 
It is immediate that $\Phi$ is a Lipeomorphism.
 
For $u,v \in \tanb_x \R^n$, let 
\[ \mg_f(x) [u,v] 
:= 
(\Phi^{\ast}(x) \inprod{\cdot,\cdot}_{\R^{n+1}})[u,v] 
= \inprod{ \extd \Phi(x) u, \extd \Phi (x)  v}_{\R^{n+1}}.
\]
By construction,  $(\R^n, \mg_f)$ is isometric to $(\graph(f), \inprod{\cdot,\cdot}\rest{\graph(f)})$.  

It is easy to see that $\mg_f$ is a RRM and in general, its coefficients will be at most measurable. 
In fact, the singularities of $f$ can be severe, which can be seen in the case  $n=2$. 
There, given a set of measure zero $Z \subset \R^2$, there exist a locally Lipschitz map $f_Z: \R^2 \to \R$ such that $Z$ is contained in the singular set of $f_Z$.
By choosing $Z = \Ra \times \Ra$, a dense subset of $\R^2$, we obtain that $ \graph(f_Z)$, and hence $(\R^2, \mg_{f_Z})$, contains a dense set of singular points.
Nevertheless, $\mg_{f_Z}$ is still a RRM, and the results of this paper apply. 

This process can also be replicated on a smooth Riemannian manifold $(\cM,\mg)$. 
First consider $f: \cM \to \R$ a locally Lipschitz map, which induces $\cM \ni x \mapsto (x,f(x)) \in \cM \times \R$.
Since $\graph(f) \subset \cM \times \R$, the natural metric to consider  is  $\mh := \mg + dt^2$ on $\cM \times \R$. 
Then, let $\mg_f := \Phi^\ast \mh$, which is an  RRM on $\cM$.

\subsubsection{Finite dimensional Alexandrov spaces of curvature bounded below}

Let $(\Spa,\met)$ be a connected geodesic metric space. 
In particular, the intrinsic metric, given by the Hausdorff $1$-measure, agrees with the metric $\met$. 
Moreover, any two points can be connected by a curve of shortest length.
Alexandrov spaces are a subclass of such metric spaces which carry a notion of  bounded \emph{synthetic sectional curvature}, a definition we will recall.

Let $a, b \in \Spa$ and write $\modulus{ab} = \met(a,b)$.  
Denote a curve between these two points with length $\modulus{ab}$ by $[ab]$.
For three points $a, b, c \in \Spa$, we  let $\triangle abc$ be a triangle given by sides of length  $[ab]$, $[bc]$ and $[ac]$ of lengths $|ab|,\ |bc|,\ |ac|$ respectively.
Note that the points $a,b,c$ do not uniquely define triangles $\triangle abc$. 

In order to definite a curvature lower bound, we need to compare $\triangle abc$ to triangles $\triangle ABC$  in Euclidean space, hyperbolic space, or the sphere. 
To that end, for $k \in \R$, define the \emph{$k$-space} as $\R^2$ for $k = 0$, the sphere of radius $1/ \sqrt{k}$ for $k > 0$ and  hyperbolic space of radius $1/\sqrt{-k}$ for $k < 0$.
By  $\triangle ABC$, we denote a triangle  in $k$-space with the same side lengths as $\triangle abc$.
That is $|ab| = |AB|,\ |bc| = |BC|,\ |ac| = |AC|$. 

If $k \in \R$ and every point $x \in \Spa$ has an open neighbourhood $U_x$ such that for any triangle $\triangle abc$ contained in $U_x$ and any point $d \in [ac]$ satisfies $|bd| \geq |BD|$ for $\triangle ABC$ and $D \in [AC]$ in the $k$-space, then $(\Spa,\met)$ is said to be an Alexandrov space with (sectional) curvature $\geq k$.

Alexandrov spaces with curvature $\geq k$ arise naturally. 
If $k \in \R$ and $D > 0$ and  $(\cM_j, \mg_j)$ are a sequence of $n$-dimensional smooth Riemannian manifolds with diameter $\leq D$ and sectional curvature $\geq k$, then there exists an Alexandrov space $(\Spa_\infty,\met_\infty)$ with curvature $\geq k$, diameter $\leq D$, and Hausdorff dimension $\leq n$  and a  Gromov-Hausdorff convergent subsequence $(\cM_{j_k}, \mg_{j_k})$ which converges to $(\Spa_\infty,\met_\infty)$.

A general Alexandrov space $(\Spa,\met)$ of curvature $\geq k$ for $k \in \R$ is either infinite dimensional or finite and integer dimensional (measured in terms of the Hausdorff dimension). 
To proceed, we further  assume that $(\Spa,\met)$ is an $n$-dimensional   Alexandrov space of curvature bounded below by some $\kappa \in \R$.
However, we do not assume that its diameter is bounded.

Otsu and Shioya in \cite{OS} define a notion of singular points $S_{\Spa}$ associated with $\Spa$. 
In Theorem A, they show that the Hausdorff dimension of this set of singular points is $\leq (n-1)$. 
Moreover, in Theorem B, they show that $\cM := \Spa \setminus S_{\Spa}$ is a $\Ck{1}$-manifold carrying a $\Ck{0}$ Riemannian metric $\mg$, which induces $\met \rest{\cM}$.

Recall that by considering the maximal atlas, there exists a compatible $\Ck{\infty}$ chart for a $\Ck{1}$-manifold. 
Passing to such a chart, $\cM = \Spa \setminus S_{\Spa}$ is a smooth manifold.
Therefore $\mg$ is a RRM on a smooth manifold $\cM$. 
Consequently, the results obtained in this paper apply to Hodge-Dirac operators constructed out of the exterior derivative $\extd$ on $(\cM,\mg)$.
In particular, it is interesting to know whether an appropriate nilpotent extension $\extd_{e}$ can be chosen for $\cM$ such that $\Dir_{e} = \extd_{e} + \extd_{e}^\ast$ encodes information about the entire space $\Spa$.

\subsection{The Hodge-Dirac and Hodge-Laplace operators on compact manifolds without boundary}

Throughout, let $\cM$ be a compact manifold without boundary. 
Fix two rough metrics $\mg$ and $\mh$ (c.f. Definition~\ref{Def:RRM}).

\subsubsection{Hodge-Dirac}
\label{Ex:HodgeDirac}
To consider the Hodge-Dirac operator, denote the exterior derivative by  $\extd$. 
Then, we have
$$\ker(\extd + \extd^{\ast,\mg}) \cong \ker(\extd + \extd^{\ast,\mh}) \cong \Hom[\ast]_{\dR}(\cM),$$
where the second isomorphism is obtained via using an auxiliary smooth metric and invoking the classic Hodge theorem for that metric.
Moreover, 
\[
\ker((\extd + \extd^{\ast,\mg})\rest{\Forms[k]\cM}) 
\cong 
\ker((\extd + \extd^{\ast,\mh})\rest{\Forms[k]\cM}) 
\cong 
\Hom[k]_{\dR}(\cM)
\]
for $0 \leq k \leq n$, where $\Hom[k]_{\dR}(\cM)$ is the $k$-th de-Rham cohomology of $\cM$.
This is proved in Section~\ref{Sec:RRMHodge} as Theorem~\ref{Thm:HodgeDiracThm}.

\subsubsection{Hodge-Laplace}
\label{Ex:HodgeLaplace}
We also gain information regarding the Hodge-Laplacian, which is the typical object studied in the literature. 

Letting $\Lap_{\mg,\Hodge} = (\extd + \extd^{\ast,\mg})^2 = \extd \extd^{\ast,\mg} + \extd^{\ast,\mg} \extd$, we obtain the Hodge theorem 
\[ 
\ker(\Lap_{\mg,\Hodge}) \cong \ker(\Lap_{\mh,\Hodge}) \cong \Hom[\ast]_{\dR}(\cM)
\]
as usually presented in the literature.
Moreover, 
\[ 
\ker(\Lap_{\mg,\Hodge}\rest{\Forms[k]\cM}) \cong \ker( \Lap_{\mh,\Hodge}\rest{\Forms[k]\cM}) \cong \Hom[k]_{\dR}(\cM)
\]
for $0 \leq k \leq n$.
This simply follows from the fact that $\ker(T) = \ker(T^2)$ for a self-adjoint operator $T$ and by Example~\ref{Ex:HodgeDirac}.

\subsubsection{A perturbed Hodge-Dirac}
Let us also consider the operator $\eextd := \imath \extd$, where $\imath = \sqrt{-1}$ and $\extd$ is the exterior derivative.
Clearly, $\eextd$ is nilpotent and  $\eextd^{\ast,\mg} = - \imath \extd^{\ast,\mg}$.
Define $\DDir_{\mg} := \eextd + \eextd^{\ast,\mg} = \imath\DDir_{\mg,\mathrm{diff}}$, where $\DDir_{\mg,\mathrm{diff}} := \extd - \extd^{\ast,\mg}$.

Given another rough metric $\mh$, Theorem~\ref{Thm:GenHodgeMain} yields that $\ker(\DDir_{\mg}) \cong \ker(\DDir_{\mh})$. 
As with the Hodge-Dirac operator, it is readily verified that $\DDir_{\mg}^2$ commutes with the projection to the bundle of $k$-forms, and therefore, 
$\ker(\DDir_{\mg}\rest{\Forms[k]}) \cong \ker(\DDir_{\mh}\rest{\Forms[k]})$.

Note that 
$$\ker(\DDir_{\mg}) = \ker(\imath \DDir_{\mg,\mathrm{diff}}) = \ker(\extd) \cap \ker(\extd^{\ast,\mg}) = \ker(\DDir_{\mg,\Hodge}).$$
Therefore,
from Example~\ref{Ex:HodgeDirac}, we obtain that 
$$ \ker(\DDir_{\mg,\mathrm{diff}}) \cong \Hom[\ast]_{\dR}(\cM)\quad\text{and}\quad \ker(\DDir_{\mg,\mathrm{diff}}\rest{\Forms[k]\cM}) \cong \Hom[k]_{\dR}(\cM).$$
Note that this conclusion could alternatively be obtained on observing that $\DDir_{\mg,\mathrm{diff}}^2 = - \DDir_{\mg,\Hodge}^2 = - \Lap_{\mg,\Hodge}$.

\subsection{Non-compactness}
\label{Ex:Noncompact}

Let us consider a generalisation from the previous set of examples to the non-compact setting.
In the approach we have taken, we can simply work with non-compact manifolds in much the same way, taking care to ensure that we are now working on an extension of the exterior derivative.

More precisely, we let $\cM$ be a smooth manifold without boundary and $\mg$ and $\mh$ two RRMs on $\cM$ that are mutually bounded.
That is, there is a constant $C \geq 1$ such that $C^{-1} \modulus{u}_{\mg} \leq \modulus{u}_{\mh} \leq C\modulus{u}_{\mg}$, see Definition~\ref{Def:UniComp}.
Clearly, $\met_{\mg}$ is complete if and only if $\met_{\mh}$ is complete. 

When $\mg$ is smooth and $\met_{\mg}$ is complete, $\extd$ on $\Ck[cc]{\infty}(\cM;\Forms\cM)$ extends uniquely to an operator on $\Lp{2}(\cM;\Forms\cM)$.
However, if $\met_{\mg}$ fails to be complete, even for smooth $\mg$, we may find that $\extd$  on $\Ck[cc]{\infty}(\cM;\Forms\cM)$ fails to have a unique and canonical extension in $\Lp{2}(\cM;\Forms\cM)$.
Therefore, for an RRM $\mg$, we proceed generally by considering two canonical extensions $\extd_{0} = \close{\extd_{\cc}}$ where $\dom(\extd_{\cc}) := \Ck[cc]{\infty}(\cM; \Forms \cM)$ and $\extd_{2} = \close{\extd_{\dd}}$ with $\dom(\extd_{\dd}) = \set{u \in \Ck{\infty} \cap \Lp{2}(\cM; \Forms\cM): \extd u \in \Lp{2}(\cM; \Forms\cM)}$. 
These are shown to be well defined in Subsection~\ref{Sec:SmoothCoeff}.

Then, by Corollary~\ref{Cor:HodgeDiracThmMinMaxExt}, we obtain that
\begin{align*} 
\ker(\extd_{0} + \extd_{0}^{\ast,\mg}) \cong \ker(\extd_{0} + \extd_{0}^{\ast,\mh}) \quad\text{and}\quad
\ker(\extd_{2} + \extd_{2}^{\ast,\mg}) \cong \ker(\extd_{2} + \extd_{2}^{\ast,\mh}).
\end{align*}

These results also hold if the rough Riemannian metrics $\mg$ and $\mh$ are instead replaced with RHM-LECM pairs $(\tilde{\mg},\mu_1)$ and $(\tilde{\mh}, \mu_2)$ that are mutually bounded.

\subsection{Weakly Lipschitz boundary}
\label{Sec:WLB} 

Let $\cM$  be a manifold with smooth interior but with weakly Lipschitz boundary $\partial \cM  \neq \emptyset$.
We assume that $\mg$ is an RRM on $\cM$. 

\label{Ex:HodgeBoundary}
The operators $\extd_{0} = \close{\extd_{\cc}}$ where $\dom(\extd_{\cc}) := \Ck[cc]{\infty}(\cM; \Forms \cM)$ and $\extd_{2} = \close{\extd_{\dd}}$ with $\dom(\extd_{\dd}) = \set{u \in \Ck{\infty} \cap \Lp{2}(\cM; \Forms\cM): \extd u \in \Lp{2}(\cM; \Forms\cM)}$  are defined  in Subsection~\ref{Sec:SmoothCoeff}. 
Then, by Corollary~\ref{Cor:HodgeDiracThmMinMaxExt}, we obtain that
\begin{align*} 
\ker(\extd_{0} + \extd_{0}^{\ast,\mg}) \cong \ker(\extd_{0} + \extd_{0}^{\ast,\mh}) \quad\text{and}\quad 
\ker(\extd_{2} + \extd_{2}^{\ast,\mg}) \cong \ker(\extd_{2} + \extd_{2}^{\ast,\mh})
\end{align*}
for another RRM $\mh$ which is mutually bounded with respect to $\mg$. 
In particular, if $\cM = \Omega \subset \R^n$, a compactly contained weakly Lipschitz domain, then $\mh$ can be chosen as $\mh = \inprod{\cdot,\cdot}_{\R^n}$, the Euclidean inner product.
In this case, aspects of results in \cite{AxMc} are recovered, although the authors do not consider the metric perturbation question there. 

Note that the weakly Lipschitz boundary case is a special case of the example in Subsection~\ref{Ex:Noncompact}.
As the problem is phrased purely in terms of the operator in the interior, we can instead consider $\interior{\cM}$ and carry out the analysis there.
When there is a boundary that is weakly Lipschitz, it is possible to identify the domains of these interior operators via the boundary restriction map.

\subsection{Relative and absolute boundary conditions and cohomology}
\label{Sec:RelAbs}

Let $(\cM,\mg)$ be a smooth Riemannian manifold with compact boundary.
We fix $\vec{n}$ to be the normal vector to $\partial \cM$ and $\tau$ the associated covector.
As in Example~7.25 in \cite{BB12}, we see that for $x \in \dM$,
\[ 
\Forms[j]\cotanb_x \cM = (\Forms[j] \cotanb_x \partial \cM) \oplus \tau(x) \wedge (\Forms[j-1] \cotanb_x \partial \cM). 
\]
Therefore, $\phi \in \Lp{0}(\partial \cM; \Forms\cotanb \cM)$ decomposes as $\phi = \phi_{t} + \tau \wedge \phi_{n}$.
The absolute and relative boundary conditions for the Hodge-Dirac operator $\extd + \extd^{\dagger}$ are then given by the subspaces
\[ 
B_{\mathrm{abs}} := \set{ \phi \in \SobH{\frac12}(\partial \cM;\Forms\cM): \phi_{n} = 0}\ \ \text{and}\ \ 
B_{\mathrm{rel}} := \set{ \phi \in \SobH{\frac12}(\partial \cM;\Forms\cM): \phi_{t} = 0}.
\]
These are elliptically regular local boundary conditions as demonstrated in Example~7.25 in \cite{BB12}. 

We remark that when $\phi_{n} = 0$, only the ``tangential'' component $\phi_{t}$ remains.
Similarly, when $\phi_{t} = 0$, only the ``normal'' component $\phi_{n}$ remains.
Furthermore, note that 
 \[
\tau \wedge \phi = \tau\wedge \phi_{t} \quad\text{and}\quad \tau \lrcorner \phi = \phi_n. 
\] 
Therefore, $\phi_n = 0$ if and only if  $\tau \lrcorner \phi = 0$  and $\phi_t = 0$ if and only if  $\tau \wedge \phi = 0$. 

This is the view taken in \cite{AxMc}.
Their attention is confined to bounded domains $\Omega \subset \R^n$, albeit they also permit the boundary to be of lower regularity.
For $F \in \Ck{\infty}(\Omega; \Forms\Omega)$, they consider a distribution $F_z$ by extending $F$ by zero to all of $\R^n$ outside of $\Omega$. 
Then, inside $\Omega$
\[ 
\extd F_z = (\extd F)\rest{\Omega} - (\tau \wedge f)\sigma \quad\text{and}\quad
\extd^{\dagger} F_z = \extd^{\dagger}F\rest{\Omega} - (\tau \cut f)\sigma, 
\] 
where $f$ is the restriction to the boundary and $\sigma$ is the surface measure on $\partial \Omega$.
Nevertheless, they observe that $\extd F_z \in \Lp{2}(\R^n; \Forms\R^n)$ if and only if $\tau \wedge f = 0$ and similarly for $\extd^{\dagger} F_z$.
They define ``$\extd$ with normal boundary conditions'' as $\extd_{\close{\Omega}}$ specified by $ \dom ( \extd_{\bar{\Omega}}) = \set{F \in \Lp{2}(\Omega;\Forms\Omega): \extd F_z \in \Lp{2}(\R^n;\Forms\R^n)}$  and ``$\extd^{\dagger}$ with tangential boundary conditions'' as $\extd^{\dagger}_{\close{\Omega}}$ with $ \dom ( \extd^\dagger_{\bar{\Omega}}) = \set{F \in \Lp{2}(\Omega;\Forms\Omega): \extd^\dagger F_z \in \Lp{2}(\R^n;\Forms\R^n)}$.
Moreover, they consider  the maximal extensions and we write them as  $\extd_{\Omega,\max}$ and $\extd^{\dagger}_{\Omega,\max}$ since the metric in this case is Euclidean and smooth.

Furthermore, they show  that $\extd_{\close{\Omega}}^\ast = \extd^{\dagger}_{\Omega,\max}$ and $(\extd^{\dagger}_{\close{\Omega}})^\ast = \extd_{\Omega,\max}$.
Assuming the boundary $\partial \Omega$ is smooth, we apply Example~7.25 in \cite{BB12} to find that the operator  $\Dir_{n} := \extd_{\close{\Omega}} + \extd^{\dagger}_{\Omega,\max}$ precisely corresponds to relative boundary conditions and $\Dir_{t} := \extd^\dagger_{\close{\Omega}} + \extd_{\Omega,\max}$ corresponds to absolute boundary conditions.

Clearly, these calculations hold if $\Omega$ is considered to be a general  Riemannian manifold $(\cM,\mg)$ with compact boundary.
However, this is a moot point for our exposition. 
Here, what is essential is the structure of the operators corresponding to relative and absolute boundary conditions as given above. 
This is key to characterising this to our setting.

To continue, let $\cM$ be a manifold with compact boundary with RRM $\mg$. 
The operator $\extd_{0}$ as defined in Subsection~\ref{Sec:WLB} is then captured by $\extd_{\close{\Omega}}$. 
Similarly, the operator $\extd_{2}$ as defined in Subsection~\ref{Sec:WLB} then precisely captures the operator $\extd_{\Omega,\max}$. 
This is a consequence of Proposition~4.3 in \cite{AxMc}, where they show that $\Ck[cc]{\infty}(\Omega;\Forms\Omega)$ is dense in $\dom(\extd_{\close{\Omega}})$ and where the second equality further requires the observation that $\Ck[cc]{\infty}(\Omega;\Forms\Omega)$ is dense in $\dom(\extd^{\dagger}_{\close{\Omega}})$ and using the fact that $\extd_{\Omega,\max} = (\extd^{\dagger}_{\close{\Omega}})^\ast$.
Consistently generalising the picture we have discussed, we define  $\Dir_{\mathrm{rel},\mg} := \extd_{0} + \extd_{0}^{\ast,\mg}$ which captures the relative boundary conditions and $\Dir_{\mathrm{nor},\mg} := \extd_{2} + \extd_{2}^{\ast,\mg}$.

In this situation, for another RRM $\mh$ such that $\mh \sim \mg$, we again have by Corollary~\ref{Cor:HodgeDiracThmMinMaxExt}
\begin{align*} 
\ker(\Dir_{\mathrm{rel},\mg}) \cong \ker(\Dir_{\mathrm{rel},\mh})\quad\text{and}\quad 
\ker(\Dir_{\mathrm{abs},\mg}) \cong \ker(\Dir_{\mathrm{abs},\mh}). 
\end{align*}
When the manifold $\cM$ is compact, $\mh$ can be chosen to be smooth which provides a tether to the classical results pertaining to relative and absolute cohomology. 

\subsection{Generalised Hodge-magnets}
\label{Sec:Hodge-magnet}

Let us now fix $\cM$ a smooth manifold,  a RHM $\mg$ on $\Forms\cM$ and a LECM $\mu$. 
Fix $\extd_e$ to be a closed nilpotent extension of $\extd_0$ with respect to the data $(\mg, \mu)$.
Furthermore, fix $\omega \in \dom(\extd_e) \cap \Lp{0}(\cM; \Forms[k]\cM)$.
Then, consider the operator $\Dir_{e}$ given by 
\[
\Dir_e u := \extd_e u + \omega \wedge u.
\]
In the language of Definition~\ref{Def:RDO}, this is a rough differential operator.
Since $\extd_e u \in \ran(\extd_e) \subset \ker(\extd_e) \subset \dom(\extd_e)$ and similarly, $\omega \wedge u \in \dom(\extd_e)$,  we write
\begin{align*} 
\Dir_e^2 u 
&= 
\extd_e^2 u + \extd_e(\omega \wedge u) + \omega \wedge( \extd_e u) + \omega \wedge \omega \wedge u\\ 
&= 
(\extd_e \omega) \wedge u + (-1)^{k} \omega \wedge (\extd_e u) + \omega \wedge( \extd_e u)+ \omega\wedge\omega\wedge u.
\end{align*}
We see that if $\omega \in \ker(\extd_e)$ and $k$ is odd, then $\Dir_e$ is nilpotent.
Since odd forms are a vector subspace, we assume from here on that 
\[ 
\omega \in \ker(\extd_e) \cap \Lp{0}(\cM; \Forms[\odd]\cM).
\]
The adjoint is then readily verified to be 
\[
\Dir_{e}^{\ast, (\mg,\mu)} v= \extd_e^{\ast,(\mg,\mu)} v+ \omega \cut_{\mg} v, 
\]
for $v \in \dom(\Dir_{e}^{\ast, (\mg,\mu)}) = \dom(\extd_e^{\ast,(\mg,\mu)})$. 
This is again nilpotent by the abstract theory, namely Lemma~\ref{Lem:AbsAdj}.
With this, we can form the operator which we call the \emph{generalised Hodge-magnet}
\[ 
\DDir_{e, \mg,\mu} := \Dir_e + \Dir_{e}^{\ast, (\mg,\mu)},
\]
with the induced intersection domain. 
Letting $\omega \cliff_{\mg} u  = \omega \wedge u + \omega \cut_{\mg}  u$, we see that 
\[ 
\DDir_{e, \mg,\mu} u= (\extd_{e} + \extd_{e}^{\ast, (\mg,\mu)})u + \omega \cliff_{\mg} u = \DDir_{\mathrm{\Hodge},\mg,\mu} u + \omega \cliff_{\mg} u.
\]

Note that our terminology arises from the fact that when $\omega = \imath \eta \in \ker(\extd_{e}) \cap \Lp{0}(\cM;\Forms[2l+1]\cM)$, the operator $\DDir_{\e,\mh,\mu_{\mh}}$ for a smooth  metric $\mh$ is called the magnetic Hodge-Dirac operator and $\DDir_{\e,\mh,\mu_{\mh}}^2$ the magnetic Hodge-Laplacian.

Letting $(\mh,\nu)$ be another RHM-LECMS pair mutually bounded with respect to $(\mg,\mu)$, invoking Theorem~\ref{Thm:GenHodgeMain} yields that 
\[ 
\Hom_{\dR}(\Dir_{e}) = \faktor{ \ker( \Dir_{e})}{ \close{\ran(\Dir_{e})}}  \cong \ker(\DDir_{e,\mg,\mu}) \cong \ker(\DDir_{e,\mh,\nu}).
\]

\subsection{Bundle-valued forms and flat connections}
\label{Sec:BVF}

Let $\cE \to \cM$ be a vector space over a manifold $\cM$ equipped with a connection $\nabla$ and as before, let  $\Ck{\infty}(\cM;\cE)$ the space of smooth sections. 
We define the space of differential forms valued in $\cE$ as $\Forms[l](\cM;\cE):=\Ck{\infty}(\cM;\Forms[l] \cotanb\cM\otimes \cE)$. 
The space of $\cE$ valued forms of all degrees is denoted by $\Forms(\cM;\cE) = \sum_{i=0}^n \Forms[i](\cM;\cE)$.
We define the wedge product $\wedge:\Forms[k](\cM)\times \Forms[l](\cM;\cE)\to \Forms[k+l](\cM,\cE)$ as the map $\alpha\wedge\beta:=(\alpha\wedge\gamma)\otimes\sigma$  whenever $\alpha\in \Forms[k](\cM),\,\, \beta=\gamma\otimes\sigma\in \Forms[l](\cM;\cE)$.
The connection $\nabla:\Forms[0](\cM,\cE)\to \Forms[1](\cM,\cE)$ can be uniquely extended to the linear map $\extd^\nabla: \Forms[l](\cM;\cE)\to \Forms[l+1](\cM;\cE)$ such that $\extd^\nabla$ satisfies the Leibniz rule: for all $f\in \Ck{\infty}(\cM), \alpha\in \Omega^l(\cM;\cE)$
\[
\extd^\nabla(f\alpha)=\extd f\wedge\alpha + f \extd^\nabla\alpha,\,\, \extd^\nabla(\beta\otimes\sigma)=\extd^\nabla\beta\otimes\sigma+(-1)^k \beta\wedge\nabla\sigma, 
\]
with $\beta\in \Forms^k(\cM),\sigma\in \Ck{\infty}(\cM;\cE)$. 
Letting $\Rcur^\nabla$ denote the curvature of the connection $\nabla$, we have 
\[
(\extd^\nabla)^2\alpha=\Rcur^\nabla\wedge\alpha
\] 
for any $\alpha\in \Forms[l](\cM;\cE)$. 
Here the wedge is defined in the following sense: 
\[
\wedge:\Forms[2](\cM,\End(\cM;\cE))\times \Forms[l](\cM;\cE)\to \Forms[l+2](\cM;\cE); (\alpha\otimes A)\wedge(\beta\otimes b)=(\alpha\wedge\beta)\otimes Ab.
\] 
If the curvature of the connection is zero, then $(\extd^\nabla)^2=0$. 
If we equip $\cM$  and $\cE$ with rough metrics $\mg$ and $\mh^{\cE}$ respectively, then this extends to a canonical rough metric $(\mg,\mh^{\cE})$ on $\Forms(\cM;\cE)$.
Fixing another such metric $(\mgt, \mht^{\cE})$ satisfying $\mg \sim \mgt$ and $\mh^{\cE} \sim \mht^{\cE}$, we have
\[
\ker(\extd^\nabla_e + (\extd^\nabla_e)^{\ast,\mg,\mh^{\cE}}) \cong \ker(\extd^\nabla_e +(\extd^\nabla_e)^{\ast,\mgt, \mht^{\cE}})
\]
for a nilpotent extension $\extd^\nabla_e$ of $\extd^\nabla$.

The operators $\extd^\nabla_0$ and $\extd^\nabla_2$, defined similarly as for $\extd_0$ and $\extd_2$ as in Subsection~\ref{Ex:Noncompact}, directly generalise the results there to this setting.
If $\cM$ is compact without boundary, then there is only a unique extension which we again denote by $\extd^\nabla$, and then we obtain that
\[
\Hom_{\dR}(\cE,\nabla) \cong \ker(\extd^\nabla + (\extd^\nabla)^{\ast, \mg, \mh})\quad  \text{and} \quad \Hom[k]_{\dR}(\cE,\nabla) \cong \ker((\extd^\nabla + (\extd^\nabla)^{\ast, \mg, \mh})\rest{\Forms[k]\cE})
\]
for a pair of rough metrics $\mg$ and $\mh$ on $\cM$ and $\cE$ respectively.
Similarly, considering $\Lap_{\mg,\mh} := (\extd^\nabla + \extd^{\nabla})^{\ast, \mg,\mh})^2 = \extd^\nabla(\extd^{\nabla})^{\ast, \mg,\mh} + (\extd^{\nabla})^{\ast, \mg,\mh}\extd^\nabla$, the corresponding generalisations of Subsection~\ref{Ex:HodgeLaplace} hold.
Since the results in Subsection~\ref{Ex:HodgeBoundary} can be considered a special case of the results in Subsection~\ref{Ex:Noncompact}, the setup here directly generalises the results there also.
Furthermore, Hodge-magnets as in Subsection~\ref{Sec:Hodge-magnet} can also be considered at this level of generality. 

\subsection{Bounded elliptic weights}
\label{Eq:EllWeights}
On $\cE \to\cM$ and $\cM$ a smooth manifold equipped with a RHM-LECM pair $(\mh,\mu)$, we can consider \emph{bounded elliptic weights} $f, \frac{1}{f} \in \Lp{\infty}(\cM;\R)$.
For two such weights $f_1$ and $f_2$, we can consider the induced RHM-LECM pair $(f_1\mh, f_2 \mu)$.
By the ellipticity condition, it is clear that $(f_1 \mh, f_2 \mu) \sim (\mh,\mu)$ and therefore, these weights are simply a subclass of the mutually bounded RHM-LECM pairs we consider.

\subsection{Nontriviality of the intersection of the minimal domain and the domain of its adjoint}
\label{Ex:HodgeMinDomain}
For a densely-defined, closed extension $\Dir_{e}$ of a differential operator $\Dir$, it is non-trivial why we can expect the Hodge-Dirac-type operator $\Dir_{e} + \Dir_{e}^\ast$ to also be densely-defined and closed.
While we prove this in a very general setup in Proposition~\ref{Prop:DenselyDefined}, we highlight as to why we expect this to be true through an explicit example.

Let $\cM$ be a smooth compact manifold with smooth boundary $\partial \cM \neq \emptyset$.
Fix a smooth metric $\mg$ and let $\extd: \Ck{\infty}(\cM;\Forms\cM) \to \Ck{\infty}(\cM;\Forms\cM)$ be the exterior derivative. 
Denote the the formal adjoint of $\extd$ with respect to $\mg$ by  $\extd^\dagger$.

The maximal extension $\extd_{\max} = (\extd^\dagger\rest{\Ck[cc]{\infty}(\cM;\Forms\cM)})^\ast$ and the minimal extension is obtained by $\extd_{\min} = \close{\extd\rest{\Ck[cc]{\infty}(\cM;\Forms\cM)}}$.
Note that $\extd_{\min} = \extd_0$ as outlined in Example~\ref{Ex:HodgeBoundary}.
Therefore, $\extd_{\min}$ is, in fact, closed, densely-defined, and nilpotent operator.

Let $\Dir_{0} := \extd_{\min} + \extd_{\min}^\ast$, with domain $\dom(\extd_{\min}) \cap \dom(\extd_{\min}^\ast)$.
From Theorem~\ref{Thm:GenHodgeDiracMain}, we conclude that $\Dir_{0}$ is a self-adjoint operator.
This may seem surprising and on face value, this intersection might be very small.

Let us now consider the formally self-adjoint Hodge-Dirac operator, $\Dir = \extd + \extd^\dagger: \Ck{\infty}(\cM;\Forms\cM)\to \Ck{\infty}(\cM;\Forms\cM)$. 
This is an elliptic first-order differential operator which is of Dirac-type. 
Let the minimal extension be given by  $\Dir_{\min} = \close{\Dir\rest{\Ck[cc]{\infty}(\cM;\Forms\cM)}}$.
The closure of this operator exists since  $\Dir_{\max} = (\Dir\rest{\Ck[cc]{\infty}(\cM;\Forms\cM)})^\ast$ exists by the fact that $\Dir$ is formally self-adjoint. 
Since $\Dir_{0}$ is an extension, we have that $\Dir_{\min} \subset \Dir_{0} \subset \Dir_{\max}$. 

Applying Corollary~6.6 in \cite{BB12}, we obtain that $\dom(\Dir_{\min}) = \SobH[0]{1}(\cM; \Forms\cM)$.
But applying Corollary~3.13 in \cite{BGS}, we have that $\spec(\Dir_{\min}) = \Co$, the entire complex plane.
Since $\Dir_{0}$ is self-adjoint, we must have that $\Dir_{\min} \neq \Dir_0$.
Similarly, Corollary~3.13 in \cite{BGS} shows us that $\Dir_{0} \neq \Dir_{\max}$.
That is $\Dir_{\min} \subsetneqq \Dir_{0} \subsetneqq \Dir_{\max}$.

While this abstract reasoning demonstrates this strict containment, let us construct a particular instance that shows $\Dir_{\min} \neq \Dir_{0}$.
For that, set $\cM := B = \set{ u \in \R^3: \modulus{u} \leq 1}$, the unit ball in $\R^3$.
We let $\mg = \inprod{\cdot,\cdot}_{\R^3}$, the usual Euclidean metric.

To show $\Dir_{\min} \neq \Dir_{0}$, since $\dom(\Dir_{\min}) = \SobH[0]{1}(B;\Forms\R^3)$, we show that there exists $f \in \Ck{\infty}(B;\Forms\R^3) \cap \dom(\Dir_{0})$ with $f\rest{\partial B} \neq 0$.

Let $f \in \Ck{\infty}(B; \Forms\R^3)$ and write in polar coordinates
\begin{equation} 
\label{Eq:f}
f(r,\theta, \phi) = f_{r}(r,\theta,\phi)\ dr + f_{\theta}(r,\theta,\phi)\ d\theta  + f_{\phi}(r,\theta,\phi)\ d\phi. 
\end{equation}
A calculation then yields 
\begin{equation} 
\label{Eq:df}
\begin{aligned}
(\extd f) 
= 
(\partial_r f_{\theta} - \partial_\theta f_r)\ dr \wedge d\theta +
(\partial_r f_{\phi} - \partial_{\phi} f_r)\ dr \wedge d\phi +
(\partial_\theta f_{\phi} - \partial_{\phi} f_\theta)\ d\theta \wedge d\phi.
\end{aligned} 
\end{equation}

We ask for the first component to be zero, that is $\partial_r f_\theta = \partial_\theta f_r$.
This can be achieved if $f_\theta$ and $f_r$ are constant functions in $r$ and $\theta$.
We also would like to remove the difference terms to make calculations easier.
Therefore, for functions $a, b \in \Ck{\infty}(\Sph^1)$  to be chosen later, we make the ansatz
\begin{equation*} 
f_r(r, \theta, \phi) = a(\phi), \quad 
f_\theta(r, \theta, \phi) = b(\phi), 
\quad\text{and}\quad
f_\phi(r,\theta,\phi) \equiv 0. 
\end{equation*} 
Therefore, Equation~\eqref{Eq:df} then simplifies to 
\begin{equation}
\label{Eq:df2}
(\extd f)(r,\theta,\phi) = -a'(\phi)\ dr \wedge d\phi  - b'(\phi)\ d\theta \wedge d\phi.
\end{equation}
It is readily verified that 
\[
\norm{f}_{\extd} \simeq \norm{f}_{\Lp{2}(B)} + \norm{\extd f}_{\Lp{2}(B)} \simeq  \norm{a}_{\SobH{1}(\Sph^1)} + \norm{b}_{\SobH{1}(\Sph^1)}.
\]

In order to show that $f \in \dom(\extd_{\min})$, we need to show that there exists $f_{\epsilon} \in \Ck[cc]{\infty}(B; \Forms\R^3)$ such that $f_\epsilon \to f$ in $\dom(\extd_{\max})$.
For $\epsilon \in (0,1)$, using $\chi_{B(1- \epsilon)}$, the characteristic function on $B(1 - \epsilon)$ or the ball of radius $1 - \epsilon$, through mollification, we can find $\eta_{\epsilon} \in \Ck[cc]{\infty}(B;[0,1])$ such that $\eta_{\epsilon} = 1$ on $B(1 - \epsilon)$ and $\eta_{\epsilon} = 0$ on $B \setminus B(1 - 10^{-1000} \epsilon)$.
Let 
\[
f_{\epsilon}(r,\phi,\theta) := \eta_{\epsilon}(r) a(\phi)\ dr   + \eta_{\epsilon}(r) b(\phi)\ d\theta. 
\]
Then,  via \eqref{Eq:df}, 
\[
(\extd f_{\epsilon})(r,\phi,\theta) = \eta_{\epsilon}'(r) b(\phi)\ dr\wedge d\theta 
- \eta_{\epsilon}(r) a'(\phi)\ dr \wedge d\phi 
- \eta_{\epsilon}(r) b'(\phi)\ dr \wedge d\phi.
\]
The term $\eta_{\epsilon}'$ cannot be controlled uniformly in $r$ as $\epsilon \to 0$, so let us further choose $b \equiv 0$.
In that case 
\begin{align*} 
&f(r,\phi,\theta) =  a(\phi)\ dr  \\ 
&f_{\epsilon}(r,\phi,\theta) = \eta_{\epsilon}(r) a(\phi)\ dr  \\ 
&(\extd f)(r,\phi,\theta) = (\extd f)(r,\theta,\phi) = -a'(\phi)\ dr \wedge d\phi\\
&(\extd f_{\epsilon})(r,\phi,\theta) =- \eta_{\epsilon}(r) a'(\phi)\ dr \wedge d\phi.
\end{align*}
Therefore,
\[ 
\norm{f - f_{\epsilon}}_{\extd_{\max}}
\simeq 
\norm{( 1 - \eta_{\epsilon})a }_{\Lp{2}(\Sph^1)} + \norm{( 1 - \eta_{\epsilon})a' }_{\Lp{2}(\Sph^1)}
\] 
and clearly, via the Dominated Convergence Theorem, the right hand side tends to $0$ as $\epsilon \to 0$.
This shows that $f \in \dom(\extd_{\min})$ but $f(1,\theta,\phi) = a(\phi)\ dr \neq 0$ if $a \in \Ck{\infty}(\Sph^1)$ is chosen such that $a \neq 0$.
In polar coordinates, $f \rest{\partial B} = f(1,\cdot,\cdot) \neq 0$. Since $\dom(\Dir_{0}) = \dom(\extd_{\min}) \cap \dom((\extd_{\min})^\ast)$, we need to still show that $f \in \dom((\extd_{\min})^\ast)$. 
For that, first note $(\extd_{\min})^\ast = \extd^\dagger_{\max}$. 
But $\Ck{\infty}(\cM; \Forms\R^3) \subset \dom(\extd^\dagger_{\max})$ and this shows that $f \in \dom((\extd_{\min})^\ast)$.
In fact, 
\[
\extd^\dagger f = da \cut\ dr = (\partial_r a\ dr + \partial_\theta a\ d\theta + \partial_\phi a\ d\phi) \cut\ dr = 0.
\] 
So we have that $f \in \ker((\extd_{\min})^\ast)$. Together, we have shown $f\rest{\partial B} \neq 0$ but with $f \in \dom(\Dir_{0})$. Note that since we have freedom to choose $a \in \SobH{1}(\Sph^1)$, we can equally as well take $a \equiv \lambda \in \C$, a constant.
In that case, we see that $f \in \ker(\extd_0) \cap \ker(\extd_0^{\ast,\mg}) = \ker(\Dir_0)$.
Therefore, even  solutions to $\Dir_0 u = 0$ might have a non-zero restriction to the boundary.
Compare this with $\Dir_{\min}$, where by construction, all solutions $\Dir_{\min} u = f$ must necessarily vanish on the boundary.  We further remark that in Sections~7~and~10 in \cite{RosenBook}, for bounded Lipschitz domains in Euclidean space $\Omega \subset \R^n$, the domain of $\extd_{\min}$ is characterised precisely as the forms which have a vanishing tangential term at the boundary.
The calculation here produces an explicit example of such a form.

\subsection{Non-nilpotency of a closed extension}
\label{Sec:Nonnil} 

In Definition~\ref{Def:Nilpotent}, the notion of a nilpotent extension is characterised.
We show here that this is a non-trivial definition.
That is, even in the case when $\mg$ is smooth and $\extd^2 =0$ on smooth forms, there are extensions of $\extd$ which are non-nilpotent.

Firstly, let us carry forth the setup for the previous subsection, Subsection~\ref{Ex:HodgeMinDomain}. 
That is, we fix $\cM := B \subset \R^3$, the unit ball in $\R^3$. In polar coordinates, we have seen that $f \in \Ck{\infty}(B;\Forms[1]\R^3)$ is described as in equation~\eqref{Eq:f} and $\extd f$ as in equation~\eqref{Eq:df}. Let us define the operator $\extd_{\theta}$ as the closure of $\extd_{\theta,c}$, where
\[  
\dom(\extd_{\theta,c}) := \set{ u\in \Ck{\infty}(B;\Forms \R^3): d\theta \cut (u \rest{\partial B}) = 0}.
\]
Note that $d\theta \cut f = f_{\theta}$  since $f \in \Ck{\infty}(B;\Forms[1]\R^3)$ and therefore, the condition $d\theta \cut (f \rest{\partial B}) = 0$ occurs if and only if $f_{\theta}(1,\theta,\phi) = 0$.
To fix a particular $f$, let us set $f_{r}, f_{\theta} \equiv 0$ and $f_{\phi} (r,\phi,\theta) = a(\theta)$, where $a \in \Ck{\infty}(\Sph^1)$.
Therefore, 
\[ 
f(r,\theta,\phi) = a(\theta)\ d\phi \quad \text{and}\quad (\extd f)(r,\theta,\phi) = a'(\theta)\ d\theta \wedge d\phi.
\]
Since 
\[ 
d\theta \cut (\extd f)\rest{\partial B} = - a'(\theta)\ d\phi
\]
we ensure that we further  choose $a'(\theta) \neq 0$, i.e. $a(\theta)$ a constant.  This guarantees that $\extd_{\theta,c}$ is not a nilpotent extension, but its closure $\extd_{\theta}$ could very well be.
However, in what is to follow, we show that $\extd_{\theta}$ also fails to be nilpotent. To show that $\extd_{\theta}$ is not a nilpotent extension, by Corollary~\ref{Cor:NilExt0}, it suffices to show that $\extd f \not \in \dom(\extd_{\theta})$. 
We show this through contradiction. To that end, suppose that $v_n \to \extd f$ in the graph norm of $\extd$ with $v_n \in \dom(\extd_{\theta,c})$. Since $\extd f \in \Ck{\infty}(B;\Forms[2]\R^3)$, we can further assume that $v_n \in \Ck{\infty}(B; \Forms[2]\R^3)$.
Let us write
\begin{equation*} 
\label{Eq:v2}
v_n(r,\theta,\phi) 
:= v_{n,r\theta}\  dr \wedge d\theta + v_{n,r\phi}\ dr \wedge d\phi + v_{n,\theta \phi}\ d\theta \wedge d\phi, 
\end{equation*}
and so $v_n \in \dom(\extd_{\theta,c})$ means precisely that 
\[
0 = d\theta \cut  (v_n)\rest{\partial B} = - v_{n,r\theta}\rest{\partial B}\  dr +  v_{n,\theta \phi}\rest{\partial B} d\phi. 
\]
That is, $v_{n,r\theta}\rest{\partial B} = 0$ and $v_{n,\theta \phi}\rest{\partial B} = 0$ and therefore,
\[
v_n\rest{\partial B} = v_{n,r\phi}\rest{\partial B}\ dr \wedge d\phi. 
\]
In particular, this implies that $dr \wedge (v_n\rest{\partial B}) = 0$, which means that $v_n \in \dom(\extd_{\min})$ by Sections 7 and 10 in \cite{RosenBook}, where we have defined $\extd_{\min}$ in the previous Subsection~\ref{Ex:HodgeMinDomain}. 
That means that $\extd f \in \dom(\extd_{\min})$.
By the same results of \cite{RosenBook}, we would then have that
\[ 
0 = dr \wedge \extd f\rest{\partial B} = dr \wedge (a'(\theta)\ d\theta \wedge d\phi) = a'(\theta)\ dr \wedge d\theta \wedge d\phi
\]
which implies that $a'(\theta) = 0$. 
However, we chose $a$ such that  $a'(\theta) \neq 0$ which yields the contradiction.
Therefore, $\extd f \not \in \dom(\extd_{\theta})$ and so by Corollary~\ref{Cor:NilExt0}, the extension  $\extd_{\theta}$ is not nilpotent, despite $\extd$ being a smooth coefficient operator satisfying $\extd^2 = 0$ on $\Ck{\infty}(B;\Forms \R^3)$.

\subsection{Non-smooth metrics and rough differential operators}
\label{Sec:NonsmoothDom}

The notion of a rough differential operator is outlined in Definition~\ref{Def:RDO}.
This definition is motivated by the exterior derivative, which for a smooth manifold always acts on smooth sections as it is independent of the metric. 
However, if it is perturbed additively while maintaining nilpotency (see Subsection~\ref{Sec:Hodge-magnet}), it can map smooth sections to measurable sections. 
This makes it a ``rough'' operator, but it is still capable of acting on smooth sections. Since nilpotency is preserved for the adjoint operator, it might be expected that the rough differential operator property is also preserved for the adjoint.
However, we show here that this is not the case. 

To that end, fix $\cM := \R^2$ and let a RRM on $\R^2$ be given by
\[ 
\mg (x_1,x_2) = \begin{pmatrix}
		(2 + w(x_1))^{\frac 23} & 0 \\
		0 & 1
		\end{pmatrix}, 
\]
where $w: \R \to [-1,1]$ is the Weierstrass function defined by 
\[ 
w(x) := \sum_{k=0}^\infty 2^{-k} \cos ( 14^k \pi x).
\]
Recall that this is an everywhere continuous but nowhere differentiable function. 
As deduced from the proof of Theorem~\ref{Thm:GenHodgeDiracMain}, we obtain $\extd^{\ast,\mg} = B^{-1} \extd^{\ast} B$, and the coefficient matrix on $\Forms[1]\cM$ is given by 
\[
B(x_1,x_2)\rest{\Forms[1]\R^2}
= \mg(x_1,x_2) \sqrt{\det \mg(x_1,x_2)}
= \begin{pmatrix}
		2 + w(x_1) & 0 \\
		0 & (2 + w(x_1))^{\frac 13}
\end{pmatrix}.
\]
Suppose now that $\Ck[c]{\infty}(\cM;\Forms \R^2) \subset \dom(\extd^{\ast,\mg})$.
In particular, we would get that $ \omega(x) := \eta(x)\ dx^1 \in \dom(\extd^{\ast,\mg})$, where $\eta \in \Ck[c]{\infty}(\R^2)$ with $\eta = 1$ on the Euclidean unit ball $B_{1}(0)$.
However, from Proposition~\ref{Prop:HilAdjChange}, we have that $\dom(\extd^{\ast,\mg})  = B^{-1} \dom(\extd^{\ast})$ or equivalently, $B\, \dom(\extd^{\ast,\mg})  = \dom(\extd^{\ast})$. 
In particular, we have that  $B(x_1,x_2) \omega = (2 + w(x_1)) \eta(x_1, x_2)\ dx^1 \in \dom(\extd^{\ast})$.
For every function $f \in \Ck[c]{\infty}(B_1(0))$, we have that 
 \[
\inprod{\extd^{\ast}B\omega, f} = \inprod{B\omega, \extd f} = \int_{\R^2} (2 + w(x_1)) \eta(x_1,x_2) \partial_1 f\ dx = \int_{B_1(0)} (2 + w(x_1)) \partial_1 f\ dx
\]  
since $\eta = 1$ on the unit ball.
By further considering functions $f = (x_1, x_2) \mapsto h(x_1)\xi(x_2)$ where $h \in \Ck[c]{\infty}(\R)$ with $\spt h \subset [-1,1]$ and $\xi \in \Ck[c]{\infty}(\R)$ with $\xi = 1$ on $[-1,1]$, we deduce that  $y \mapsto 2 + w(y)$ is weakly differentiable  on $[-1,1]$ and hence, $y \mapsto w(y)$ is weakly differentiable on $[-1,1]$.
 However, it is known that the Weierstrass function is nowhere differentiable and, therefore, this leads to a contradiction. Thus, $\Ck[c]{\infty}(\cM;\Forms \R^2)$ is not a subset of  $\dom(\extd^{\ast,\mg})$.  By a similar argument, it is easy to see that $\Ck[c]{\infty}(\cM; \Forms \cM)$ is not in general a subspace of $\dom(\Dir_{\mg})$ when the metric $\mg$ is non-smooth.

\section{Generalised Hodge theorem for rough differential operators}
\label{Sec:GH}

\subsection{Measurability and rough differential operators}

Throughout, let $\cM$ be a manifold with a smooth differential structure.
We  allow  $\cM$ to either have boundary or be boundaryless. 

Recall the sets $\Meas$ and $\Zero$ from Definition~\ref{Def:MeasZero}.
These sets, as the names suggest, are $\sigma$-algebras as demonstrated below.
They will be used as an intrinsic instrument to discuss measurability and also to establish the notion of a property holding almost-everywhere. 
To begin with, we note the following. 

\begin{proposition}
\label{Prop:MeasZero}
For any smooth Riemannian metric $\mg$ on $\cM$, we have that $\Meas = \Meas(\mu_\mg)$ and $\Zero = \Zero(\mu_\mg)$ where $\mu_{\mg}$ is the induced Riemannian measure.
In particular, the collections $\Zero$ and $\Meas$ are $\sigma$-algebras satisfying $\Zero \subset \Meas$.
\end{proposition}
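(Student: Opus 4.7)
The plan is to reduce the verification of the intrinsic definitions to a single countable chart cover and then use the Riemannian volume form to identify everything with the Lebesgue class. Since $\mu_{\mg}$ is Radon, $\Meas(\mu_\mg)$ is automatically a $\sigma$-algebra and $\Zero(\mu_\mg)$ is a $\sigma$-ideal contained in it, so once we establish $\Meas = \Meas(\mu_\mg)$ and $\Zero = \Zero(\mu_\mg)$, the final assertion of the proposition is immediate.

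The key local observation I would record first is that in any chart $(\psi,U)$, the pushforward $\psi_\ast(\mu_\mg \restrict U)$ has continuous and strictly positive density $\sqrt{\det (\mg_{ij}\comp \psi^{-1})}$ relative to Lebesgue measure on $\psi(U)$. This density is locally bounded above and below away from $0$, so the $\mu_\mg$-measurable sets inside $U$ correspond exactly to the Lebesgue measurable subsets of $\psi(U)$, and likewise for null sets. In particular,
\[
A\cap U \in \Meas(\mu_\mg) \iff \psi(A\cap U)\text{ is Lebesgue measurable},
\qquad
A\cap U\in \Zero(\mu_\mg) \iff \Leb(\psi(A\cap U))=0.
\]

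Next, I would check that the definitions of $\Meas$ and $\Zero$ do not depend on the choice of atlas, so that the universal quantifier ``for all charts'' can be replaced by ``for some countable chart cover''. This uses the standard fact that a smooth diffeomorphism between open subsets of $\R^n$ is locally Lipschitz, hence preserves Lebesgue measurable and Lebesgue null sets (via the change of variables formula, or the elementary Lipschitz-image argument). Using paracompactness/second countability of $\cM$, fix a countable chart cover $\set{(\psi_i,U_i)}_{i\in\Na}$. The atlas independence then gives
\[
A\in \Meas \iff \forall i,\ \psi_i(A\cap U_i)\text{ is Lebesgue measurable},
\]
and an analogous statement for $\Zero$.

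Combining the two previous steps yields the equivalence chain
\[
A\in\Meas \iff \forall i,\ A\cap U_i\in \Meas(\mu_\mg) \iff A = \bigcup_i (A\cap U_i) \in \Meas(\mu_\mg),
\]
and similarly
\[
A\in\Zero \iff \forall i,\ \Leb(\psi_i(A\cap U_i))=0 \iff \forall i,\ \mu_\mg(A\cap U_i)=0 \iff A\in \Zero(\mu_\mg),
\]
where in the last chain I use countable subadditivity to sum the local vanishings. This establishes both equalities, after which $\Meas$ and $\Zero$ inherit the $\sigma$-algebra structure from $\Meas(\mu_\mg)$ and $\Zero(\mu_\mg)$, and the containment $\Zero\subset\Meas$ is tautological from the definitions.

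The one nontrivial technical ingredient, and the main obstacle to keep track of, is the atlas independence step: without the observation that smooth transition maps preserve Lebesgue measurability and Lebesgue null sets, the intrinsic sets $\Meas$ and $\Zero$ could a priori depend on the atlas chosen, and the reduction to the countable cover would fail. Everything else in the argument is bookkeeping around the local identification of $\psi_\ast(\mu_\mg\restrict U)$ with Lebesgue measure up to a continuous strictly positive density.
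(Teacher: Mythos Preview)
Your proposal is correct and follows essentially the same approach as the paper: both arguments rest on the local identification $\mu_\mg\restrict U = \sqrt{\det\mg}\,\psi^\ast\Leb$ with the density continuous and locally bounded away from $0$ and $\infty$, together with a countable cover by (pre)compact charts. The only organizational difference is that you isolate the atlas-independence step explicitly (smooth transition maps preserve Lebesgue measurability and null sets), whereas the paper absorbs this into its converse direction by covering an arbitrary chart with countably many precompact ones; the mathematical content is the same.
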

\begin{proof}
If $A \in \Meas$, on fixing $(\psi, U)$ with $U$ precompact, we have that $\mu_{\mg} = \sqrt{\det \mg}\ \psi^{\ast}\Leb$ inside $U$.
By the precompactness of $U$, $\sqrt{\det \mg}$ is bounded on $U$ by the smoothness of $\mg$ and so it is clear that $A \cap U$ is $\mu_\mg$-measurable.
The set $A$ can then be covered by countable such $A \cap U$ and therefore, $A$ is $\mu_\mg$-measurable. 
Conversely, for $B \in \Meas(\mu_{\mg})$, by covering an arbitrary $(\psi,U)$ by countably many precompact $V$, we can see that $\psi(A \cap U)$ is Lebesgue measurable. The assertion $\Zero = \Zero(\mu_\mg)$ follows by covering by countably many compact charts $\set{U_i}$ and using the fact that there exists $C_i \geq 1$ for each $U_i$ such that $C_i^{-1} \leq  \det \mg(x) \leq C_i$ for all $x \in U_i$. In particular, this yields that $\Meas$ and $\Zero$ are $\sigma$-algebras with $\Zero \subset \Meas$. 
\end{proof}

Instead of defining $\Meas$ and $\Zero$ as we did in Definition~\ref{Def:MeasZero}, we could have first proved this proposition and simply defined it to be the measurable sets and zero measure sets with respect to a background measure induced by a metric. 
However, our approach has the added benefit of emphasising that these notions are independent of any metric and they are actually imported from the differentiable structure of the manifold itself.

It is via  Proposition ~\ref{Prop:MeasZero} that we are able to formulate the notion of a measurable section as in Definition~\ref{Def:MeasSec} over a bundle $\cE \to \cM$.
Recall that  a section $\xi: \cM \to \cE$ is measurable if the coefficients of $\xi$ inside any local trivialisation is measurable. 
Moreover, we denote the measurable sections of $\cE \to \cM$ by $\Lp{0}(\cM;\cE)$.
With this, we can formulate the notion of a locally elliptic compatible measure (LECM) as well as a rough Hermitian metric (RHM) (see Definition~\ref{Def:LECM} and \ref{Def:RHM}).
A useful observation for such metrics is the following. 

\begin{lemma}
\label{Lem:LocComp}
Let $\mh$ be a RHM and $\mh'$ any smooth metric. 
Then, on every compact $K \subset \cM$, there exists a constant $C_K \geq 1$ such that
$$\frac{1}{C_K} \modulus{u}_{\mh'(y)} \leq \modulus{u}_{\mh(y)} \leq C_K \modulus{u}_{\mh'(y)}$$
for $y$-almost-everywhere in $K$ and for all $u \in \cE_y$.
\end{lemma}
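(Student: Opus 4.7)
The plan is to leverage the local comparability structure $(U_\alpha, \mh^\infty_\alpha)$ afforded by the RHM definition, combined with compactness of $K$ and smoothness of $\mh'$, so that everything reduces to finitely many local comparisons between smooth metrics.

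First, I would select a finite subcover. Since $\set{U_\alpha}$ is an open cover of $\cM$ and $K \subset \cM$ is compact, pick finitely many $U_{\alpha_1}, \dots, U_{\alpha_N}$ with $K \subset \bigcup_{i=1}^N U_{\alpha_i}$. By shrinking slightly (taking open sets $V_i \Subset U_{\alpha_i}$ with $\close{V_i}$ compact and $K \subset \bigcup_i V_i$, which exists by standard partition-of-unity-type arguments on a manifold), we can arrange that the comparisons below take place on compact sets inside each $U_{\alpha_i}$. Set $K_i := \close{V_i} \cap K$, which is compact, contained in $U_{\alpha_i}$, and satisfies $K = \bigcup_{i=1}^N K_i$.

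Next, on each $K_i$ I would compare the smooth metrics $\mh^\infty_{\alpha_i}$ and $\mh'$. Both are smooth (hence continuous) Hermitian metrics defined on a neighbourhood of the compact set $K_i$, so the continuous function $x \mapsto \sup_{u \in \cE_x, \modulus{u}_{\mh'(x)} = 1} \modulus{u}_{\mh^\infty_{\alpha_i}(x)}$ is bounded above and below by positive constants on $K_i$. This yields a constant $C_i' \geq 1$ with
\[
\frac{1}{C_i'} \modulus{u}_{\mh'(x)} \leq \modulus{u}_{\mh^\infty_{\alpha_i}(x)} \leq C_i' \modulus{u}_{\mh'(x)} \qquad \text{for all } x \in K_i,\ u \in \cE_x.
\]
Combining this with the defining inequality of the RHM on $U_{\alpha_i}$, there is a null set $Z_i \subset U_{\alpha_i}$ off which
\[
\frac{1}{C_{\alpha_i} C_i'} \modulus{u}_{\mh'(y)} \leq \modulus{u}_{\mh(y)} \leq C_{\alpha_i} C_i' \modulus{u}_{\mh'(y)} \qquad \text{for all } y \in K_i \setminus Z_i,\ u \in \cE_y.
\]

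Finally, I would assemble the local estimates. Set $C_K := \max_{1 \leq i \leq N} C_{\alpha_i} C_i'$ and $Z := \bigcup_{i=1}^N Z_i$. Since $\Zero$ is a $\sigma$-algebra (Proposition~\ref{Prop:MeasZero}), $Z$ is a zero-measure set, and the inequality
\[
\frac{1}{C_K} \modulus{u}_{\mh'(y)} \leq \modulus{u}_{\mh(y)} \leq C_K \modulus{u}_{\mh'(y)}
\]
holds for every $y \in K \setminus Z$ and all $u \in \cE_y$, which is the desired almost-everywhere bound on $K$. The only potential pitfall is the handling of the ``almost-everywhere'' clauses, but since there are only finitely many open sets in play and the intrinsic null sets form a $\sigma$-algebra, this causes no difficulty.
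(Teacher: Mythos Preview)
Your proof is correct and follows essentially the same approach as the paper: extract a finite subcover from the local comparability structures, compare the two smooth metrics $\mh'$ and $\mh^\infty_{\alpha_i}$ on each compact piece, and combine via transitivity with the RHM inequality. Your version is in fact slightly more careful than the paper's, since you explicitly shrink to $V_i \Subset U_{\alpha_i}$ to guarantee the smooth-metric comparison takes place on a compact set contained in $U_{\alpha_i}$ (the paper simply asserts $\mh' \sim \mh_i^\infty$ on $K \cap U_i$ without addressing that $K \cap U_i$ need not have compact closure in $U_i$), and you track the null sets explicitly.
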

\begin{proof}
Fix $\mh'$ a smooth Hermitian metric and $K$ a compact subset of $\cM$.
Cover $K$ by finitely many local compatible structures $(U_1, \mh_{1}^\infty), \cdots , (U_M, \mh_{M}^\infty)$. 
Clearly, since $\mh'$ and $\mh_{i}^\infty$ are smooth, $\mh' \sim \mh_{i}^\infty$ on $K \cap U_i$.
Since by definition, $\mh \sim \mh_{i}^\infty$ on $K \cap U_i$ and $\sim$ is transitive, it is clear that $\mh \sim \mh'$ on $K \cap U_i$.
By compactness of $K$, the conclusion follows. 
\end{proof} 

Next we consider function spaces induced by the measure and geometry.
For this,  let us fix a LECM measure $\mu$ and RHM $\mh$ and recall that we call $(\mu,\mh)$ a LECM-RHM pair.
Note that the measures we consider are fixed so that their measure theory coincides with the intrinsic notions of measure and zero content that we have defined.

To consider the results we present in this paper, we need to ensure that rough differential operators are densely-defined as operators acting on $\Lp{2}(\cM;\cE,\mu,\mh)$ whenever $(\mu,\mh)$ is a LECM-RHM pair.
For a smooth pair $(\mu',\mh')$, this follows readily since a rough differential operator acts smooth functions and in particular, it acts on $\Ck[cc]{\infty}(\cM;\cE)$ which is dense in $\Lp{p}(\cM;\cE,\mu',\mh')$. 
Therefore, it suffices to ensure that this latter density property survives into the more general setting. 
To that end, we begin with the following Lemma.

\begin{lemma}
\label{Lem:RDBound}
For any two LECMs $\mu_1$ and $\mu_2$, we have that 
$$ \mu_1 = \frac{d\mu_1}{d\mu_2} \mu_2.$$
On each compact $K \subset \cM$, $\mu_1$ and $\mu_2$ are mutually bounded on $K$. 
\end{lemma}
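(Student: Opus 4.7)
The plan is to prove the two assertions separately, with the first being an immediate consequence of Radon--Nikodym and the second resting on the LECM structure together with a chart-refinement argument.

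For the first assertion, I would observe that the hypothesis $\Meas(\mu_1) = \Meas(\mu_2) = \Meas$ and $\Zero(\mu_1) = \Zero(\mu_2) = \Zero$ means the two measures have exactly the same null sets, hence are mutually absolutely continuous. Since both $\mu_1$ and $\mu_2$ are Radon measures on a manifold (which is second countable, hence $\sigma$-compact, so both measures are $\sigma$-finite), the Radon--Nikodym theorem yields a measurable density $\tfrac{d\mu_1}{d\mu_2} \in \Lp{0}(\cM; [0,\infty))$ satisfying $\mu_1 = \tfrac{d\mu_1}{d\mu_2}\mu_2$ in the sense of integration against measurable functions.

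For the second assertion, the strategy is to reduce to a finite chart cover on which both measures are comparable to a common pullback Lebesgue measure. Given compact $K \subset \cM$, by Definition~\ref{Def:LECM}, I can cover $K$ by finitely many LECM charts $(U_i,\psi_i)$ adapted to $\mu_1$ with constant $C_i^{(1)}$, and by finitely many LECM charts $(V_j,\varphi_j)$ adapted to $\mu_2$ with constant $C_j^{(2)}$. On each nonempty intersection $U_i \cap V_j$, both pullback Lebesgue measures $\psi_i^\ast(\Leb\restrict\psi_i(U_i))$ and $\varphi_j^\ast(\Leb\restrict\varphi_j(V_j))$ are mutually absolutely continuous with Radon--Nikodym derivative given by the Jacobian determinant $\modulus{\det D(\psi_i \circ \varphi_j^{-1})}$, which is smooth and therefore bounded above and below by positive constants on the compact set $\varphi_j(U_i \cap V_j \cap K)$.

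Composing the three comparisons $\mu_1 \simeq \psi_i^\ast\Leb \simeq \varphi_j^\ast\Leb \simeq \mu_2$ on each $U_i \cap V_j \cap K$ yields a constant $C_{ij}$ with
\[
\frac{1}{C_{ij}} \leq \frac{d\mu_1}{d\mu_2}(x) \leq C_{ij}
\]
for $\mu_2$-a.e. $x \in U_i \cap V_j \cap K$. Taking $C_K := \max_{i,j} C_{ij}$ over the finite collection gives the desired mutual boundedness on $K$. The main obstacle I foresee is simply the bookkeeping in aligning the two a~priori distinct LECM chart covers --- this is handled cleanly by intersecting and using the smoothness of the chart transitions to bridge the two pullback Lebesgue measures on the common domain.
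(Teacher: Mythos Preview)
Your proof is correct and follows essentially the same approach as the paper: mutual absolute continuity from the shared null sets gives the Radon--Nikodym representation, and a finite chart cover of $K$ together with comparison to pullback Lebesgue measures gives the mutual boundedness. The only difference is that the paper asserts one can choose a single chart cover satisfying the LECM condition for \emph{both} $\mu_1$ and $\mu_2$ simultaneously (and then writes $\frac{d\mu_1}{d\mu_2}$ directly as the quotient of the two densities relative to $\psi_i^\ast\Leb$), whereas you take separate covers for each measure and explicitly bridge them via the smooth transition Jacobian $\modulus{\det D(\psi_i \circ \varphi_j^{-1})}$; your version is slightly more explicit about why the paper's common-cover assertion is legitimate, but the underlying argument is the same.
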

\begin{proof} 
Note $\mu_1$ and $\mu_2$ are absolutely continuous with respect to each other since $\Zero(\mu_1) = \Zero = \Zero(\mu_2)$. 
So the expression for $\mu_1$ in terms of $\mu_2$ simply follows from the Radon-Nikodym Theorem. Fix $K$ and cover it by finitely many $(U,\psi)$ charts as in Definition~\ref{Def:LECM}, say $(U_1,\psi_1), \cdots, (U_l, \psi_l)$.
We can choose these charts so that the condition in Definition~\ref{Def:LECM} is satisfied for both $\mu_1$ and $\mu_2$.
Clearly, we can then choose a constant $C_K$ so that 
$$ \frac{1}{C_K} \leq \frac{d\mu_i}{d\psi_i^\ast (\Leb \restrict \psi_i(U_i)) } \leq C_K.$$
Moreover, $\mu_j \restrict U_i \sim \psi_i^{\ast}(\Leb \restrict \psi_i(U_i)$, we can write 
 $$\frac{d\mu_1}{d\mu_2}\rest{U_i} 
= \frac{d\mu_1}{d\psi_i^\ast (\Leb \restrict \psi_i (U_i)) }  \frac{d\psi_i^\ast (\Leb \restrict \psi_i (U_i)) } {d\mu_2}  
= \frac{d\mu_1}{d\psi_i^\ast (\Leb \restrict \psi_i (U_i)) }  \cbrac{\frac {d\mu_2}{d\psi_i^\ast (\Leb \restrict \psi_i (U_i)) }}^{-1},
$$
which is clearly bounded from above. 
This, together with a similar estimate for $\frac{d\mu_2}{d\mu_1}\rest{U_i}$ along with the compactness of $K$, yields the conclusions. 
\end{proof} 

With this, we prove the following important density result. 

\begin{proposition} 
For a LECM-RHM pair $(\mu,\mh)$, the subspace $\Ck[cc]{\infty}(\cM;\cE)$ is dense in $\Lp{p}(\cM;\cE,\mu,\mh)$ for $1 < p < \infty$. 
\end{proposition}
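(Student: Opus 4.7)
The plan is a two-stage approximation: first reduce to sections with compact support in $\interior{\cM}$ by an exhaustion argument, then use local comparability to a smooth reference pair to invoke the classical density result.

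First I would fix a smooth Riemannian metric $\mg'$ on $\cM$ and a smooth Hermitian metric $\mh'$ on $\cE$, so that $(\mu_{\mg'},\mh')$ is a smooth reference LECM-RHM pair for which the classical fact ``$\Ck[cc]{\infty}(\cM;\cE)$ is dense in $\Lp{p}(\cM;\cE,\mu_{\mg'},\mh')$'' is available (via a partition of unity subordinate to coordinate trivialisations together with coordinatewise mollification). Since $\partial\cM \in \Zero = \Zero(\mu)$ (boundary charts map $\partial\cM$ into a hyperplane of Lebesgue measure zero), all $\Lp{p}$ considerations are unaffected by the boundary.

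For the first stage, given $u \in \Lp{p}(\cM;\cE,\mu,\mh)$, I would choose an exhaustion $K_1 \subset \intr{K_2} \subset K_2 \subset \cdots \subset \interior{\cM}$ with $\bigcup_n K_n = \interior{\cM}$ and cutoffs $\chi_n \in \Ck[cc]{\infty}(\cM;[0,1])$ such that $\chi_n = 1$ on $K_n$ and $\spt\chi_n \subset \interior{\cM}$. Because $\mu(\partial\cM)=0$, we have $\chi_n u \to u$ pointwise $\mu$-a.e., and since $|\chi_n u|_{\mh} \leq |u|_{\mh} \in \Lp{p}(\mu)$, Dominated Convergence gives $\chi_n u \to u$ in $\Lp{p}(\cM;\cE,\mu,\mh)$. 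It therefore suffices to approximate an arbitrary $v \in \Lp{p}(\cM;\cE,\mu,\mh)$ with $\spt v \subset K$, where $K$ is compact in $\interior{\cM}$.

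For the second stage, I would apply Lemma~\ref{Lem:LocComp} and Lemma~\ref{Lem:RDBound} on a fixed compact neighbourhood $K'$ of $K$ (still inside $\interior{\cM}$) to obtain a constant $C_{K'} \geq 1$ such that
\[
\frac{1}{C_{K'}^p}\int_{K'}|\xi|_{\mh'}^p\, d\mu_{\mg'} \leq \int_{K'}|\xi|_{\mh}^p\, d\mu \leq C_{K'}^p \int_{K'}|\xi|_{\mh'}^p\, d\mu_{\mg'}
\]
for every measurable section $\xi$ supported in $K'$. In particular $v \in \Lp{p}(\cM;\cE,\mu_{\mg'},\mh')$, so by classical density there exists $v_j \in \Ck[cc]{\infty}(\cM;\cE)$ with $v_j \to v$ in $\Lp{p}(\cM;\cE,\mu_{\mg'},\mh')$. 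To ensure the approximants stay in a fixed compact subset of $\interior{\cM}$, pick $\eta \in \Ck[cc]{\infty}(\cM;[0,1])$ with $\eta \equiv 1$ on $K$ and $\spt\eta \subset \intr{K'}$, and replace $v_j$ by $\eta v_j \in \Ck[cc]{\infty}(\cM;\cE)$; since $\eta v = v$ and $|\eta|\leq 1$, we still have $\eta v_j \to v$ in $\Lp{p}(\cM;\cE,\mu_{\mg'},\mh')$. The norm equivalence on $K'$ then upgrades this to convergence in $\Lp{p}(\cM;\cE,\mu,\mh)$, completing the proof.

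The main obstacle I foresee is purely bookkeeping: the RHM/LECM comparabilities are only \emph{local}, so one must be careful to carry out the smooth-reference approximation entirely within a single compact set on which the constants in Lemma~\ref{Lem:LocComp} and Lemma~\ref{Lem:RDBound} are finite. The truncation-then-enlargement trick above handles this cleanly. The restriction $1<p<\infty$ plays no role in this argument; the same proof would work for $p=1$.
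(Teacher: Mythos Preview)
Your proposal is correct and follows essentially the same two-stage strategy as the paper: truncate by an exhaustion and Dominated Convergence to reduce to compactly supported sections, then use the local comparability of $(\mu,\mh)$ to a smooth reference pair on that compact set (via Lemma~\ref{Lem:LocComp} and Lemma~\ref{Lem:RDBound}) to import the classical density result. The only cosmetic differences are that the paper truncates with indicator functions on precompact open sets rather than smooth cutoffs, and builds its smooth reference locally rather than fixing a global $(\mu_{\mg'},\mh')$ at the outset; your explicit $\eta$-cutoff to keep the approximants inside the fixed compact set is a nice touch that the paper leaves implicit.
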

\begin{proof}
Fix $u \in \Lp{p}(\cM;\cE,\mu,\mh)$ and  write $\cM = \union_{j=1}^\infty U_j$ where $U_j \subset U_{j+1}$ and each $U_j$ is precompact.  
Let $\chi_j$ be the indicator function on $U_j$.
Then, 
 $$ f_j := \modulus{\chi_j u}^p_{\mh} \leq \modulus{u}_{\mh}^p$$
and,  hence, $(f_j)_j$  is an increasing sequence of measurable functions, compactly supported,  with $\modulus{f_j} \leq f := \modulus{u}_{\mh}^p$. The latter which has a finite integrand with respect to $\mu$.  
Clearly, $f_j \to f$ pointwise almost-everywhere.
Therefore, by the dominated convergence theorem, we have that  $ \chi_j u \to u$ in $\Lp{p}(\cM;\cE,\mu,\mh)$.  Given an $\epsilon > 0$,  we fix   $j$ large enough so that $\norm{\chi_j u - u}_{\Lp{p}} \leq \nicefrac{\epsilon}{10}$.
 Since  $\close{U_j}$ is compact and $h$ is a rough metric, then we can find smooth metrics $\mh_j^\infty$ such that $\mh_j^\infty\rest{U_j}  \sim \mh\rest{U_j}$ inside $U_j$. From Lemma~\ref{Lem:RDBound},  we have that $\mu_{\mh_j^\infty} \restrict U_j \sim \mu \restrict U_j$ where  $\mu_{\mh_j^\infty}$ is the smooth measure induced from the smooth metric $\mh_j^\infty$. 
Therefore, $\chi_j u \in \Lp{p}(U_j;\cE,\mu_{\mh_j^\infty}, \mh_j^\infty)$.
Then, there exists $(v_j)_m \to \chi_j u$ in $\Lp{p}(U_j;\cE,\mu_{\mh_j^\infty}, \mh_j^\infty)$ with $(v_j)_m \in \Ck[c]{\infty}(U_j;\cE)$. Moreover, from these bounds, we have a constant $C_j$ such that 
$$ \frac{1}{C_j} \norm{\cdot}_{\Lp{p}(U_j;\cE,\mu, \mh)} \leq \norm{\cdot}_{\Lp{p}(U_j;\cE,\mu_{\mh_j^\infty}, \mh_j^\infty)} \leq C_j \norm{\cdot}_{\Lp{p}(U_j;\cE,\mu, \mh)}.$$
Therefore, let $m$ be sufficiently large so that 
$$\norm{ (v_j)_m  - \chi_j u}_{\Lp{p}(U_j;\cE,\mu_{\mh_j^\infty}, \mh_j^\infty)} \leq \frac{\epsilon}{10 C_j}.$$
Extending $(v_j)_m$ by zero outside of $U_j$, we have that $(v_j)_m \in \Ck[c]{\infty}(\cM;\cE)$ and 
$$ \norm{ (v_j)_m - u}_{\Lp{p}(U_j;\cE,\mu, \mh)} \leq  \norm{ (v_j)_m - \chi_j u}_{\Lp{p}(U_j;\cE,\mu, \mh)} + \norm{u - \chi_j u}_{\Lp{p}(U_j;\cE,\mu, \mh)}
\leq C_j \frac{\epsilon}{10 C_j} + \frac{\epsilon}{10}  < \epsilon.$$
This proves the claim. 
\end{proof}

With this, we have the following important consequence. 
\begin{corollary}
Every rough differential operator $\Dir$ is densely-defined in $\Lp{p}(\cM;\cE, \mu,\mh)$ for $1 \leq p < \infty$ and all LECM-RHM pairs $(\mu,\mh)$. 
\end{corollary}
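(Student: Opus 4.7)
The plan is to use the preceding proposition directly. Viewing $\Dir$ as an unbounded operator on $\Lp{p}(\cM;\cE,\mu,\mh)$ with initial domain $\Ck[cc]{\infty}(\cM;\cE)$, the densely-defined property amounts to showing this initial domain is a dense subspace of $\Lp{p}(\cM;\cE,\mu,\mh)$.

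First, I would verify that $\Ck[cc]{\infty}(\cM;\cE) \subset \Lp{p}(\cM;\cE,\mu,\mh)$ for every $1 \leq p < \infty$. Fix $u \in \Ck[cc]{\infty}(\cM;\cE)$ and set $K := \spt u$, which is compact. Choose any smooth Hermitian metric $\mh'$ on $\cE$ and apply \lemref{Lem:LocComp} on the compact set $K$ to obtain a constant $C_K \geq 1$ such that $\modulus{u}_{\mh}(x) \leq C_K \modulus{u}_{\mh'}(x)$ for $x$-almost-everywhere in $K$. Smoothness of $u$ together with compactness of $K$ bounds $\modulus{u}_{\mh'}$ uniformly on $K$, while $\mu$ being Radon gives $\mu(K) < \infty$. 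Combined with the locality of the support, these yield $\integral{\cM}{\modulus{u}_{\mh}^p}{\mu} < \infty$, so $u \in \Lp{p}(\cM;\cE,\mu,\mh)$.

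Second, by \defnref{Def:RDO}, $\Dir$ is defined on all smooth sections, hence in particular on $\Ck[cc]{\infty}(\cM;\cE)$, which thereby lies inside its natural domain. For $1 < p < \infty$, the preceding proposition immediately yields density of $\Ck[cc]{\infty}(\cM;\cE)$ in $\Lp{p}(\cM;\cE,\mu,\mh)$, so $\Dir$ is densely-defined there. For the endpoint $p = 1$, I would note that every ingredient in the proof of the preceding proposition --- the truncation $\chi_j u$, the dominated convergence step (with dominating function $\modulus{u}_{\mh}^p \in \Lp{1}(\mu)$), the local identification of $\Lp{p}(U_j;\cE,\mu,\mh)$ with $\Lp{p}(U_j;\cE,\mu_{\mh_j^\infty},\mh_j^\infty)$ afforded by \lemref{Lem:RDBound} and the comparability of rough with smooth metrics on precompact charts, and the classical approximation by smooth compactly supported sections --- is insensitive to the value of $p \geq 1$; each works verbatim for $p = 1$, giving the same density conclusion.

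The main obstacle, if it can be called that, is negligible: this corollary is essentially a bookkeeping consequence of the density statement just proved, supplemented only by the trivial verification that compactly supported smooth sections actually lie in $\Lp{p}(\cM;\cE,\mu,\mh)$ and the observation that the preceding proof extends to $p = 1$ without modification.
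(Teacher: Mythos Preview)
Your proposal is correct and follows the same approach as the paper, which treats the corollary as an immediate consequence of the preceding density proposition without giving any explicit proof. You are in fact more careful than the paper: you explicitly verify the inclusion $\Ck[cc]{\infty}(\cM;\cE) \subset \Lp{p}(\cM;\cE,\mu,\mh)$ and address the endpoint $p=1$ (the preceding proposition is only stated for $1<p<\infty$, whereas the corollary claims $1\leq p<\infty$), both of which the paper leaves implicit.
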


\subsection{Mutually bounded pairs}

Our ultimate goal is to compare kernels of operators built out of rough differential operators which records measure and metric information.
We proceed by identifying a large class of metrics that yield an equivalent $\Lp{2}$-theory.
This is captured in the  definition of a mutually bounded pair from Definition~\ref{Def:MutBdd}. To proceed, we first prove the following important change of metric formula.
\begin{proposition}
\label{Prop:MutMat}
Let $\mh_1$ and $\mh_2$ be two RHMs (not necessarily mutually bounded) on $\cE \to \cM$.
Then, there exists $A \in \Lp{0}(\cM;\Sym_{\C}\End(\cE))$ such that $\mh_1(x)[u,v] = \mh_2(x)[A(x)u,v]$ for $x$-a.e. and for all $u,v \in \cE_x$.
If $\mh_1 \sim \mh_2$ with constant $C \geq 1$, then for almost-every $x \in \cM$,  
$$ \frac{1}{C^{2}} \leq \modulus{A(x)}_{\mh_i(x)} \leq C^2.$$
\end{proposition}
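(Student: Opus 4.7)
The plan is to construct $A$ fiberwise by standard linear algebra, upgrade this to a measurable section in local trivialisations, and then extract the operator norm bounds from the self-adjointness of $A$ with respect to \emph{both} metrics.

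First, at each $x \in \cM$, non-degeneracy of the Hermitian form $\mh_2(x)$ on $\cE_x$ ensures that the map $T \mapsto \mh_2(x)[T\,\cdot\,,\,\cdot\,]$ is a linear bijection from $\End(\cE_x)$ onto the space of sesquilinear forms on $\cE_x$. Applying its inverse to the form $\mh_1(x)$ uniquely yields $A(x) \in \End(\cE_x)$ with $\mh_1(x)[u,v] = \mh_2(x)[A(x)u,v]$. Using Hermitian symmetry of $\mh_1$ and $\mh_2$ in succession shows $A(x)$ is $\mh_2(x)$-self-adjoint, and the identity $\mh_2(x)[A(x)u,u] = \mh_1(x)[u,u] > 0$ for $u \neq 0$ gives positivity.

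Next, cover $\cM$ by open sets $U$ trivialising $\cE$ and choose a smooth local frame on each. In such a frame, $\mh_1$ and $\mh_2$ are represented by measurable matrix-valued functions $H_1, H_2$ on $U$ with $H_2$ pointwise positive-definite, and the defining identity translates to the matrix equation $H_1 = H_2 [A]$, so $[A] = H_2^{-1} H_1$. Since matrix inversion is smooth on the open set of invertibles, $H_2^{-1}$ is measurable and hence so is $[A]$, which yields $A \in \Lp{0}(\cM;\Sym_{\C} \End(\cE))$.

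Finally, the hypothesis $\mh_1 \sim \mh_2$ with constant $C$ gives $C^{-2}\mh_1(x)[u,u] \leq \mh_2(x)[u,u] \leq C^2 \mh_1(x)[u,u]$ almost-everywhere; substituting the defining identity converts this into $C^{-2} \mh_2(x)[u,u] \leq \mh_2(x)[A(x)u,u] \leq C^2 \mh_2(x)[u,u]$, so the positive $\mh_2(x)$-self-adjoint operator $A(x)$ has all eigenvalues in $[C^{-2}, C^2]$. A direct computation using $\mh_2$-self-adjointness of $A$ shows $A$ is also $\mh_1$-self-adjoint, and for any self-adjoint operator the operator norm equals the largest eigenvalue in absolute value, which is an intrinsic invariant of $A$ independent of the chosen inner product. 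Since the eigenvalues lie in $[C^{-2}, C^2]$, the largest does too, yielding $C^{-2} \leq \modulus{A(x)}_{\mh_i(x)} \leq C^2$ for $i = 1, 2$.

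The only genuinely subtle point is the observation that $A$ is self-adjoint with respect to both $\mh_1$ and $\mh_2$, which forces the operator norms in the two metrics to agree; without it, the equivalence of metrics would only yield a weaker bound with constant $C^4$. The rest reduces to routine algebra in local frames and standard measurability arguments.
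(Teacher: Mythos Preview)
Your proof is correct and the construction of $A$ mirrors the paper's (pointwise linear algebra on the full-measure set where both metrics are genuine inner products, then measurability via local frames; you should say ``for almost every $x$'' rather than ``at each $x$'', but this is cosmetic). Where you genuinely diverge is in the derivation of the operator-norm bounds. The paper argues via the dual characterisation $\modulus{A(x)u}_{\mh_2} = \sup_{v\neq 0} \modulus{\mh_2[A u,v]}/\modulus{v}_{\mh_2}$ to obtain $C^{-1}\modulus{u}_{\mh_1} \leq \modulus{A(x)u}_{\mh_2} \leq C\modulus{u}_{\mh_1}$, then applies $\mh_1\sim\mh_2$ once more; for the $\mh_1$-bound it reruns the argument with the roles swapped to bound $A^{-1}$, and then transfers back to $A$. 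Your route is cleaner: you read off the eigenvalue bounds $[C^{-2},C^2]$ directly from the quadratic-form inequality, and then exploit the key observation that $A$ is self-adjoint with respect to \emph{both} $\mh_1$ and $\mh_2$, so that $\modulus{A(x)}_{\mh_1} = \modulus{A(x)}_{\mh_2} = \rho(A(x))$, the spectral radius, which is metric-independent. This avoids the second pass through the argument entirely and explains conceptually why the same constant works for both metrics.
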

\begin{proof} 
We begin by first noting that for a finite dimensional complex vector space $V$, if $a$ and $b$ are two inner products, then there exists an invertible,  Hermitian symmetric endomorphism $X_{ab}$ with $a(u,v) = b(X_{ab}u,v)$. 
Fix $R \subset \cM$ to be the intersection of the regular sets for $\mh_1$ and $\mh_2$ (i.e., where $\mh_i(x) \neq \infty$ and $\mh_i(x) \neq 0$ and $\mh_i(x)$ is Hermitian symmetric).
Clearly, $\cM \setminus R$ is of measure zero. 
Now, for $x \in R$, we set $V = \cE_x$ and by the previously mentioned result, we obtain $\mh_1(x)[u,v] = \mh_2(x)[A(x)u,v]$ for all $u,v \in \cE_x$.
Clearly $x \mapsto A(x) \in \Lp{0}(\cM;\Sym_{\C}\End(\cE))$ since $\mh_i \in \Lp{0}(\cM; \Sym_{\C} \cE^\ast \tensor \cE^\ast)$. If $\mh_1 \sim \mh_2$ with constant $C \geq 1$ we have for $u \in \cE_x$ and $v \in \cE_x\setminus\set{0}$
$$ \frac{1}{C} \frac{\modulus{\mh_1(x)[u,v]}}{\modulus{v}_{\mh_1(x)}} \leq \frac{\modulus{\mh_2(x)[A(x)u,v]}}{\modulus{v}_{\mh_2(x)}} \leq C \frac{\modulus{\mh_1(x)[u,v]}}{\modulus{v}_{\mh_1(x)}}$$
since $\mh_1(x)[u,v] = \mh_2(x)[A(x)u,v]$. 
Taking a supremum over $v \neq 0$, we obtain
 $$ \frac{1}{C} \modulus{u}_{\mh_1(x)} \leq \modulus{A(x)u}_{\mh_2(x)} \leq C\modulus{u}_{\mh_1(x)}.$$ 
Again, since $\mh_1 \sim \mh_2$ with $C$, we have the desired claim for $\mh_2$. 
Running the same argument with $\mh_1$ and $\mh_2$ interchanged and instead writing $\mh_2(x)[u,v] = \mh_1(x)[A^{-1}(x)u,v]$, we obtain the desired estimate for $\mh_1$ with $A^{-1}$ in place of $A$. 
However, since it is bounded above and below, it is easy to see that the desired estimate also holds for $A$ with respect to $\mh_1$.
\end{proof}

As the following examples show, these are very natural notions that can arise in a multitude of contexts.
The following illustrates a natural way in which an RHM can arise.

\begin{example}
Let $\mh'$ be a smooth Hermitian metric on $\cE \to \cM$ and  $A \in \Lp{0}(\cM; \End(\cE))$. 
Suppose further that $A \in \Lp[loc]{\infty}(\cM;\End(\cE))$ and that $A^{-1} \in \Lp[loc]{\infty}(\End(\cE))$.
Then, 
$$ \mh_A(x)[u,v] := \mh'(x)[A(x) u, A(x)v]$$ 
defines a RHM.
If $A, A^{-1} \in \Lp{\infty}(\cM; \End(\cE), \mh')$, then $\mh_A$ and $\mh'$ are mutually bounded RHMs.
\end{example} 

In the next example, we see that mutually bounded LECM pairs also arise naturally.

\begin{example}
Let $\mg_1$ and $\mg_2$ be two smooth metrics and suppose there is a constant $C \geq 1$ such that 
 $$ \frac{1}{C} \modulus{u}_{\mg_1(x)} \leq \modulus{u}_{\mg_2(x)} \leq C \modulus{u}_{\mg_1(x)}$$
for all $x \in \cM$ and for all $u \in \tanb_x \cM$.
Then, there exists $A \in \Ck{\infty}(\cM; \Tensors[1,1]\cM)$ such that $\mg_1(u,v) = \mg_2(Au,v)$ and  $\mu_{\mg_1} = \sqrt{ \det A}\ \mu_{\mg_2}.$
The volume measures $\mu_{\mg_1}$ and $\mu_{\mg_2}$ are mutually bounded since 
$$ \frac{1}{C^{\frac n 2}} \leq \sqrt{\det A} \leq C^{\frac n 2}.$$ 
\end{example}

As aforementioned, to compare the kernels of two different operators, we need   at least to know that both operators    can act on a sufficiently large common space. 
In fact, we can do better than that. 
As the following proposition shows, mutually bounded pairs yield the same $\Lp{2}$-theory (up to equivalence).

\begin{proposition}
\label{Prop:LpFix}
Let $(\mu_1,\mh_1)$ and $(\mu_2,\mh_2)$ be two mutually bounded RHM-LECM pairs.
Then: 
\begin{enumerate}[label=(\roman*)] 
\item 
\label{Itm:LpFix:1} 
$\Lp{2}(\cM;\cE) := \Lp{2}(\cM;\cE,\mu_1, \mh_1) = \Lp{2}(\cM;\cE,\mu_1, \mh_1)$ as sets with equivalence of norms ${\norm{\cdot}_{\Lp{2}(\cM;\cE,\mu_1, \mh_1)}\simeq \norm{\cdot}_{\Lp{2}(\cM;\cE,\mu_2, \mh_2)}}.$
\item
\label{Itm:LpFix:2} 
An extension $\Dir_e$ of $\Dir_{\cc}$ closed with respect to $(\mu_1,\mh_1)$ if and only if it is closed with respect to $(\mu_2,\mh_2)$.
\end{enumerate}
\end{proposition}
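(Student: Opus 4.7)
The plan is to establish \ref{Itm:LpFix:1} by direct integral comparison, using Proposition~\ref{Prop:MutMat} to absorb the metric change and Lemma~\ref{Lem:RDBound} to rewrite one measure in terms of the other, and then derive \ref{Itm:LpFix:2} from \ref{Itm:LpFix:1} as an almost immediate consequence about equivalent graph norms.

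For \ref{Itm:LpFix:1}, fix $u \in \Lp{0}(\cM;\cE)$. Since $\mh_1 \sim \mh_2$ with some constant $C \geq 1$, Proposition~\ref{Prop:MutMat} gives $\modulus{u}_{\mh_1(x)}^2 \leq C^2 \modulus{u}_{\mh_2(x)}^2$ for almost-every $x$. Since $\mu_1 \sim \mu_2$, by Lemma~\ref{Lem:RDBound} we can write $\mu_1 = \tfrac{d\mu_1}{d\mu_2}\mu_2$, and by definition of mutual boundedness $\tfrac{d\mu_1}{d\mu_2} \in \Lp{\infty}(\cM)$ (a space which is unambiguous since $\Lp{\infty}$ is independent of the choice of LECM). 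Combining these,
\[
\int_\cM \modulus{u}_{\mh_1}^2\, d\mu_1
= \int_\cM \modulus{u}_{\mh_1}^2\, \frac{d\mu_1}{d\mu_2}\, d\mu_2
\leq C^2 \left\| \frac{d\mu_1}{d\mu_2} \right\|_{\Lp{\infty}(\cM)} \int_\cM \modulus{u}_{\mh_2}^2\, d\mu_2.
\]
The reverse inequality follows by swapping the roles of the two pairs, since mutual boundedness is symmetric. This yields both the set equality $\Lp{2}(\cM;\cE,\mu_1,\mh_1) = \Lp{2}(\cM;\cE,\mu_2,\mh_2)$ and the equivalence of norms $\norm{\cdot}_{\Lp{2}(\cM;\cE,\mu_1,\mh_1)} \simeq \norm{\cdot}_{\Lp{2}(\cM;\cE,\mu_2,\mh_2)}$.

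For \ref{Itm:LpFix:2}, by \ref{Itm:LpFix:1} the domain $\dom(\Dir_e) \subset \Lp{2}(\cM;\cE)$ and the codomain are the same set with equivalent Hilbert norms for $i=1,2$. The graph norms
\[
\norm{u}_{\Dir_e,i}^2 := \norm{u}_{\Lp{2}(\cM;\cE,\mu_i,\mh_i)}^2 + \norm{\Dir_e u}_{\Lp{2}(\cM;\cE,\mu_i,\mh_i)}^2
\]
are therefore mutually equivalent. Closedness of $\Dir_e$ amounts to closedness of its graph $\graph(\Dir_e) \subset \Lp{2}(\cM;\cE) \oplus \Lp{2}(\cM;\cE)$ in the product topology, and equivalent norms induce the same topology; hence closedness with respect to $(\mu_1,\mh_1)$ is equivalent to closedness with respect to $(\mu_2,\mh_2)$.

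I do not anticipate any genuine obstacle here. The only point requiring minor care is noting that $\tfrac{d\mu_1}{d\mu_2}$ is a well defined element of $\Lp{\infty}(\cM)$, which relies on the fact (established in the text just above the statement) that $\Lp{\infty}$ is independent of the underlying LECM because any two LECMs share the same sets of measure and measure zero; the mutual absolute continuity needed for the Radon-Nikodym derivative itself was already verified in the proof of Lemma~\ref{Lem:RDBound}.
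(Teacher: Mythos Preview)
Your proof is correct and follows the same line as the paper, which dispatches both items in two sentences by appealing to the pointwise uniform bounds on the metrics and measures and then noting that \ref{Itm:LpFix:2} is immediate; you have simply spelled out the integrals and the graph-norm argument explicitly. One minor remark: the pointwise estimate $\modulus{u}_{\mh_1(x)} \leq C \modulus{u}_{\mh_2(x)}$ is the very content of Definition~\ref{Def:MutBdd} rather than a consequence of Proposition~\ref{Prop:MutMat}, so that citation is unnecessary, but it does not affect the validity of the argument.
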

\begin{proof}
The almost-everywhere pointwise uniform bounded of $\mh_1$ and $\mh_2$, along with a similar assertion for the measures, establishes \ref{Itm:LpFix:1}.
Clearly, \ref{Itm:LpFix:2} then follows.
\end{proof} 

\begin{remark}
It is important to note that, even though $\Dir_{e}$ remains unchanged between the two norms induced by $(\mu_1,\mh_1)$ and $(\mu_2, \mh_2)$, the adjoint operators $\Dir_{e}^{\ast, (\mu_1,\mh_1)}$ and $\Dir_{e}^{\ast, (\mu_2,\mh_2)}$ are in general distinct. 
They encode underlying measure-geometric information associated with $(\mu_i, \mh_i)$.
\end{remark} 

%\subsection{Proof of Theorem~\ref{Thm:GenHodgeMain}} 

\subsection{Nilpotency and Hodge theory}

Of paramount importance in our considerations is that of a nilpotent operator (c.f. Definition~\ref{Def:Nilpotent}).
Fix $(\mu,\mh)$ a LECMS-RHM pair and $\Dir_e:\dom(\Dir_e) \subset \Lp{2}(\cM;\cE,\mu,\mh) \to \Lp{2}(\cM;\cE,\mu,\mh)$ a closed and nilpotent extension associated to a rough differential operator $\Dir$. 
Since $\Dir_{e}$ is closed, so is $\ker(\Dir_e)$ and therefore, nilpotency self-improves to  $\close{\ran(\Dir_e)} \subset \ker(\Dir_e)$. 
 
In this case, the adjoint $\Dir_e^\ast$ is again nilpotent, but it is not possible to always assert that it is a rough differential operator.
Recall that the domain of the adjoint $\Dir_{e}^\ast$ to $\Dir_e$ is
$$\dom(\Dir_e^\ast) = \set{u \in \Lp{2}(\cM;\cE,\mu,\mh): \modulus{\inprod{\Dir_e v, u}} \lesssim \norm{v}\qquad \forall v \in \dom(\Dir_e)},$$
Therefore, we can see there is no way to assert that $\Ck[c]{\infty}(\cM;\cE) \subset \dom(\Dir_e^\ast)$  for a non-smooth RHM even if $\Dir_e$ itself is a smooth coefficient differential operator.
This is highlighted by an explicit example in Subsection~\ref{Sec:NonsmoothDom}.

Nevertheless, the operator $\DDir_e := \Dir_e + \Dir_e^\ast$ is, densely-defined and self-adjoint.
This is asserted in Proposition~\ref{Prop:DenselyDefined} but  the example in Subsection~\ref{Ex:HodgeMinDomain} concretely illuminates why  we can expect this abstract result to be true.  
More remarkably, this allows an operator-dependent decomposition of the underlying $\Lp{2}$-theory, generalising the famous Hodge-decomposition for the Hodge-Dirac operator. 
We emphasise that these assertions are not our original contributions.
As far as the authors are aware, these results were first established  (in fact, in a more general setup), by Axelsson (Rosén)-Keith-McIntosh in \cite{AKMc}.
Nevertheless, for completeness, we provide a proof of these claims in Appendix~\ref{Appendix}.

\begin{proposition}[Hodge decomposition]
\label{Prop:HodgeDecomp} 
Let $(\mu,\mh)$ be a LECM-RHM pair and $\Dir_e$ be a closed nilpotent extension.
Then $\DDir_{e} := \Dir_e + \Dir_e^\ast$ is self-adjoint and 
$$  \Lp{2}(\cM;\cE,\mu,\mh) = \ker(\Dir_e) \cap \ker(\Dir_e^\ast) \stackrel{\perp}{\oplus} \close{\ran(\Dir_e)} \stackrel{\perp}{\oplus} \close{\ran(\Dir_e^\ast)}.$$
Moreover, $\ker(\DDir_e) = \ker(\Dir_e) \cap \ker(\Dir_e^\ast)$ and $\close{\ran(\DDir_e)} = \close{\ran(\Dir_e)} \stackrel{\perp}{\oplus} \close{\ran(\Dir_e^\ast)}$, where the $\perp$ is with respect to the induced metric $\inprod{\cdot,\cdot}_{(\mu,\mh)}$ from ${(\mu,\mh)}$.
\end{proposition}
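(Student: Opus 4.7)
The plan is to first establish a dual nilpotency for $\Dir_e^\ast$, then extract the three-fold orthogonal decomposition by splicing von Neumann's fundamental identity with the nilpotent structure, and finally install the self-adjointness of $\DDir_e$ through a quadratic form argument. The last step is where the real work lies: the sum of two densely-defined closed operators need not be densely-defined, let alone self-adjoint, and it is precisely the nilpotency hypothesis that makes $\dom(\Dir_e) \cap \dom(\Dir_e^\ast)$ genuinely dense and forces $\DDir_e^2$ to have a clean form.

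First I would verify that $\Dir_e^\ast$ inherits nilpotency. Given $u \in \dom(\Dir_e^\ast)$ and any $v \in \dom(\Dir_e)$, nilpotency places $\Dir_e v \in \ker(\Dir_e) \subset \dom(\Dir_e)$, so the defining identity of the adjoint delivers
\[
\langle \Dir_e^\ast u,\, \Dir_e v\rangle = \langle u,\, \Dir_e^2 v\rangle = 0,
\]
placing $\Dir_e^\ast u$ in $\ran(\Dir_e)^\perp = \ker(\Dir_e^\ast)$. Hence $\ran(\Dir_e^\ast) \subset \ker(\Dir_e^\ast)$, self-improving to $\close{\ran(\Dir_e^\ast)} \subset \ker(\Dir_e^\ast)$ by closedness. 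Von Neumann's theorem then yields the orthogonal splitting $\Hil = \ker(\Dir_e) \stackrel{\perp}{\oplus} \close{\ran(\Dir_e^\ast)}$, while nilpotency of $\Dir_e$ gives $\close{\ran(\Dir_e)} \subset \ker(\Dir_e)$. Since the orthogonal complement of $\close{\ran(\Dir_e)}$ inside $\Hil$ is $\ker(\Dir_e^\ast)$, taking the orthogonal complement within $\ker(\Dir_e)$ produces
\[
\ker(\Dir_e) = \close{\ran(\Dir_e)} \stackrel{\perp}{\oplus} \bigl(\ker(\Dir_e) \cap \ker(\Dir_e^\ast)\bigr),
\]
which, assembled with the von Neumann identity, yields the claimed three-fold orthogonal decomposition.

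For the self-adjointness of $\DDir_e$, I would introduce the symmetric non-negative quadratic form
\[
\mathfrak{q}(u,v) := \langle \Dir_e u,\, \Dir_e v\rangle + \langle \Dir_e^\ast u,\, \Dir_e^\ast v\rangle
\]
on $\dom(\mathfrak{q}) := \dom(\Dir_e) \cap \dom(\Dir_e^\ast)$, which is closed as a sum of the closed forms of $\Dir_e$ and $\Dir_e^\ast$. Denseness of $\dom(\mathfrak{q})$ is the delicate point and is argued componentwise along the decomposition: the $\ker(\Dir_e) \cap \ker(\Dir_e^\ast)$ summand sits trivially in $\dom(\mathfrak{q})$; inside $\close{\ran(\Dir_e)}$ one exploits that the self-adjoint operator $\Dir_e\Dir_e^\ast$ has dense range there and that its range, by nilpotency, lies in $\ker(\Dir_e) \cap \dom(\Dir_e^\ast) \subset \dom(\mathfrak{q})$; the $\close{\ran(\Dir_e^\ast)}$ piece is handled symmetrically using $\Dir_e^\ast \Dir_e$. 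The unique non-negative self-adjoint operator associated to $\mathfrak{q}$ is then identified with $\Dir_e^\ast \Dir_e + \Dir_e \Dir_e^\ast$, which equals $\DDir_e^2$ on a common core precisely because nilpotency annihilates the cross-terms. Since $\DDir_e$ is symmetric, densely-defined and has self-adjoint square, it must itself be self-adjoint.

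The remaining identifications are then cheap. If $\DDir_e u = 0$ then $\Dir_e u \in \close{\ran(\Dir_e)}$ and $\Dir_e^\ast u \in \close{\ran(\Dir_e^\ast)}$ are mutually orthogonal by the established decomposition, so both vanish, giving $\ker(\DDir_e) = \ker(\Dir_e) \cap \ker(\Dir_e^\ast)$. Self-adjointness then delivers $\close{\ran(\DDir_e)} = \ker(\DDir_e)^\perp = \close{\ran(\Dir_e)} \stackrel{\perp}{\oplus} \close{\ran(\Dir_e^\ast)}$. The central obstacle is the density and self-adjointness argument in the form step: it is nilpotency, through both the collapse of $\DDir_e^2$ to $\Dir_e^\ast \Dir_e + \Dir_e \Dir_e^\ast$ and the existence of a rich supply of vectors in the intersection domain, that makes the Hilbert-space machinery run.
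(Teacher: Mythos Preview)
Your argument for the nilpotency of $\Dir_e^\ast$ and the three-fold orthogonal decomposition matches the paper's route in Appendix~\ref{Appendix} (Lemma~\ref{Lem:AbsAdj}, Proposition~\ref{Prop:HodgeDecomp'}). The self-adjointness step, however, takes a genuinely different path: the paper bypasses forms entirely and shows $\dom(\Pi^\ast)\subset\dom(\Pi)$ directly, by taking $u\in\dom(\Pi^\ast)$, picking an arbitrary $w\in\dom(\Gamma^\ast)$, decomposing $w=w_0+w_1+w_2$ along the Hodge splitting, and observing that $\Gamma^\ast w=\Gamma^\ast w_1=\Pi w_1$ so that the $\Pi^\ast$-bound controls $\langle u,\Gamma^\ast w\rangle$; this places $u\in\dom(\Gamma)$, and symmetrically $u\in\dom(\Gamma^\ast)$. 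No squares, no form representation theorem.

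Your form route is viable but two steps are under-justified. First, the density claim that $\ran(\Dir_e\Dir_e^\ast)\subset\dom(\Dir_e^\ast)$ does not follow from nilpotency: nilpotency only gives $\ran(\Dir_e\Dir_e^\ast)\subset\ker(\Dir_e)$, and there is no mechanism forcing $\Dir_e\Dir_e^\ast u$ back into $\dom(\Dir_e^\ast)$. You can salvage this by using $\ran\bigl((\Dir_e\Dir_e^\ast)^2\bigr)$ instead (which \emph{does} land in $\dom(\Dir_e\Dir_e^\ast)\subset\dom(\Dir_e^\ast)$ and is still dense in $\close{\ran(\Dir_e)}$), or more simply by the paper's trick: approximate $v\in\close{\ran(\Dir_e)}$ by any $v_n\in\dom(\Dir_e^\ast)$ and project back to $\close{\ran(\Dir_e)}$ along $\ker(\Dir_e^\ast)$, which preserves membership in $\dom(\Dir_e^\ast)$. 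Second, the implication ``symmetric, densely-defined, closed, with self-adjoint square $\Rightarrow$ self-adjoint'' is correct but not a triviality: one needs that $\DDir_e^2\subset\DDir_e^\ast\DDir_e$ and $\DDir_e^2\subset\DDir_e\DDir_e^\ast$ forces equality (self-adjoint operators have no proper self-adjoint extensions), and then invoke von~Neumann's theorem that $\dom(T^\ast T)$ is a core for $T$ to conclude $\DDir_e$ and $\DDir_e^\ast$ share the core $\dom(\DDir_e^2)$ on which they agree. The paper's direct domain argument avoids both detours.
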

\begin{proof} 
Setting $\Hil = \Lp{2}(\cM;\cE,\mu,\mh)$ with the induced metric, on setting $\Gamma := \Dir_e$, we simply invoke Proposition~\ref{Prop:DenselyDefined}.
\end{proof} 

We now provide the proof of Theorem~\ref{Thm:GenHodgeMain}.

\begin{proof}[Proof of Theorem~\ref{Thm:GenHodgeMain}]
Fix two mutually bounded LECM-RHM pairs $(\mu_1,\mh_1)$ and $(\mu_2,\mh_2)$.
By Proposition~\ref{Prop:LpFix}, we have that the $\Lp{2}(\cM;\cE,\mu_1,\mh_1) = \Lp{2}(\cM;\cE,\mu_2,\mh_2) =: \Lp{2}(\cM;\cE)$ with mutually bounded norms. The proof of \ref{Itm:GenHodgeMain:1} is simply a consequence of Proposition~\ref{Prop:HodgeDecomp}.  To  obtain \ref{Itm:GenHodgeMain:2}, we invoke Lemma~\ref{Lem:AlphaDecomp} with a choice of $\Hil = \Lp{2}(\cM;\cE,\mu_i,\mh_i)$. For the part \ref{Itm:GenHodgeMain:3}, we set $\Hil := \Lp{2}(\cM;\cE)$. 
Note that we have $B \in \Lp{0}(\cM; \Sym_{\C} \cE^\ast \tensor \cE^\ast)$ with $B,B^{-1}: \Lp{2}(\cM;\cE) \to \Lp{2}(\cM;\cE)$ such that 
\[ 
\inprod{u,v}_{(\mu_2,\mh_2)} = \inprod{Bu,v}_{(\mu_1,\mh_1)}.
\]
Therefore we have two inner products related through a bounded invertible $B$.
This satisfies the hypotheses of Theorem~\ref{Thm:AbsHodgeType}, and then 
\ref{Itm:GenHodgeMain:3} is shown. Lastly, invoking Corollary~\ref{Cor:AbsHodgeTypeSplittingInt} yields \ref{Itm:GenHodgeMain:4}.
\end{proof}

\begin{remark}
Note that the isomorphism $B$ is constructed from $A$ satisfying $\mh_2(x)[u,v] = \mh_1(x)[A(x)u,v]$ and the Radon-Nikodym derivative $\frac{d\mu_1}{d\mu_2}$ of the measures $\mu_1$ and $\mu_2$. 
The fact that $B$ is an isomorphism arises from the mutual boundedness $\mh_1 \sim \mh_2$ and $\mu_1 \sim \mu_2$.
Moreover, from Proposition~\ref{Prop:HilAdjChange},  we have  $\DDir_{e,2} = \Dir_{e} + B^{-1} \Dir_{e}^{\ast,(\mu_1, \mh_1)} B.$
\end{remark} 

\subsection{Smooth coefficient first-order differential operators}
\label{Sec:SmoothCoeff}

We begin with the notion of a first-order differential operator in the rough context. 
\begin{definition}
\label{Def:FO} 
A rough differential operator $\Dir$ is said to be first-order if for all $\eta \in \Ck{\infty}(\cM)$ commutator $[\Dir, \eta \ident]$ is a multiplication operator.
\end{definition}

A natural way such operators arise is through the addition of a measurable potential. 
For instance, if $\Dir_0$ is smooth coefficient first-order differential operator and $V \in \Lp{0}(\cM;\End(\cE))$ (i.e., a measurable bundle endomorphism), then the operator $\Dir := \Dir_0 + V$ is a first-order rough differential operator. 
Note that in this case, $[\Dir,\eta \ident] = [\Dir_0, \eta \ident] = \sym_{\Dir}(\extd \eta)$, the (smooth) principal symbol of $\Dir_0$. Despite having made this definition for rough differential operators, in this subsection, we shall be devoted to considering their smooth counterparts.
Recall a rough differential operator is smooth if $\Dir:\Ck{\infty}(\cM;\cE) \to \Ck{\infty}(\cM;\cE)$. 
Throughout, we fix a smooth first-order differential operator $\Dir$.

For a LECM-RHM pair $(\mu,\mh)$, we define the two following operators associated to $\Dir$ by specifying their domains:
\begin{align*} 
\dom(\Dir_{\cc}) &:= \Ck[cc]{\infty}(\cM;\cE), \quad \text{and}\\
\dom(\Dir_{\dd}) &:= \set{ u \in \Ck{\infty} \cap \Lp{2}(\cM;\cE, \mu,\mh): \Dir u \in \Lp{2}(\cM;\cE,\mu,\mh) }.
\end{align*}

\begin{proposition}
\label{Prop:DClosable}
For a LECM-RHM pair $(\mu,\mh)$ over $\cE \to \cM$, the operators $\Dir_{\dd}$ as well as $\Dir_{\cc}$ are closable in $\Lp{2}(\cM;\cE,\mu,\mh)$.
\end{proposition}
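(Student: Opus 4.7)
The plan is to verify the standard closability criterion directly: if $u_n$ lies in the domain of either operator with $u_n \to 0$ in $\Lp{2}(\cM;\cE,\mu,\mh)$ and $\Dir u_n \to v$ in $\Lp{2}(\cM;\cE,\mu,\mh)$, then $v = 0$. Since vanishing almost-everywhere is a local property, I would fix an arbitrary precompact open $U \subset \cM$ and argue $v = 0$ a.e. on $U$.

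On $U$, I would use Lemma~\ref{Lem:LocComp} to produce a smooth Hermitian metric $\mh^\infty$ with $\mh \sim \mh^\infty$ on $U$, and pick any smooth LECM $\mu^\infty$ (for instance, the Riemannian volume measure of a chosen smooth metric on $\cM$), which by Lemma~\ref{Lem:RDBound} is mutually bounded with $\mu$ on $U$. These bounds guarantee that global convergence in $\Lp{2}(\cM;\cE,\mu,\mh)$ implies convergence in $\Lp{2}(U;\cE,\mu^\infty,\mh^\infty)$. Being a smooth first-order differential operator, $\Dir$ admits a smooth formal adjoint $\Dir^{\dagger,\infty}$ on $U$ with respect to the smooth pair $(\mu^\infty,\mh^\infty)$, and so for every $\phi \in \Ck[cc]{\infty}(U;\cE)$ the identity
\[
\int_U \mh^\infty(\Dir u_n, \phi)\, d\mu^\infty = \int_U \mh^\infty(u_n, \Dir^{\dagger,\infty} \phi)\, d\mu^\infty
\]
holds by ordinary integration by parts: the compact support of $\phi$ within $U$ kills the boundary contributions, and $u_n$ is smooth on $U$ (either as an element of $\Ck[cc]{\infty}(\cM;\cE)$ in the $\Dir_{\cc}$ case, or as an element of $\Ck{\infty}(\cM;\cE)$ in the $\Dir_{\dd}$ case).

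Passing to the limit in this identity, the left-hand side converges to $\int_U \mh^\infty(v,\phi)\, d\mu^\infty$ and the right-hand side to $0$, so $v$ is orthogonal to $\Ck[cc]{\infty}(U;\cE)$ in $\Lp{2}(U;\cE,\mu^\infty,\mh^\infty)$; density forces $v = 0$ a.e. on $U$ with respect to $\mu^\infty$, and hence with respect to $\mu$ since $\Zero(\mu) = \Zero = \Zero(\mu^\infty)$. As $U$ is arbitrary, $v \equiv 0$. The main obstacle---though mild---is that one cannot integrate by parts directly under the rough pair $(\mu,\mh)$, since its formal adjoint typically fails to preserve smoothness or compact supports, as illustrated in Subsection~\ref{Sec:NonsmoothDom}. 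Local mutual boundedness with a smooth comparison pair, supplied by Lemma~\ref{Lem:LocComp} together with Lemma~\ref{Lem:RDBound}, is exactly the device that lets us replace the duality calculation by a genuinely smooth one while preserving the $\Lp{2}$-limits.
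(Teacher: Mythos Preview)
Your argument is correct and complete. Both proofs localise and transfer the $\Lp{2}$-convergence to a smooth comparison pair via Lemmas~\ref{Lem:LocComp} and~\ref{Lem:RDBound}, but they diverge at the final step. The paper multiplies $u_n$ by a cutoff $\eta$ supported in a local patch, invokes the first-order commutator identity $\Dir(\eta u_n) = \eta\,\Dir u_n + \sym_{\Dir}(\extd\eta)\,u_n$ to show $\Dir(\eta u_n) \to \eta v$ in the smooth local $\Lp{2}$-space, and then appeals to closedness of the smooth minimal extension on the patch. You instead leave $u_n$ uncut, pair $\Dir u_n$ directly against test functions $\phi \in \Ck[cc]{\infty}(U;\cE)$ via the smooth formal adjoint, and pass to the limit in the duality pairing. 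Your route is marginally more elementary---it bypasses the cutoff and the commutator computation---while the paper's route keeps the argument inside the language of closed extensions and makes the first-order hypothesis visibly do work through the symbol bound. Either way the key input is the same: the rough pair $(\mu,\mh)$ is locally comparable to a smooth one, and closability in the smooth setting is classical.
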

\begin{proof}
Since $\Dir_{\cc} \subset \Dir_{\dd}$, if $\Dir_{\dd}$ is closable, it follows that $\Dir_{\cc}$ is also closable.
Therefore, we only  need to show  that $\Dir_{\dd}$ is closable. Fix $u_n \in \dom(\Dir_{\dd})$ such that $u_n \to 0$ and $\Dir_{\dd}u_n \to v$.
We show that $v = 0$. First note that by Definition~\ref{Def:RHM}, we can assume that there is a cover of $\cM$ by $(U_\alpha, \mh_\alpha)$ for which $U_\alpha$ is a smooth compact domain (i.e., $\close{U_\alpha}$ is a smooth compact manifold with boundary) and for which $U_\alpha$ also coincides with a bounded cover $(U_\alpha,\psi_\alpha)$ for $\mu$ as in Definition~\ref{Def:LECM}.  Therefore, at each $x \in \cM$, we are guaranteed some smooth domain $U_x$ and we can further take $V_x$ open containing $x$ such that $\close{V_x} \subset U_x$.  Take now  $\eta \in \Ck[c]{\infty}(\cM;[0,1])$ such that $\eta = 1$ on $V_x$ and $0$ on an open set containing $\partial U_x$.
By the local comparability condition, we have that $\eta u_n \in \Lp{2}(U_x;\cE,\mh_x^\infty,\psi_x^\ast (\Leb \restrict \psi_x (U_x)))$ and certainly, 
$$\norm{\eta u_n}_{\Lp{2}(U_x; \cE, \mh^\infty_x, \psi_x^\ast (\Leb \restrict \psi_x (U_x)))} 
\lesssim \norm{\eta u_n}_{\Lp{2}(U_x; \cE, \mh, \mu)} 
\lesssim \norm{u_n}_{\Lp{2}(M; \cE, \mh, \mu)} \to 0$$
as $n \to \infty$.
Moreover, 
$$ \Dir (\eta u_n) = \eta \Dir u_n + [\Dir, \eta \ident] u_n$$
and since $\Dir$ is first-order, we have that $[\Dir, \eta\ident] =\sym_{\Dir}(\extd \eta)$ which is bounded since $\eta$ is compactly supported.   Hence  $[\Dir, \eta \ident] u_n \to 0$ and, therefore, $\Dir (\eta u_n) \to \eta v$ 
as $n \to \infty$ inside $\Lp{2}(U_x; \cE, \mh_x^\infty, \psi_x^\ast (\Leb \restrict \psi_x (U_x)))$.
This shows that  $\eta u_n \in \dom(\Dir_{\min, \mh_x^\infty, \psi_x^\ast (\Leb \restrict \psi_x (U_x))})$, the closed minimal extension of $\Dir$ inside $U_x$ with respect to $(\mh_x^\infty,\psi_x^\ast (\Leb \restrict \psi_x (U_x)))$, which are all smooth objects. 
Since $\eta u_n \to 0$ and $\Dir(\eta u_n) \to \eta v$, by the closedness of $\Dir_{\min, \mh_x^\infty, \psi_x^\ast (\Leb \restrict \psi_x (U_x))}$, we have that $\eta v \in \dom(\Dir_{\min, \mh_x^\infty, \psi_x^\ast (\Leb \restrict \psi_x (U_x))})$ and $\eta v = 0$. 
Therefore $\eta v = 0$ almost-everywhere in $U_x$ and since $\eta = 1$ on $V_x$, we have that $v\rest{V_x} = 0$ almost-everywhere in $V_x$. 
Since $\cM$ can be covered by such $V_x$, we have that $v = 0$ almost-everywhere in $\cM$.
\end{proof} 

By the closability of $\Dir_{\cc}$ and $\Dir_{\dd}$ we have just established, we define their closures in the following definition.

\begin{definition}
\label{Def:D0D2}
Given a LECM-RHM pair $(\mu,\mh)$, define 
$$ \Dir_{0,\mu,\mh} := \close{\Dir_{\cc}}\quad\text{and}\quad \Dir_{2,\mu,\mh} := \close{\Dir_{\dd}}.$$
When the context of $(\mu,\mh)$ is fixed and clear, we simply write $\Dir_0$ and $\Dir_2$.
\end{definition}

Note that by construction, we have that $\Dir_0 \subset \Dir_2$.
In the setting that $(\mu,\mh)$ is smooth, it is immediate that $\Dir_0 = \Dir_{\min}$, where $\Dir_{\min}$ is the minimal extension of the operator $\Dir$. 
In that case, we can also consider the maximal extension $\Dir_{\max} = (\Dir^\dagger_{\cc})^\ast$, where $\Dir^\dagger$ is the formal adjoint with respect to a smooth $\mh$.
We consider a general RHM-LECM pair  $(\mh,\mu)$, we may attempt to extract a replacement for $\Dir^\dagger$ by considering $\Dir_0^\ast$.
This is motivated by the smooth setting where it is immediate that $\Dir^\ast_0\rest{\Ck[cc]{\infty}(\cM;\cE)} = \Dir^\dagger_{\cc}$.
A necessary condition in this statement is that $\Ck[cc]{\infty}(\cM;\cE) \subset \dom(\Dir_0^\ast)$.
It is precisely this containment which may fail when we consider the adjoint $\Dir_0^\ast$ with respect to a general, non-smooth,  RHM-LECM pair $(\mu,\mh)$.
See Subsection~\ref{Sec:NonsmoothDom} for an explicit example. 

In the smooth setting, there are many instances where $\Dir_2 = \Dir_{\max}$. 
For instance, assuming that $\Dir$ is first-order elliptic $\cM$ with compact boundary $\dM$, if $\Dir$ is complete (compactly supported sections with support up to the boundary are dense in the maximal domain), then $\Dir_2 = \Dir_{\max}$.
This is the setup considered in \cite{BaerBan} in their analysis of elliptic first-order problems with compact boundary and in \cite{BGS} for general-order elliptic problems on compact manifolds with boundary.

% Nilpotency

Moreover, as the following proposition shows, there are potentially many nilpotent extensions of $\Dir_0$ sitting inside $\Dir_2$, induced by their action on $\Ck{\infty}(\cM;\cE)$. 

\begin{proposition} 
\label{Prop:NilExt}
Let $\Dir$ be a smooth coefficient first-order differential operator. 
Suppose that $\Dir_{e,c}$ is an extension (not necessarily closed) arising from a differential operator $\Dir$ with $\dom(\Dir_{e,c}) \subset \Ck{\infty}(\cM;\cE)$ and that $\ran(\Dir_{e,c}) \subset \dom(\Dir_{e,c})$.
If $\Dir^2 = 0$ on $\Ck{\infty}(\cM;\cE)$, then the closure  $\Dir_{e} := \close{\Dir_{e,c}}$ is nilpotent.
\end{proposition}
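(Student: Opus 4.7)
The plan is to first verify that the pre-closure $\Dir_{e,c}$ is itself nilpotent on its (non-closed) domain, and then bootstrap this to the closure $\Dir_e$ via a standard closed-graph argument that exploits the fact that the image of a Cauchy sequence in the graph norm is again Cauchy in the graph norm with zero second derivative.

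First, I would observe that because $\dom(\Dir_{e,c}) \subset \Ck{\infty}(\cM;\cE)$ and $\Dir_{e,c}$ is obtained by restricting the smooth action of $\Dir$, we have $\Dir_{e,c} u = \Dir u$ for all $u \in \dom(\Dir_{e,c})$. The hypothesis $\ran(\Dir_{e,c}) \subset \dom(\Dir_{e,c}) \subset \Ck{\infty}(\cM;\cE)$ allows us to iterate: for any $u \in \dom(\Dir_{e,c})$, the element $\Dir_{e,c} u$ is smooth and again in $\dom(\Dir_{e,c})$, so
\[
\Dir_{e,c}(\Dir_{e,c} u) = \Dir(\Dir u) = \Dir^2 u = 0,
\]
using the assumption $\Dir^2 = 0$ on $\Ck{\infty}(\cM;\cE)$. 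Thus $\ran(\Dir_{e,c}) \subset \ker(\Dir_{e,c})$.

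Next, I would promote this to $\Dir_e = \close{\Dir_{e,c}}$. Fix $u \in \dom(\Dir_e)$ and pick an approximating sequence $u_n \in \dom(\Dir_{e,c})$ with $u_n \to u$ and $\Dir_{e,c} u_n \to \Dir_e u$ in $\Lp{2}(\cM;\cE,\mu,\mh)$. Set $w_n := \Dir_{e,c} u_n$. By the previous step $w_n \in \dom(\Dir_{e,c})$ and $\Dir_{e,c} w_n = 0$. In particular $w_n \to \Dir_e u$ and $\Dir_{e,c} w_n \to 0$. Applying closedness of $\Dir_e$ yields $\Dir_e u \in \dom(\Dir_e)$ and $\Dir_e(\Dir_e u) = 0$. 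Hence $\ran(\Dir_e) \subset \ker(\Dir_e)$, which is precisely nilpotency in the sense of Definition~\ref{Def:Nilpotent}.

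I do not anticipate a genuine obstacle here: the proof is essentially a two-line closed-graph argument once one notices that $(w_n, \Dir_{e,c} w_n) = (\Dir_{e,c} u_n, 0)$ is automatically a Cauchy sequence in the graph of $\Dir_{e,c}$. The only subtlety worth making explicit is the very first identification $\Dir_{e,c} u = \Dir u$, which uses that $\Dir_{e,c}$ is an extension of the differential operator $\Dir$ on smooth sections (as encoded in Definition~\ref{Def:RDO}) together with the containment $\dom(\Dir_{e,c}) \subset \Ck{\infty}(\cM;\cE)$; without this, one could not apply the pointwise identity $\Dir^2 = 0$.
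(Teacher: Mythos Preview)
Your proof is correct and follows essentially the same approach as the paper: both first observe that $\Dir_{e,c}$ is nilpotent on its smooth domain via $\Dir^2 = 0$, and then pass to the closure by taking an approximating sequence $u_n$, setting $w_n = \Dir_{e,c} u_n$, and invoking closedness of $\Dir_e$ on the pair $(w_n, 0)$. The only cosmetic difference is that you isolate the nilpotency of $\Dir_{e,c}$ as a preliminary step before the closure argument, whereas the paper interleaves the two.
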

\begin{proof}
Fix $v := \Dir_e u$.
By definition, there exists $u_n \in \dom(\Dir_{e,c})$ with $u_n \to u$ and $\Dir_{e,c} u_n \to \Dir_{e,c} u$.
Let $v_n := \Dir u_n = \Dir_{e,c} u_n$, since $\dom(\Dir_{e,c}) \subset \Ck{\infty}(\cM;\cE) = \dom(\Dir)$, where $\Dir$ is the associated (rough) differential operator to $\Dir_e$.
Since $\ran(\Dir_{e,c}) \subset \dom(\Dir_{e,c})$, we have that $v_n \in \dom(\Dir_{e,c})$.
Also, $\dom(\Dir_{e,c}) \subset \Ck{\infty}(\cM;\cE)$ and $\Dir$ is smooth coefficient, $v_n \in \Ck{\infty}(\cM;\cE)$.
Therefore, $v_n \to v$ and $0 = \Dir_{e} v_n$ by the fact that $\Dir^2 = 0$ on $\Ck{\infty}(\cM;\cE)$. 
So $\Dir_{e} v_n \to 0$.
Together, since $\Dir_{e}$ is closed, we have that $v \in \dom(\Dir_{e})$ and $\Dir_{e} v = 0$. 
This proves that $\ran(\Dir_e) \subset \ker(\Dir_e)$.
\end{proof}

An important corollary is the following, which shows that the two extreme ends of the scales of operators that play the role of the minimal and maximal extension are nilpotent.
In particular, this shows that there are always nilpotent extensions for smooth coefficient nilpotent differential operators. 
\begin{corollary}
\label{Cor:MaxMinNil}
If $\Dir^2 = 0$, the extensions $\Dir_2$ and $\Dir_0$ are both nilpotent.
\end{corollary}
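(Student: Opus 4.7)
The plan is to reduce the corollary directly to Proposition~\ref{Prop:NilExt} by taking $\Dir_{e,c} = \Dir_{\cc}$ in one case and $\Dir_{e,c} = \Dir_{\dd}$ in the other. In both cases the domain is already contained in $\Ck{\infty}(\cM;\cE)$ by definition, so the only nontrivial hypothesis left to check is the range containment $\ran(\Dir_{e,c}) \subset \dom(\Dir_{e,c})$.

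For $\Dir_{\cc}$, I would argue as follows. Let $u \in \Ck[cc]{\infty}(\cM;\cE)$. Since $\Dir$ is smooth-coefficient, $\Dir u \in \Ck{\infty}(\cM;\cE)$; since $\Dir$ is local (as a rough differential operator), $\spt(\Dir u) \subset \spt u$, which is compact and contained in the interior of $\cM$. Hence $\Dir u \in \Ck[cc]{\infty}(\cM;\cE) = \dom(\Dir_{\cc})$, so $\ran(\Dir_{\cc}) \subset \dom(\Dir_{\cc})$. Proposition~\ref{Prop:NilExt} then yields that $\Dir_0 = \close{\Dir_{\cc}}$ is nilpotent.

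For $\Dir_{\dd}$, let $u \in \dom(\Dir_{\dd})$, so $u \in \Ck{\infty}(\cM;\cE) \cap \Lp{2}(\cM;\cE,\mu,\mh)$ with $\Dir u \in \Lp{2}(\cM;\cE,\mu,\mh)$. Smoothness of the coefficients gives $\Dir u \in \Ck{\infty}(\cM;\cE)$, and the hypothesis $\Dir^2 = 0$ on smooth sections gives $\Dir(\Dir u) = 0 \in \Lp{2}(\cM;\cE,\mu,\mh)$. Thus $\Dir u \in \dom(\Dir_{\dd})$, establishing $\ran(\Dir_{\dd}) \subset \dom(\Dir_{\dd})$. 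Applying Proposition~\ref{Prop:NilExt} once more yields that $\Dir_2 = \close{\Dir_{\dd}}$ is nilpotent.

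There is no real obstacle here — the corollary is essentially a verification that the two canonical extensions fit into the abstract framework of Proposition~\ref{Prop:NilExt}. The only subtlety worth flagging is ensuring the locality of $\Dir$ is used to preserve compact support away from $\partial \cM$ in the $\Dir_{\cc}$ case, which is what makes $\Dir u$ land back in $\Ck[cc]{\infty}$ rather than merely in $\Ck[c]{\infty}$.
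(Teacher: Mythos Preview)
Your proof is correct and follows essentially the same approach as the paper: verify that $\ran(\Dir_{\cc}) \subset \dom(\Dir_{\cc})$ via locality and smoothness of coefficients, verify that $\ran(\Dir_{\dd}) \subset \dom(\Dir_{\dd})$ via $\Dir^2 = 0$, and then invoke Proposition~\ref{Prop:NilExt} in each case. The paper's proof is slightly more terse but the logic is identical.
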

\begin{proof}
If $u \in \dom(\Dir_{\cc})$, then $u \in \Ck[cc]{\infty}(\cM;\cE)$ and by the locality of $\Dir$, we have that $\Dir u \in \Ck[cc]{\infty}(\cM;\cE)$.
Then the nilpotency of $\Dir_0$ follows. If $u \in \dom(\Dir_{\dd})$, then $u \in \Ck{\infty}(\cM;\cE) \cap \Lp{2}(\cM;\cE,\mu,\mh)$ and $Du \in \Ck{\infty}(\cM;\cE) \cap \Lp{2}(\cM;\cE,\mu,\mh)$.
Letting $v := Du$, we have that $\Dir v = \Dir^2 u = 0 \in \Ck{\infty}(\cM;\cE) \cap \Lp{2}(\cM;\cE,\mu,\mh)$. 
Therefore, $v \in \dom(\Dir_{\dd})$.
This shows that $\ran(\Dir_{\dd}) \subset \dom(\Dir_{\dd})$.
\end{proof}

Combining these, we obtain the following alternative characterisation of nilpotent extensions. 
\begin{corollary}
\label{Cor:NilExt0}
If $\Dir^2=0$, then an extension $\Dir_{e}$ of $\Dir$ is nilpotent if and only if $\ran(\Dir_{e}) \subset \dom(\Dir_{e})$.
\end{corollary}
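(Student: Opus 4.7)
The plan is to treat the two implications separately. The forward direction is immediate: if $\Dir_e$ is nilpotent then by Definition~\ref{Def:Nilpotent} we have $\ran(\Dir_e)\subset\ker(\Dir_e)$, and since $\ker(\Dir_e)\subset\dom(\Dir_e)$ holds trivially, the containment $\ran(\Dir_e)\subset\dom(\Dir_e)$ follows. Notably, this half uses neither $\Dir^2=0$ nor the rough geometric data.

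For the reverse implication I would adapt the approximation scheme used in the proof of Proposition~\ref{Prop:NilExt}. Fix $u\in\dom(\Dir_e)$; under the hypothesis $\ran(\Dir_e)\subset\dom(\Dir_e)$ the composition $\Dir_e(\Dir_e u)$ is well-defined, and the task is to show it vanishes. Writing $\Dir_e=\close{\Dir_{e,c}}$ for some $\Dir_{e,c}$ with $\dom(\Dir_{e,c})\subset\Ck{\infty}(\cM;\cE)$ — a structure present for every extension appearing in the paper, including $\Dir_0$, $\Dir_2$, and the $\extd_{\theta}$ of Subsection~\ref{Sec:Nonnil} — I would choose $u_n\in\dom(\Dir_{e,c})$ with $u_n\to u$ and $\Dir u_n=\Dir_{e,c}u_n\to \Dir_e u$. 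Setting $v_n:=\Dir u_n\in\Ck{\infty}(\cM;\cE)$, the smoothness of the coefficients of $\Dir$ and the identity $\Dir^2=0$ force $\Dir v_n=0$ in the classical sense. Since $v_n\in\ran(\Dir_e)\subset\dom(\Dir_e)$ by hypothesis, and since $\Dir_e$ acts as the classical $\Dir$ on smooth members of its domain, we obtain $\Dir_e v_n=\Dir v_n=0$ for every $n$. Closedness of $\Dir_e$ applied to $v_n\to\Dir_e u$ with $\Dir_e v_n\equiv 0$ then delivers $\Dir_e(\Dir_e u)=0$, i.e.\ nilpotency.

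The principal technical step — and the only place where one has to look beyond formal manipulation — is the identification $\Dir_e w=\Dir w$ for smooth $w\in\dom(\Dir_e)$. A priori, Definition~\ref{Def:RDO} only asks that $\Dir_e$ agree with $\Dir$ on $\Ck[cc]{\infty}(\cM;\cE)$. However, any closed $\Lp{2}$-extension of $\Dir\rest{\Ck[cc]{\infty}(\cM;\cE)}$ sits inside the maximal extension $(\Dir^\dagger\rest{\Ck[cc]{\infty}(\cM;\cE)})^\ast$, which by definition acts on smooth elements of its domain exactly as the classical $\Dir$. This justifies the required identification, after which the rest of the proof is a direct transcription of Proposition~\ref{Prop:NilExt} and closes.
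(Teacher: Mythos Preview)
Your forward direction matches the paper. For the converse, the paper's argument is a two-line application of Corollary~\ref{Cor:MaxMinNil}: since $\Dir_2$ is nilpotent and (implicitly) $\Dir_e\subset\Dir_2$, one has $\ran(\Dir_e)\subset\ran(\Dir_2)\subset\ker(\Dir_2)$, and then $\ran(\Dir_e)\subset\dom(\Dir_e)\cap\ker(\Dir_2)=\ker(\Dir_e)$. No approximation is needed; the work was already done in proving $\Dir_2$ nilpotent.

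Your route instead re-runs an approximation argument in the style of Proposition~\ref{Prop:NilExt}, and this introduces two issues. First, you impose the extra structural assumption $\Dir_e=\close{\Dir_{e,c}}$ with $\dom(\Dir_{e,c})\subset\Ck{\infty}(\cM;\cE)$, which is not part of the statement. Second, and more seriously, your justification of the ``principal technical step'' $\Dir_e w=\Dir w$ for smooth $w$ appeals to the maximal extension $(\Dir^\dagger\rest{\Ck[cc]{\infty}})^\ast$; but the paper explicitly points out (in the paragraph following Definition~\ref{Def:D0D2}) that in the rough LECM-RHM setting a formal adjoint $\Dir^\dagger$ need not exist, so there is no maximal extension to appeal to. What you actually need for that step is exactly the containment $\Dir_e\subset\Dir_2$ --- and once you grant that, the approximation is redundant and the paper's direct argument applies. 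So your approach is morally on track but both longer and resting on a justification that does not survive the generality of the section.
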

\begin{proof}
Suppose that $\Dir_{e}$ is nilpotent.
Then $\ran(\Dir_{e}) \subset \ker(\Dir_{e}) \subset \dom(\Dir_{e})$. For the converse statement, note that since $\Dir^2 = 0$, by Corollary~\ref{Cor:MaxMinNil}, we have that $\ran(\Dir_{e}) \subset \ker(\Dir_{2})$.
But since $\ran(\Dir_{e}) \subset \dom(\Dir_{e})$, we have that $\ran(\Dir_{e}) \subset \dom(\Dir_{e}) \cap \ker(\Dir_{2}) = \ker(\Dir_{e})$.
\end{proof}

On manifolds with boundary, we have the following nilpotency result for ``mixed'' boundary conditions.

\begin{corollary} 
Let $\cM$ be a manifold with boundary and $V \subset \dM$ open.
Define 
$$ \dom(\Dir_{V}^0) := \set{u \in \Ck{\infty} \cap \Lp{2}(\cM;\cE,\mu,\mh): \Dir u \in \Lp{2}(\cM;\cE,\mu,\mh),\quad u\rest{V} = 0,\quad (\Dir u)\rest{V} = 0}$$
(where for $V =\emptyset$, we take $\Dir_{\emptyset}^0 = \Dir_{\dd}$).
If $\Dir^2 = 0$ on $\Ck{\infty}(\cM;\cE)$, then $\Dir_{V} := \close{\Dir_{V}^0}$ is nilpotent. 
\end{corollary}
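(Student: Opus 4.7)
The plan is to reduce the claim to a direct application of Proposition~\ref{Prop:NilExt} by verifying its hypotheses for the unclosed extension $\Dir_V^0$. First observe that $\Dir_V^0 \subset \Dir_{\dd}$ by construction, and so closability of $\Dir_V^0$ in $\Lp{2}(\cM;\cE,\mu,\mh)$ is inherited from Proposition~\ref{Prop:DClosable}. In particular, $\Dir_V := \close{\Dir_V^0}$ is a well defined closed extension of $\Dir$. Moreover, by the very definition of $\dom(\Dir_V^0)$, we have $\dom(\Dir_V^0) \subset \Ck{\infty}(\cM;\cE)$, which is the first hypothesis of Proposition~\ref{Prop:NilExt}.

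The key remaining step is to verify the range condition $\ran(\Dir_V^0) \subset \dom(\Dir_V^0)$. Fix $u \in \dom(\Dir_V^0)$ and set $v := \Dir u$. Since $\Dir$ has smooth coefficients and $u \in \Ck{\infty}(\cM;\cE)$, we have $v \in \Ck{\infty}(\cM;\cE)$, and by hypothesis $v = \Dir u \in \Lp{2}(\cM;\cE,\mu,\mh)$. Next, $\Dir v = \Dir^2 u = 0$ on $\Ck{\infty}(\cM;\cE)$ by the assumption $\Dir^2 = 0$, so in particular $\Dir v \in \Lp{2}(\cM;\cE,\mu,\mh)$. The boundary conditions are also satisfied: $v\rest{V} = (\Dir u)\rest{V} = 0$ by the definition of $\dom(\Dir_V^0)$, and $(\Dir v)\rest{V} = 0$ since $\Dir v \equiv 0$. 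Therefore $v \in \dom(\Dir_V^0)$, establishing $\ran(\Dir_V^0) \subset \dom(\Dir_V^0)$.

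Having checked both hypotheses, Proposition~\ref{Prop:NilExt} applies and yields that $\Dir_V = \close{\Dir_V^0}$ is nilpotent. The case $V = \emptyset$ is consistent with this argument since then $\Dir_{\emptyset}^0 = \Dir_{\dd}$, whose closure $\Dir_2$ is already known to be nilpotent by Corollary~\ref{Cor:MaxMinNil}. No genuine obstacle arises here; the only point requiring attention is the verification that the boundary trace conditions on $V$ survive under the action of $\Dir$, which is automatic because $\Dir^2 = 0$ forces the second trace $(\Dir v)\rest{V}$ to vanish globally, not merely on $V$.
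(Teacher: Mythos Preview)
Your proof is correct and follows essentially the same approach as the paper: verify that $\ran(\Dir_V^0) \subset \dom(\Dir_V^0)$ by checking that $v := \Dir u$ satisfies all the defining conditions (smoothness, $\Lp{2}$-membership of $v$ and $\Dir v$, and the two vanishing trace conditions on $V$), and then invoke Proposition~\ref{Prop:NilExt}. The paper's proof is slightly terser, leaving the closability remark and the explicit invocation of Proposition~\ref{Prop:NilExt} implicit, but the substance is identical.
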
 
\begin{proof}
Let $u \in \dom(\Dir_{V}^0)$ so that $u \rest{V} = 0$ and $(\Dir u)\rest{V} = 0$.
Setting $v := \Dir u$, we have that $v \rest{V} = (\Dir u)\rest{V} = 0$ and by the fact that $\dom(\Dir_{V}^0) \subset \Ck{\infty}(\cM;\cE)$, we have that $\Dir v = \Dir^2 u = 0$.
Therefore, $\ran(\Dir_{V}^0) \subset \dom(\Dir_{V}^0)$. 
\end{proof}  

\section{Rough Riemannian Metrics and the Hodge-Dirac operator}
\label{Sec:RRMHodge}

We consider an important application of the general results in Section~\ref{Sec:SetupResults} and Section~\ref{Sec:GH} to a natural geometric setup by considering the case of the the Hodge-Dirac operator in non-smooth situations.
For that, we recall the following definition from \cite{BRough}.

\begin{definition}[Rough Riemannian Metric on $\cM$]
\label{Def:RRM} 
We say that $\mg \in \Lp{0}(\Sym_{\R} \cotanb\cM \tensor \cotanb\cM)$ is a \emph{Rough Riemannian Metric (RRM)}  if for each $x \in \cM$, there exists a chart $(U,\psi)$ with $x \in U$ and a constant $C(U) \geq 1$ such that $$ \frac{1}{C(U)} \modulus{\psi_\ast (y) u}_{\R^n} \leq \modulus{u}_{g(y)} \leq C(U) \modulus{\psi_\ast (y) u}_{\R^n}$$ for $y$-almost-everywhere in $U$ and for all $u \in \tanb_y \cM$.
The structure $(U,\psi)$ is called a local comparability chart. 
\end{definition}

First, we verify that this definition coincides with  $\mg$ being a RHM on $\tanb\cM$ but with real coefficients.
\begin{proposition} 
\label{Prop:RRM} 
The following are equivalent: 
\begin{enumerate}[label=(\roman*)]
\item 
\label{Itm:RRM:1}
$\mg$ is real RHM on $\tanb\cM$.
\item 
\label{Itm:RRM:2} 
$\mg$ is a RRM.
\end{enumerate} 
\end{proposition}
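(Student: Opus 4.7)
The plan is to show that both definitions express the same local comparability property, just using different reference metrics: a RHM uses an arbitrary background smooth metric, while a RRM uses the pullback of the Euclidean metric via a chart. Since on any precompact region these two types of reference metric are smoothly mutually bounded, the two notions must coincide.

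For the implication \ref{Itm:RRM:2} $\Rightarrow$ \ref{Itm:RRM:1}, I would proceed as follows. Suppose $\mg$ is a RRM. For each $x \in \cM$, Definition~\ref{Def:RRM} gives a chart $(U, \psi)$ with constant $C(U)$ comparing $\mg$ to the Euclidean pullback. I would then set $\mh^{\infty}_{U} := \psi^{\ast}\inprod{\cdot,\cdot}_{\R^n}$, which is a smooth Riemannian (hence real Hermitian) metric on $U$, and observe that the RRM bound is literally the statement $C(U)^{-1}|u|_{\mh^{\infty}_U} \leq |u|_{\mg} \leq C(U)|u|_{\mh^{\infty}_U}$ almost-everywhere on $U$. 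Extracting a countable subcover from the resulting charts (using second countability of $\cM$) yields the local comparability structure required by Definition~\ref{Def:RHM}, showing $\mg$ is a real RHM on $\tanb\cM$.

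For the converse \ref{Itm:RRM:1} $\Rightarrow$ \ref{Itm:RRM:2}, let $\mg$ be a real RHM with cover $\{(U_\alpha, \mh^{\infty}_\alpha)\}$. Fix $x \in \cM$ and pick $\alpha$ with $x \in U_\alpha$. Choose a chart $(V, \psi)$ with $x \in V$ and $\close{V} \subset U_\alpha$ compact. On $\close{V}$, the two smooth Riemannian metrics $\mh^{\infty}_\alpha$ and $\psi^\ast\inprod{\cdot,\cdot}_{\R^n}$ are both continuous and positive-definite, and so by a standard compactness argument there is a constant $K \geq 1$ with $K^{-1}|\psi_\ast(y)u|_{\R^n} \leq |u|_{\mh^{\infty}_\alpha(y)} \leq K|\psi_\ast(y)u|_{\R^n}$ for all $y \in V$ and $u \in \tanb_y\cM$. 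Composing with the RHM bound $C_\alpha^{-1}|u|_{\mh^\infty_\alpha} \leq |u|_\mg \leq C_\alpha|u|_{\mh^\infty_\alpha}$ almost-everywhere on $U_\alpha$ gives, almost-everywhere on $V$, the comparability needed with constant $C(V) := C_\alpha K$. Thus $(V, \psi)$ is a local comparability chart at $x$ as in Definition~\ref{Def:RRM}.

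The measurability preamble needs only to be checked once: in both conditions $\mg$ is declared to lie in $\Lp{0}(\cM; \Sym_{\R}\cotanb\cM \tensor \cotanb\cM)$, so there is no loss on that front. There is no substantive obstacle here; the only care needed is the standard one of passing between a general smooth reference metric and a chart-induced Euclidean one on a precompact set, which is just uniform equivalence of continuous positive-definite quadratic forms over a compact parameter set.
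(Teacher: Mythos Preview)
Your proof is correct and follows essentially the same approach as the paper: in both directions you identify the pullback Euclidean metric $\psi^\ast\inprod{\cdot,\cdot}_{\R^n}$ as the bridge between the two definitions, and for \ref{Itm:RRM:1} $\Rightarrow$ \ref{Itm:RRM:2} you use compactness to compare the smooth reference metric $\mh^\infty_\alpha$ with the chart-induced Euclidean one. The only superfluous step is your extraction of a countable subcover in \ref{Itm:RRM:2} $\Rightarrow$ \ref{Itm:RRM:1}; Definition~\ref{Def:RHM} does not require the cover to be countable, so the charts $(U,\psi^\ast\inprod{\cdot,\cdot}_{\R^n})$ already form a valid local comparability structure as they stand.
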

\begin{proof} 
To prove \ref{Itm:RRM:1} implies \ref{Itm:RRM:2}, for a fixed $x \in \cM$, we let $(U_\alpha, \mh_{\alpha}^\infty)$ be the local comparability structure with $x \in U_\alpha$. 
Now, let $(V,\psi)$ be a chart around $x$ and without the loss of generality, assume $V \subset U_\alpha$.
Inside $V$, let $U$ be an open set with $\close{U} \subset V$ and $\close{U}$ compact.
For all $y \in U$, we have a constant $C \geq 1$ with ${C}^{-1} \modulus{\psi_\ast(y) u}_{\R^n} \leq \modulus{u}_{\mh_\alpha^\infty(y)} \leq {C} \modulus{\psi_\ast(y) u}_{\R^n}$ 
for all $y \in U$.
It is clear that \ref{Itm:RRM:2} follows from using the comparability condition in Definition~\ref{Def:RHM}. To prove \ref{Itm:RRM:2} yields \ref{Itm:RRM:1}, we simply note that for each local comparable chart $(U,\psi)$, the metric $\mh_{U}^\infty = \psi^\ast \inprod{\cdot,\cdot}_{\R^n}$, is smooth. 
Therefore, $(U,\mh_{U}^\infty)$ is the local comparability structure and this shows that $\mg$ is real RHM. 
\end{proof}

Moreover, inside two overlapping locally comparable charts $(U,\phi)$ and $(V,\psi)$, it is readily verified that
$$ \int_{\phi(A)} \sqrt{\det \mg \comp \phi^{-1}(x})\ d\Leb(x) = \int_{\psi(A)} \sqrt{\det \mg \comp \psi^{-1}(x})\ d\Leb(x)$$ 
whenever $A \subset U \cap V$ is measurable (c.f. Proposition~4 in \cite{BRough}).
This allows us to consistently extend the usual definition of the volume measure for the smooth case to the rough case as follows.

\begin{definition}[Induced volume measure]
\label{Def:VolMeas}
If $\mg$ is an RRM on $\cM$, then writing
\[ 
d\mu_{\mg}(x) = \sqrt{\det \mg(x)}\ \psi^{\ast}\Leb(x)
\]
inside a local comparability chart $(U,\psi)$ with $\psi^{\ast}\Leb$ is the pullback of the Lebesgue measure in $U$, is the \emph{volume measure} induced by $\mg$.
\end{definition}

In order to ensure we can apply the general results we have obtained thus far, we prove the following.

\begin{proposition} 
\label{Prop:VolMeas}
The induced volume measure $\mu_{\mg}$ from a RRM $\mg$ is a LECM.
\end{proposition}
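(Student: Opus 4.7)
The plan is to verify each of the three requirements in Definition~\ref{Def:LECM} in turn: (i) $\mu_{\mg}$ is a Radon measure, (ii) $\Meas(\mu_{\mg}) = \Meas$ and $\Zero(\mu_{\mg}) = \Zero$, and (iii) the local elliptic Radon--Nikodym comparability against a pullback Lebesgue measure. The key geometric input is that inside a local comparability chart $(U,\psi)$ for the RRM $\mg$, the coefficient matrix $G := (\psi^{-1})^\ast \mg$ is uniformly comparable to the identity matrix in the sense of symmetric matrices, which controls $\sqrt{\det \mg}$ both above and below.

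First, I would fix an arbitrary local comparability chart $(U,\psi)$ for $\mg$ with constant $C = C(U) \geq 1$. Translating the pointwise bound in Definition~\ref{Def:RRM} into a statement about the coefficient matrix gives $C^{-2} I \leq G(y) \leq C^{2} I$ for almost every $y \in U$, and thus $C^{-2n} \leq \det G(y) \leq C^{2n}$, which yields $C^{-n} \leq \sqrt{\det \mg(y)} \leq C^n$ almost-everywhere in $U$. Consequently, the local expression $d\mu_{\mg} = \sqrt{\det \mg}\ \psi^\ast \Leb$ from Definition~\ref{Def:VolMeas} identifies the Radon--Nikodym derivative with respect to $\psi^\ast(\Leb \restrict \psi(U))$ as $\sqrt{\det \mg}$, which satisfies exactly the local elliptic bound demanded by Definition~\ref{Def:LECM}. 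This simultaneously shows that $\mu_{\mg}\restrict U$ is a finite-on-compacts Borel regular measure, since $\psi^\ast(\Leb \restrict \psi(U))$ is Radon and multiplication by a bounded nonnegative density preserves this property; the consistency of these local descriptions under chart overlaps (already referenced via \cite{BRough}) lets me glue them into a globally well defined Radon measure on $\cM$.

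Next, I would check the measure-algebra identities. Since $\sqrt{\det \mg}$ is bounded away from both $0$ and $\infty$ on $U$, the measure $\mu_{\mg}\restrict U$ is mutually absolutely continuous with $\psi^\ast(\Leb \restrict \psi(U))$, so a subset $A \subset U$ is $\mu_{\mg}$-measurable (respectively, $\mu_{\mg}$-null) if and only if $\psi(A)$ is Lebesgue measurable (respectively, Lebesgue null). Covering $\cM$ by countably many such charts and invoking the $\sigma$-algebra structure of $\Meas$ and $\Zero$ established in Proposition~\ref{Prop:MeasZero} then gives $\Meas(\mu_{\mg}) = \Meas$ and $\Zero(\mu_{\mg}) = \Zero$.

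The main obstacle, if there is one, is subtle rather than technical: one must be careful that the pointwise comparability in Definition~\ref{Def:RRM} genuinely upgrades to the matrix-level spectral bound $C^{-2} I \leq G \leq C^{2} I$ (rather than merely an inequality of quadratic forms that might lose a factor), so that the determinant estimate is sharp in both directions. Once that linear-algebraic step is recorded, every other piece of the argument is a routine assembly of the local estimates via a countable cover by local comparability charts.
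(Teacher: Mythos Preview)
Your argument is correct and complete. The approach differs from the paper's: you verify all three conditions of Definition~\ref{Def:LECM} directly from the determinant bound $C^{-n}\leq\sqrt{\det\mg}\leq C^n$ on local comparability charts, obtaining Radon-ness by noting that a bounded positive density times a Radon measure is Radon, and deducing the measure-algebra identities from the mutual absolute continuity this implies. The paper instead cites \cite{BRough} for the identities $\Meas(\mu_{\mg})=\Meas$, $\Zero(\mu_{\mg})=\Zero$ and for finiteness on compacts, and then proves Radon-ness indirectly via the Riesz Representation Theorem applied to the positive linear functional $f\mapsto\int_{\cM}f\,d\mu_{\mg}$ on $\Ck[c]{0}(\cM)$; the local elliptic Radon--Nikodym bound is not spelled out there, being implicit in Definition~\ref{Def:VolMeas}. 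Your route is more elementary and self-contained, while the paper's has the minor advantage of not needing to discuss the gluing of local Radon measures explicitly. Your stated ``main obstacle'' is not an obstacle at all: the quadratic-form inequality in Definition~\ref{Def:RRM} is exactly the spectral bound $C^{-2}I\leq G\leq C^{2}I$, so the determinant estimate is immediate.
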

\begin{proof}
Proposition 5 in \cite{BRough} provides us with $\Meas(\mu_{\mg}) = \Meas$ and $\Zero(\mu_{\mg}) = \Zero$ and Proposition 6 in \cite{BRough} yields $\mu_{\mg}$ is Borel and finite on compact sets.
Define the functional $L_{\mg}: \Ck[c]{0}(\cM) \to \R$ by 
$$ L_{\mg}(f) = \int_{\cM} f\ d\mu_{\mg}.$$
Clearly, this is a functional since $\mu_{\mg}$ is Borel and finite on compact sets.
Fix a compact $K \subset \cM$ and let $f \in \Ck[c]{0}(\cM)$ with $\modulus{f} \leq 1$ and $\spt f \subset K$.
Then, 
$$ L_{\mg}(f) \leq \modulus{L_{\mg}(f)} = \modulus{\int_{K} f\ d\mu_{\mg}} \leq \int_{K} \modulus{f}\ d\mu \leq \mu_{\mg}(K).$$
That is, 
$$ \sup \set{L_{\mg}(f): f \in \Ck[c]{0}(\cM),\ \modulus{f} \leq 1, \spt f \subset K} \leq \mu_{\mg}(K) < \infty.$$
This allows us to invoke the Riesz Representation Theorem, Theorem 4.1 in Chapter 1 of \cite{Simon} to obtain a Radon measure $\mu$ such that 
$$ L_{\mg}(f) = \int_{\cM} f\ d\mu$$
for all $f \in \Ck[c]{0}(\cM)$ since $L_{\mg}(f) \geq 0$ for $f \geq 0$ (c.f. Remark 4.3 in Chapter 1 of \cite{Simon}).
By the density of $\Ck[c]{0}(\cM)$ in $\Lp{p}(\cM;\mu)$, we have that $\mu = \mu_{\mg}$.
\end{proof} 

\begin{corollary} 
Denoting the induced metric from $\mg$ on the complexified bundle $\Forms \cM \to \cM$ again by $\mg$, we have that $(\mg,\mu_{\mg})$ is an RHM-LECM pair.
\end{corollary}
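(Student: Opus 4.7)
The plan is to assemble the statement from the two pieces already in hand. By Proposition~\ref{Prop:VolMeas}, the measure $\mu_{\mg}$ is an LECM, so nothing further is needed on the measure side. It remains to verify that the induced metric on the complexified bundle $\Forms\cM \to \cM$ is an RHM in the sense of Definition~\ref{Def:RHM}.

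The forward step is to exploit the local comparability of $\mg$ as a real RHM on $\tanb\cM$ (Proposition~\ref{Prop:RRM}) and to propagate this comparability through the functorial construction that builds $\Forms\cM$ out of $\tanb\cM$. Concretely, I would fix $x \in \cM$ and a local comparability chart $(U,\psi)$ for $\mg$, and let $\mg^\infty_U := \psi^\ast \inprod{\cdot,\cdot}_{\R^n}$ be the associated smooth Riemannian metric on $U$. The inequality
\[
\tfrac{1}{C(U)} \modulus{u}_{\mg^\infty_U(y)} \leq \modulus{u}_{\mg(y)} \leq C(U) \modulus{u}_{\mg^\infty_U(y)}
\]
for a.e.\ $y\in U$ and all $u \in \tanb_y\cM$ is precisely what we need to start from.

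The main technical observation, which I would prove by a short linear algebra lemma, is that if two inner products $\mh_1,\mh_2$ on a finite-dimensional real vector space $V$ satisfy $C^{-1}\modulus{\cdot}_{\mh_1} \leq \modulus{\cdot}_{\mh_2} \leq C\modulus{\cdot}_{\mh_1}$, then the induced inner products on $\Forms[k] V^\ast$ satisfy the same comparability with constant $C^k$. This follows by decomposing on a $\mh_1$-orthonormal basis, passing to the induced wedge basis, and estimating eigenvalues of the change-of-basis Gram matrix; taking the maximum over $k = 0, \dots, n$ yields a constant $C(U)^n$ that controls the comparability on all of $\Forms\cM$ (viewed as the direct sum $\bigoplus_k \Forms[k]\cM$ with its canonical orthogonal decomposition). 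Applied pointwise a.e.\ on $U$, this upgrades the comparability of $\mg$ and $\mg^\infty_U$ on $\tanb\cM$ to comparability of the induced metrics on $\Forms\cM$.

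Complexification is then routine: extending a real inner product $\mh$ on $V$ to the Hermitian inner product on $V \otimes_{\R} \C$ in the standard way preserves the norm of real vectors and multiplies the norm of complex combinations linearly, so any real comparability constant transfers verbatim to the complexified bundle. Thus $(U,\mg^\infty_U)$ furnishes a local comparability structure for the complexified induced metric on $\Forms\cM$ in the sense of Definition~\ref{Def:RHM}, covering $\cM$ as $x$ ranges over $\cM$. Combined with Proposition~\ref{Prop:VolMeas}, this gives the RHM-LECM pair $(\mg,\mu_{\mg})$. The only nontrivial point is the exterior-power comparability lemma, and this is a bounded linear algebra computation rather than a genuine obstacle.
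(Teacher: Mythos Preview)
Your proposal is correct and is exactly the natural argument. In the paper this corollary is stated without proof, as an immediate consequence of Proposition~\ref{Prop:VolMeas} (for the LECM part) and the functoriality of the exterior-power and complexification constructions applied to the local comparability structures from Proposition~\ref{Prop:RRM}; you have simply spelled out the linear-algebra step that the paper leaves implicit.
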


Recall $\extd: \Ck{\infty}(\Forms[k] \cM) \to \Ck{\infty}(\Forms[k+1]\cM)$, the exterior derivative on $k$ forms.
Furthermore, $\extd: \Ck{\infty}(\Forms[k] \cM) \to \Ck{\infty}(\Forms[k+1]\cM)$ and it satisfies $\extd^2 = 0$ on $\Ck{\infty}(\Forms\cM)$.
Moreover, the a RRM $\mg$ canonically extends, to the complex bundle $\Forms[k] \cM$ and hence $\Forms\cM$.
Therefore, we have the following theorem as a consequence of Theorem~\ref{Thm:GenHodgeMain}.

\begin{theorem}[Hodge-type theorem for the Hodge-Dirac operator]
\label{Thm:GenHodgeDiracMain}
If $\mg_1$ and $\mg_2$ are two rough metrics that are mutually bounded, then the pairs $(\mu_{\mg_1}, \mg_1)$ and $(\mu_{\mg_2},\mg_2)$ are also mutually bounded  and yield equivalent $\Lp{2}$-spaces which are equal as sets.
Let  $\extd_e$ be a closed and nilpotent extension of $\extd_{\cc}$, which remains fixed with respect to $(\mu_{\mg_i}, \mg_i)$.
Define  the Hodge-Dirac operator with respect to the metric $\mg_i$ by $\DDir_{e,i,\Hodge} := \extd_e + \extd_e^{\ast, \mh_i}$.
Then: 
\begin{enumerate}[label=(\roman*)]
\item 
\label{Itm:GenHodgeDiracMain:1}
The operator $\DDir_{e,i,\Hodge}$ is self-adjoint with respect to $\inprod{\cdot,\cdot}_{(\mg_i)}$ and in particular, it is densely-defined and closed. 
More generally, $\DDir_{e,i,\Hodge}$ is bi-sectorial with respect to either norm.
\item 
\label{Itm:GenHodgeDiracMain:2}
For all $k,l,k',l' \in \Na\setminus\set{0}$, it holds that $\ker(\DDir_{e,i,\Hodge}^k) = \ker(\DDir_{e,i,\Hodge})$ and $\close{\ran(\DDir_{e,i,\Hodge}^l}= \close{\ran(\DDir_{e,i,\Hodge})}$, and moreover,
\[
\Lp{2}(\cM;\cE) 
= \ker(\DDir_{e,1,\Hodge}^k) \oplus \close{\ran(\DDir_{e,1,\Hodge}^l)} 
= \ker(\DDir_{e,2,\Hodge}^{k'}) \oplus \close{\ran(\DDir_{e,1,\Hodge}^{l'})},
\]
where the splitting is not in general orthogonal.
\item 
\label{Itm:GenHodgeDiracMain:3}
We have that  $\ker(\DDir_{e,1,\Hodge}) \cong \ker(\DDir_{e,2,\Hodge})$ where the isomorphism is given by 
$$ \Phi :=  \Proj{\ker(\DDir_{e,1,\Hodge}), \close{\ran(\DDir_{e,1,\Hodge})}}\rest{\ker(\DDir_{e,2,\Hodge})}: \ker(\DDir_{e,2,\Hodge}) \to \ker(\DDir_{e,1,\Hodge}).$$
The inverse is then given by
$$ \Phi^{-1} :=  \Proj{\ker(\DDir_{e,2,\Hodge}), \close{\ran(\DDir_{e,1,\Hodge})}}\rest{\ker(\DDir_{e,1,\Hodge})}: \ker(\DDir_{e,1,\Hodge}) \to \ker(\DDir_{e,2,\Hodge}).$$
\item 
\label{Itm:GenHodgeDiracMain:4}
We have that $\ker(\DDir_{e,1,\Hodge}\rest{\Forms[k]\cM}) \cong \ker(\DDir_{e,2,\Hodge}\rest{\Forms[k]\cM})$, where the isomorphisms is given by
$$ \Phi_k :=  \Proj{\ker(\DDir_{e,1,\Hodge}), \close{\ran(\DDir_{e,1,\Hodge})}}\rest{\ker(\DDir_{e,2,\Hodge}\rest{\Forms[k]\cM})}: \ker(\DDir_{e,2,\Hodge}\rest{\Forms[k]\cM} ) \to \ker(\DDir_{e,1,\Hodge}\rest{\Forms[k]\cM} ).$$
\end{enumerate}
\end{theorem}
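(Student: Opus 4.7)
The plan is to reduce everything to Theorem~\ref{Thm:GenHodgeMain} applied with $\cE = \Forms\cM$ (complexified) and $\Dir = \extd$, and to verify its hypotheses one by one. First, I would establish that mutual boundedness of $\mg_1$ and $\mg_2$ on $\tanb\cM$ propagates to mutual boundedness of the pairs $(\mu_{\mg_1},\mg_1)$ and $(\mu_{\mg_2},\mg_2)$ as RHM-LECM pairs on the full form bundle $\Forms\cM$. The metric side follows from the functorial extension of $\mg_i$ to $\Forms[k]\cM$: if $\mg_1 \sim \mg_2$ with constant $C$ on $\tanb\cM$, then the induced metrics on $\Forms[k]\cM$ are comparable with constant $C^k$, so bounded uniformly over $k = 0,\dots,n$. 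For the measure side, Proposition~\ref{Prop:VolMeas} shows each $\mu_{\mg_i}$ is an LECM, and inside any local comparability chart the Radon-Nikodym derivative $d\mu_{\mg_1}/d\mu_{\mg_2} = \sqrt{\det \mg_1 / \det\mg_2}$ is controlled pointwise almost-everywhere using the matrix change-of-metric identity (Proposition~\ref{Prop:MutMat}); mutual boundedness of $\mg_1 \sim \mg_2$ forces the eigenvalues of the change-of-metric endomorphism to lie in a fixed compact subinterval of $(0,\infty)$, hence $\sqrt{\det(\cdot)}$ is bounded above and below. By Proposition~\ref{Prop:LpFix}, this already yields that $\Lp{2}(\cM;\Forms\cM,\mu_{\mg_1},\mg_1) = \Lp{2}(\cM;\Forms\cM,\mu_{\mg_2},\mg_2)$ as sets with equivalent norms, and that $\extd_e$ being closed and nilpotent is independent of which of the two inner products is chosen.

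Next I would confirm that $\extd$ is a rough differential operator in the sense of Definition~\ref{Def:RDO} (it is local and acts on smooth compactly supported sections), so that Theorem~\ref{Thm:GenHodgeMain} applies verbatim. Parts \ref{Itm:GenHodgeDiracMain:1}, \ref{Itm:GenHodgeDiracMain:2} and \ref{Itm:GenHodgeDiracMain:3} of the present statement are then immediate transcriptions of the corresponding parts of Theorem~\ref{Thm:GenHodgeMain} with $\Dir_e = \extd_e$ and $\DDir_{e,i,\Hodge} = \extd_e + \extd_e^{\ast,\mg_i}$.

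The only genuine verification is the commutation hypothesis for part \ref{Itm:GenHodgeDiracMain:4}, that is, the splitting $\Forms\cM = \bigoplus_{k=0}^n \Forms[k]\cM$ must satisfy $[\DDir_{e,i,\Hodge}^m, P_k] = 0$ for some $m \in \Na$ and for every $k$, where $P_k$ is the projection onto $\Forms[k]\cM$ along the complementary degrees. I take $m=2$. Expanding,
\[
\DDir_{e,i,\Hodge}^{\,2} = \extd_e^2 + \extd_e \extd_e^{\ast,\mg_i} + \extd_e^{\ast,\mg_i}\extd_e + (\extd_e^{\ast,\mg_i})^2,
\]
and by nilpotency of $\extd_e$ and of its adjoint (Proposition~\ref{Prop:HodgeDecomp} and the abstract argument behind it) the first and last terms vanish, leaving $\DDir_{e,i,\Hodge}^{\,2} = \extd_e \extd_e^{\ast,\mg_i} + \extd_e^{\ast,\mg_i}\extd_e$. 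To conclude, I must check that $\extd_e$ sends $\dom(\extd_e) \cap \Forms[k]\cM$ into $\Forms[k+1]\cM$ (and symmetrically for the adjoint), so that both summands preserve form degree and hence commute with each $P_k$. This is the step I expect to require the most care: the grading property is evident on $\Ck[cc]{\infty}$ sections, and the projections $P_k$ are bounded on $\Lp{2}(\cM;\Forms\cM)$ because the splitting is orthogonal in any local comparability structure; but one needs to push the grading through the closure process defining $\extd_e$. I would do this by observing that $P_k$ commutes with $\extd_{\cc}$ on smooth compactly supported sections, so if $u_n \to u$ and $\extd u_n \to f$ in $\Lp{2}$ with $u_n \in \dom(\extd_{\cc})$, then $P_k u_n \to P_k u$ and $\extd(P_k u_n) = P_{k+1}(\extd u_n) \to P_{k+1} f$; the same argument extends to any closed nilpotent extension sandwiched between $\extd_0$ and $\extd_2$, provided the extension is itself stable under the $P_k$'s, which one should include (or verify) as part of the standing assumption on $\extd_e$. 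With grading in hand, $\DDir_{e,i,\Hodge}^2$ commutes with $P_k$, Theorem~\ref{Thm:GenHodgeMain}\ref{Itm:GenHodgeMain:4} applies, and part \ref{Itm:GenHodgeDiracMain:4} of the present theorem follows with the stated projection formula, completing the proof.
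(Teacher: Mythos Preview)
Your proposal is correct and follows the same route as the paper: establish mutual boundedness of the pairs $(\mu_{\mg_i},\mg_i)$ via the change-of-metric endomorphism and its determinant bound, then invoke Theorem~\ref{Thm:GenHodgeMain} with $\Dir_e = \extd_e$. The paper's proof stops there and leaves the commutation hypothesis for part~\ref{Itm:GenHodgeDiracMain:4} implicit; your explicit verification via $\DDir_{e,i,\Hodge}^2$ preserving form degree, together with your caveat that grading-stability of a general $\extd_e$ must be assumed or checked, is a legitimate refinement the paper does not spell out.
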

\begin{proof}
For $\mg_1$ and $\mg_2$ rough, we have an $A \in \Lp{0}(\cM;\Sym_{\R} \cotanb \cM \tensor \tanb \cM)$ such that $\mg_1(x)[u,v] = \mg_2(x)[A(x)u,v]$ by invoking Proposition~\ref{Prop:RRM} followed by Proposition~\ref{Prop:MutMat}.
Since $\mg_1 \sim \mg_2$, Proposition~\ref{Prop:MutMat} further yields $A, A^{-1} \in \Lp{\infty}(\cM;\cotanb\cM \tensor \tanb\cM, \mh_i)$. 
If the mutual boundedness constant is $C \geq 1$ for $\mu_1 \sim \mu_2$, the measures then satisfy $d\mu_{\mg_1}(x) = \sqrt{\det A(x)}\ d\mu_{\mg_2}(x)$ with 
$$ \frac{1}{C^{\frac2 n}} \leq \sqrt{\det A} = \frac{d\mu_{\mg_1}}{d\mu_{\mg_2}} \leq {C^{\frac2 n}}.$$
Clearly, as the metrics $\mg_i$ extend to the exterior bundle, so does the endomorphisms $A$ with bounds again dependent on $C$. 
Therefore, $\mu_{\mg_1} \sim \mu_{\mg_2}$ so that $(\mu_{\mg_1},\mg_1) \sim (\mu_{\mg_2},\mg_2)$.
Setting $\Dir_{e} = \extd_e$, we invoke Theorem~\ref{Thm:GenHodgeMain} and we obtain the desired conclusions.  
\end{proof}

As an immediate consequence, we obtain the following corollary since $\extd$ is a smooth coefficient first-order differential operator. 

\begin{corollary} 
\label{Cor:HodgeDiracThmMinMaxExt}
The conclusions of Theorem~\ref{Thm:GenHodgeMain} hold for the extensions $\extd_0$ and $\extd_2$, with the Hodge-Dirac operators being $\DDir_{0, i} := \extd_0 + \extd^{0,\mg_i}$, and $\DDir_{2, i} := \extd_2 + \extd^{2,\mg_i}$ respectively. 
\end{corollary}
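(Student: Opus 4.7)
The plan is to verify that the extensions $\extd_0$ and $\extd_2$ satisfy the hypotheses of Theorem~\ref{Thm:GenHodgeDiracMain} for the two mutually bounded pairs $(\mu_{\mg_1},\mg_1)$ and $(\mu_{\mg_2},\mg_2)$, and then invoke that theorem directly. The three items to check are: (a) both $\extd_0$ and $\extd_2$ are closed extensions of $\extd_{\cc}$; (b) they coincide as operators when computed with either of the two pairs (i.e.\ they ``remain fixed''); and (c) they are nilpotent.

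For (a), closability in $\Lp{2}(\cM;\Forms\cM,\mu_{\mg_i},\mg_i)$ follows directly from Proposition~\ref{Prop:DClosable} applied to the smooth first-order operator $\extd$. This yields $\extd_0$ and $\extd_2$ as closed operators in each of the two $\Lp{2}$-spaces. For (b), recall from Proposition~\ref{Prop:LpFix} that the two $\Lp{2}$-spaces are equal as sets with equivalent norms. Therefore $\Ck[cc]{\infty}(\cM;\Forms\cM)$ is the same as a subspace of either Hilbert space, and $\dom(\extd_{\dd})$ (defined intrinsically via smoothness plus the condition that $\extd u$ lies in $\Lp{2}$) is likewise the same set in both cases since $\Lp{2}$-membership coincides. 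Equivalence of norms implies that a Cauchy sequence in the graph norm with respect to $(\mu_{\mg_1},\mg_1)$ is Cauchy in the graph norm with respect to $(\mu_{\mg_2},\mg_2)$ and vice versa, with the same limit. Hence the closures $\extd_0$ and $\extd_2$ are identical as operators independent of which pair is used. For (c), since $\extd^2 = 0$ on $\Ck{\infty}(\cM;\Forms\cM)$, Corollary~\ref{Cor:MaxMinNil} directly gives nilpotency of both $\extd_0$ and $\extd_2$ in either of the two mutually bounded Hilbert space structures.

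With the hypotheses verified, Theorem~\ref{Thm:GenHodgeDiracMain} applied with $\extd_e = \extd_0$ yields parts \ref{Itm:GenHodgeDiracMain:1}--\ref{Itm:GenHodgeDiracMain:3} of the conclusion for $\DDir_{0,i}$, and likewise for $\DDir_{2,i}$ on taking $\extd_e = \extd_2$. For part \ref{Itm:GenHodgeDiracMain:4}, the splitting $\Forms\cM = \bigoplus_{k=0}^n \Forms[k]\cM$ together with the commutation hypothesis of Theorem~\ref{Thm:GenHodgeMain}~\ref{Itm:GenHodgeMain:4} must be checked: since $\extd_e$ raises form degree by one and its adjoint $\extd_e^{\ast,\mg_i}$ lowers it by one, the square
\[
\DDir_{e,i,\Hodge}^2 \;=\; \extd_e \extd_e^{\ast,\mg_i} + \extd_e^{\ast,\mg_i} \extd_e
\]
preserves form degree. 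Consequently $\DDir_{e,i,\Hodge}^2$ commutes with the bounded projection onto any $\Forms[k]\cM$, so the commutation hypothesis holds with the exponent $k=2$.

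The only mildly subtle step is (b): one must be careful that the \emph{closures} of $\extd_{\cc}$ and $\extd_{\dd}$ truly do not depend on which of the two mutually bounded pairs is used to define the ambient norm. This is where Proposition~\ref{Prop:LpFix} is crucial, as it simultaneously ensures that the pre-closure domains are set-theoretically the same and that the two graph norms are equivalent. Once this independence is in hand, the rest of the proof is a direct application of Theorem~\ref{Thm:GenHodgeDiracMain} combined with the elementary degree argument for \ref{Itm:GenHodgeDiracMain:4}.
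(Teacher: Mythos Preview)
Your proposal is correct and follows the same approach as the paper: note that $\extd$ is a smooth first-order differential operator, invoke Corollary~\ref{Cor:MaxMinNil} for nilpotency of $\extd_0$ and $\extd_2$, and then apply the main theorem. You are simply more thorough than the paper's three-line argument, explicitly spelling out closability (Proposition~\ref{Prop:DClosable}), independence of the closures from the choice of pair (Proposition~\ref{Prop:LpFix}), and the degree-preservation of $\DDir_{e,i,\Hodge}^2$ needed for part~\ref{Itm:GenHodgeDiracMain:4}; the paper leaves these implicit and routes the conclusion through Theorem~\ref{Thm:GenHodgeMain} directly rather than the specialised Theorem~\ref{Thm:GenHodgeDiracMain}.
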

\begin{proof}
Clearly, $\extd$ is a smooth coefficient first-order differential operator. 
Then, by Corollary~\ref{Cor:MaxMinNil}, we obtain that $\extd_0$ and $\extd_2$ are both nilpotent.
The conclusions of the corollary follow simply from Theorem~\ref{Thm:GenHodgeMain}.
\end{proof} 

When the underlying manifold $\cM$ is compact and without boundary, we have the following important generalisation of the Hodge-theorem.
Recall that the $k$-th (smooth) de-Rham cohomology is given by 
$$\Hom[k+1]_{\dR}(\cM) := \faktor{\ker(\extd\rest{\Ck{\infty}(\cM; \Forms[k+1]\cM)})} {\ran(\extd\rest{\Ck{\infty}(\cM;\Forms[k]\cM)})}.$$

\begin{theorem}[Hodge-Dirac theorem for rough metrics]
\label{Thm:HodgeDiracThm}
Let $\mg$ be a RRMs on a compact manifold $\cM$ with $\dM = \emptyset$.
Then, there is a unique extension $\eextd = \extd_0$, the Hodge-Dirac operator $\DDir_{\mg, \Hodge} := \eextd + \eextd^{\ast,\mg}$ is self-adjoint and $\Hom[k]_{\dR}(\cM) \cong  \ker(\DDir_{\mg,\Hodge}\rest{\Forms[k]\cM}).$
\end{theorem}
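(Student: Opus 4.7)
The plan is to reduce the rough metric case to the classical smooth Hodge theorem by invoking Theorem~\ref{Thm:GenHodgeDiracMain} as a bridge. Since $\cM$ is compact and without boundary, every smooth section is automatically compactly supported in the interior, so $\Ck[cc]{\infty}(\cM;\Forms\cM)=\Ck{\infty}(\cM;\Forms\cM)$. Because $\cM$ is compact, $\Ck{\infty}(\cM;\Forms\cM)\subset \Lp{2}(\cM;\Forms\cM,\mu_\mg,\mg)$, and $\extd u$ is smooth on all of $\cM$ for any smooth $u$, hence also lies in $\Lp{2}$. Thus $\dom(\extd_{\cc})=\dom(\extd_{\dd})=\Ck{\infty}(\cM;\Forms\cM)$, and by taking closures we conclude $\extd_0=\extd_2$. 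Consequently, the notation $\eextd:=\extd_0$ is unambiguous and there is a unique relevant closed extension.

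The next step is to observe that $\eextd$ is nilpotent. Since $\extd$ is a smooth-coefficient first-order differential operator with $\extd^2=0$ on $\Ck{\infty}(\cM;\Forms\cM)$, Corollary~\ref{Cor:MaxMinNil} directly yields that $\extd_0$ is nilpotent. The self-adjointness of $\DDir_{\mg,\Hodge}=\eextd+\eextd^{\ast,\mg}$ then follows immediately from Theorem~\ref{Thm:GenHodgeDiracMain}\ref{Itm:GenHodgeDiracMain:1}.

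For the cohomological identification, I would introduce an auxiliary smooth Riemannian metric $\mh$ on $\cM$, which exists because $\cM$ is a smooth manifold. I first need that $\mg$ and $\mh$ are mutually bounded in the sense of Definition~\ref{Def:UniComp}. This follows from compactness: covering $\cM$ by finitely many local comparability charts $(U_\alpha,\psi_\alpha)$ for $\mg$, the metric $\mg$ is comparable on each $U_\alpha$ to the pullback Euclidean metric, which in turn is comparable to $\mh$ on $\close{U_\alpha}$ by smoothness and compactness; transitivity and finiteness of the cover yield the global mutual bound. Hence $(\mu_\mg,\mg)\sim(\mu_\mh,\mh)$, and applying Theorem~\ref{Thm:GenHodgeDiracMain}\ref{Itm:GenHodgeDiracMain:4} with $\mg_1=\mg$ and $\mg_2=\mh$ gives
\[
\ker(\DDir_{\mg,\Hodge}\rest{\Forms[k]\cM}) \cong \ker(\DDir_{\mh,\Hodge}\rest{\Forms[k]\cM}).
\]

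Finally, since $\mh$ is smooth and $\cM$ is compact without boundary, the classical Hodge theorem (recalled as \eqref{Eq:Hodge} in the introduction) furnishes $\ker(\DDir_{\mh,\Hodge}\rest{\Forms[k]\cM})\cong \Hom[k]_{\dR}(\cM)$, and composing the two isomorphisms completes the proof. No step is genuinely difficult: the only point requiring any care is verifying the mutual boundedness of $\mg$ and a chosen smooth $\mh$, which is a routine compactness argument. All other ingredients are either built into the definitions ($\extd_0=\extd_2$ absent a boundary) or are direct invocations of Theorem~\ref{Thm:GenHodgeDiracMain} and the classical Hodge theorem.
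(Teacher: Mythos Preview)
Your proposal is correct and follows essentially the same approach as the paper. The paper does not give a formal proof of this theorem but indicates immediately after it that the result ``can be obtained by the uniqueness of the extension coupled with elliptic regularity on changing to a smooth auxiliary background, reducing the question to smooth results,'' and in Subsection~\ref{Ex:HodgeDirac} writes the same chain of isomorphisms you use, noting that ``the second isomorphism is obtained via using an auxiliary smooth metric and invoking the classic Hodge theorem for that metric''; your argument simply fleshes out these remarks, and your compactness justification for $\mg\sim\mh$ is exactly the content of Lemma~\ref{Lem:LocComp} applied with $K=\cM$.
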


Similar assertions hold for Hodge-Dirac operators $\DDir_{e,\mg} = \extd_e  + \extd_e^{\ast,\mg}$ arising from nilpotent extensions on non-compact manifolds. 
However, in this case, the smooth cohomology $\Hom[k]_{\dR}(\cM)$ needs to be replaced with the $\Lp{2}$-cohomology  $\displaystyle{\Hom[k]_{\dR}(\extd_{e}) = \faktor{\ker \extd_{e}\rest{\Forms[k]\cM} }{\close{\ran(\extd_{e}\rest{\Forms[k+1]\cM}})}}$. 
Seen from this point of view, the passage to the smooth cohomology as we have written in Theorem~\ref{Thm:HodgeDiracThm} for the compact case can be obtained by the uniqueness of the extension coupled with elliptic regularity on changing to a smooth auxiliary background, reducing the question to smooth results.

\appendix
\section{Abstract Hodge-Dirac-type theory}
\label{Appendix} 

\subsection{Nilpotent operators, Hodge decompositions and cohomology}

Let $\Hil$ be a separable Hilbert space and $\Gamma: \dom(\Gamma) \to \Hil$ a closed, densely-defined, and nilpotent operator. 
Here, nilpotency means that $\ran(\Gamma) \subset \ker(\Gamma)$.
Since $\Gamma$ is closed, we have that $\ker(\Gamma)$ is closed, and the nilpotent condition self improves to $\close{\ran(\Gamma)} \subset \ker(\Gamma)$.

All of the results presented here until the end of this subsection are proved in \cite{AKMc} (in fact, in even greater generality). 
They can also be found in \cite{AxMc} as well as in Chapter~10.1 in \cite{RosenBook}. 
However, we include it here in order to provide complete proofs of the results we obtain in this paper. 

\begin{lemma}
\label{Lem:AbsAdj}
The operator $\Gamma^\ast: \dom(\Gamma^\ast) \subset \Hil \to \Hil$ is a closed, densely-defined, nilpotent operator.
\end{lemma}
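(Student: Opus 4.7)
The plan is to verify the three properties separately, relying on standard adjoint theory in Hilbert spaces for the first two and then deducing nilpotency by dualising the inclusion $\close{\ran(\Gamma)} \subset \ker(\Gamma)$.

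First I would note that the adjoint $\Gamma^\ast$ of any densely-defined operator on a Hilbert space is automatically closed; this is a classical consequence of the definition of the adjoint via the graph. Next, since $\Gamma$ is itself closed (and in particular closable), a standard theorem (see e.g.\ the von Neumann theory of unbounded operators) asserts that $\Gamma^\ast$ is densely-defined and moreover $\Gamma^{\ast\ast} = \Gamma$. This gives closedness and dense definability with essentially no work.

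The substantive point is to show nilpotency, i.e.\ $\ran(\Gamma^\ast) \subset \ker(\Gamma^\ast)$. For this I would invoke the two standard orthogonal complement identities for closed densely-defined operators on a Hilbert space:
\[
\ker(\Gamma^\ast) = \ran(\Gamma)^\perp, \qquad \close{\ran(\Gamma^\ast)} = \ker(\Gamma)^\perp.
\]
From the nilpotency of $\Gamma$ we have $\close{\ran(\Gamma)} \subset \ker(\Gamma)$, and taking orthogonal complements reverses this inclusion to give
\[
\ker(\Gamma)^\perp \subset \close{\ran(\Gamma)}^\perp = \ran(\Gamma)^\perp.
\]
Combining these yields $\close{\ran(\Gamma^\ast)} = \ker(\Gamma)^\perp \subset \ran(\Gamma)^\perp = \ker(\Gamma^\ast)$, so in particular $\ran(\Gamma^\ast) \subset \ker(\Gamma^\ast)$, which is exactly nilpotency.

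There is no real obstacle here; the only subtlety is making sure to use the correct two identities (one needs the closure on the range of $\Gamma^\ast$, while the kernel of $\Gamma^\ast$ is already closed). Everything else is an invocation of textbook Hilbert-space adjoint theory applied to the nilpotency hypothesis on $\Gamma$.
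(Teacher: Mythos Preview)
Your proof is correct and essentially follows the same approach as the paper. The paper verifies nilpotency by the direct computation $\inprod{\Gamma^\ast w, \Gamma u} = \inprod{w, \Gamma^2 u} = 0$ for all $u \in \dom(\Gamma)$ (using $\ran(\Gamma) \subset \ker(\Gamma) \subset \dom(\Gamma)$), which is precisely the elementwise version of your orthogonal-complement duality argument.
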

\begin{proof}
The fact that $\Gamma^\ast$ exists as a closed and densely-defined operator follows from operator theory, c.f. Proposition III-5.29 \cite{Kato}.
For nilpotency, let $v \in \ran(\Gamma^\ast)$ and $u \in \dom(\Gamma)$ and so,
$$ \inprod{v, \Gamma u} = \inprod{\Gamma^\ast w, \Gamma u} = \inprod{w, \Gamma^2 u } = 0.$$
This shows that $v \in \dom(\Gamma^\ast)$ and that $\Gamma^\ast v = 0$.
That is, $v \in \ker(\Gamma^\ast)$.
\end{proof}

\begin{definition}[Hodge-Dirac-type operator]
Define  $\Pi: \dom(\Pi) \subset \Hil \to \Hil$ with domain $\dom(\Pi) = \dom(\Gamma) \cap \dom(\Gamma^\ast)$ by  $\Pi := \Gamma + \Gamma^\ast$.
\end{definition} 

\begin{lemma}
\label{Lem:PiBd} 
For all $u \in \dom(\Pi)$, we have that
$$  \norm{\Gamma u} + \norm{\Gamma^\ast u} \simeq \norm{\Pi u}.$$
\end{lemma}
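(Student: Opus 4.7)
The plan is to exploit nilpotency to turn $\|\Pi u\|^2$ into a sum of squares via the Pythagorean theorem, after which both bounds become trivial.

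First, I would observe that for any $u \in \dom(\Pi) = \dom(\Gamma) \cap \dom(\Gamma^\ast)$, the vectors $\Gamma u$ and $\Gamma^\ast u$ are orthogonal. Indeed, by nilpotency (which self-improves to $\close{\ran(\Gamma)} \subset \ker(\Gamma)$ since $\Gamma$ is closed), we have $\Gamma u \in \ran(\Gamma) \subset \ker(\Gamma) \subset \dom(\Gamma)$, and $\Gamma(\Gamma u) = 0$. Since $u \in \dom(\Gamma^\ast)$ and $\Gamma u \in \dom(\Gamma)$, the adjoint pairing gives
\[
\inprod{\Gamma u, \Gamma^\ast u} = \inprod{\Gamma(\Gamma u), u} = 0.
\]

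Next, from this orthogonality, the Pythagorean identity yields
\[
\norm{\Pi u}^2 = \norm{\Gamma u + \Gamma^\ast u}^2 = \norm{\Gamma u}^2 + \norm{\Gamma^\ast u}^2.
\]
The upper bound $\norm{\Pi u} \leq \norm{\Gamma u} + \norm{\Gamma^\ast u}$ is immediate from the triangle inequality (or from the identity above, since $a^2 + b^2 \leq (a+b)^2$ for $a,b \geq 0$). For the lower bound, the Pythagorean identity directly implies $\norm{\Gamma u} \leq \norm{\Pi u}$ and $\norm{\Gamma^\ast u} \leq \norm{\Pi u}$, hence $\norm{\Gamma u} + \norm{\Gamma^\ast u} \leq 2 \norm{\Pi u}$. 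Combining, we obtain the desired equivalence.

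The only step requiring care is the justification of the adjoint pairing $\inprod{\Gamma u, \Gamma^\ast u} = \inprod{\Gamma(\Gamma u), u}$; this would be the main (minor) obstacle, but it is resolved by noting that nilpotency guarantees $\Gamma u \in \dom(\Gamma)$, so the adjoint relation $\inprod{\Gamma v, w} = \inprod{v, \Gamma^\ast w}$ applies with $v = \Gamma u$ and $w = u$. No density or approximation argument is needed.
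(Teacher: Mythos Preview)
Your proof is correct and follows essentially the same approach as the paper: both arguments establish that $\Gamma u$ and $\Gamma^\ast u$ are orthogonal via nilpotency (you use $\Gamma^2 = 0$, the paper uses $(\Gamma^\ast)^2 = 0$), after which the equivalence is immediate. Your presentation via the Pythagorean identity $\norm{\Pi u}^2 = \norm{\Gamma u}^2 + \norm{\Gamma^\ast u}^2$ is slightly more direct than the paper's Cauchy--Schwarz step, but the substance is identical.
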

\begin{proof}
For $u \in \dom(\Pi)$, we see that
$$
\norm{\Gamma u}^2 = \norm{\Gamma u}^2 + \inprod{(\Gamma^\ast)^2 u, u} = \inprod{\Gamma u, \Gamma u} + \inprod{\Gamma^\ast u, \Gamma u} = \inprod{\Pi u, \Gamma u} \leq \norm{\Pi u} \norm{\Gamma u}.$$
Similarly, we have $\norm{\Gamma^\ast u} \leq \norm{\Pi u}.$ The reverse inequalities are immediate.
\end{proof}

\begin{corollary}
\label{Cor:KerPi}
The kernel $\ker(\Pi) = \ker(\Gamma) \cap \ker(\Gamma^\ast)$.
\end{corollary}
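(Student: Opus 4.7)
The plan is to prove the two inclusions separately, with the nontrivial direction being an immediate consequence of the norm equivalence from Lemma~\ref{Lem:PiBd}.

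For the inclusion $\ker(\Gamma) \cap \ker(\Gamma^\ast) \subseteq \ker(\Pi)$, I would first observe that any $u \in \ker(\Gamma) \cap \ker(\Gamma^\ast)$ automatically lies in $\dom(\Gamma) \cap \dom(\Gamma^\ast) = \dom(\Pi)$. Then $\Pi u = \Gamma u + \Gamma^\ast u = 0 + 0 = 0$, giving $u \in \ker(\Pi)$ directly from the definition of $\Pi$.

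For the reverse inclusion $\ker(\Pi) \subseteq \ker(\Gamma) \cap \ker(\Gamma^\ast)$, let $u \in \ker(\Pi)$, so $u \in \dom(\Pi)$ and $\Pi u = 0$. By Lemma~\ref{Lem:PiBd}, there is a constant $C$ such that
\[
\norm{\Gamma u} + \norm{\Gamma^\ast u} \leq C \norm{\Pi u} = 0,
\]
forcing $\Gamma u = 0$ and $\Gamma^\ast u = 0$ separately. Hence $u \in \ker(\Gamma) \cap \ker(\Gamma^\ast)$, completing the proof.

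There is no real obstacle here: the content of the corollary is packaged entirely in the estimate already proved in Lemma~\ref{Lem:PiBd}, whose proof in turn relied on the nilpotency identity $\inprod{(\Gamma^\ast)^2 u, u} = 0$ that makes $\Gamma u$ and $\Gamma^\ast u$ orthogonal summands of $\Pi u$. In fact, one could alternatively observe that $\norm{\Pi u}^2 = \norm{\Gamma u}^2 + \norm{\Gamma^\ast u}^2$ by expanding the inner product and using nilpotency to kill the cross terms, which gives the reverse inclusion even more transparently as a Pythagorean identity.
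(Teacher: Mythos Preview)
Your proof is correct and follows essentially the same approach as the paper: the easy inclusion is immediate from the definition of $\Pi$, and the reverse inclusion is obtained directly from the estimate in Lemma~\ref{Lem:PiBd}. Your write-up is in fact more explicit than the paper's, and the additional Pythagorean remark is a pleasant aside but not a departure from the paper's argument.
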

\begin{proof} 
It is easy to see that $\ker(\Gamma) \cap \ker(\Gamma^\ast) \subset \dom(\Gamma)$ by definition and moreover, that $\ker(\Gamma) \cap \ker(\Gamma^\ast) \subset \ker(\Gamma)$.
The reverse containment follows immediately the estimate in Lemma~\ref{Lem:PiBd}
\end{proof} 

\begin{corollary}
\label{Cor:PiClosed}
The operator $\Pi$ is closed. 
\end{corollary}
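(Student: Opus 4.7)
The plan is to deduce closedness of $\Pi$ directly from Lemma~\ref{Lem:PiBd}, together with the separate closedness of $\Gamma$ and $\Gamma^\ast$ (the latter coming from Lemma~\ref{Lem:AbsAdj}). The key observation is that the norm equivalence $\norm{\Gamma u} + \norm{\Gamma^\ast u} \simeq \norm{\Pi u}$ on $\dom(\Pi)$ means that Cauchy behaviour of $\Pi u_n$ automatically forces Cauchy behaviour of both $\Gamma u_n$ and $\Gamma^\ast u_n$ individually. Once we have that, we can simply appeal to the closedness of each summand separately.

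In more detail, I would take a sequence $(u_n) \subset \dom(\Pi)$ with $u_n \to u$ in $\Hil$ and $\Pi u_n \to v$ in $\Hil$, and aim to show that $u \in \dom(\Pi)$ with $\Pi u = v$. Applying Lemma~\ref{Lem:PiBd} to $u_n - u_m \in \dom(\Pi)$ yields
\[
\norm{\Gamma u_n - \Gamma u_m} + \norm{\Gamma^\ast u_n - \Gamma^\ast u_m} \lesssim \norm{\Pi u_n - \Pi u_m},
\]
and the right-hand side is Cauchy by hypothesis. Hence $(\Gamma u_n)$ and $(\Gamma^\ast u_n)$ are each Cauchy in $\Hil$, so they converge to some $w_1, w_2 \in \Hil$ respectively.

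Since $u_n \to u$ and $\Gamma u_n \to w_1$, the closedness of $\Gamma$ forces $u \in \dom(\Gamma)$ and $\Gamma u = w_1$. By Lemma~\ref{Lem:AbsAdj}, $\Gamma^\ast$ is likewise closed, so from $u_n \to u$ and $\Gamma^\ast u_n \to w_2$ we conclude $u \in \dom(\Gamma^\ast)$ and $\Gamma^\ast u = w_2$. Therefore $u \in \dom(\Gamma) \cap \dom(\Gamma^\ast) = \dom(\Pi)$, and
\[
\Pi u = \Gamma u + \Gamma^\ast u = w_1 + w_2 = \lim_{n \to \infty} (\Gamma u_n + \Gamma^\ast u_n) = \lim_{n \to \infty} \Pi u_n = v.
\]

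There is no real obstacle here; the only subtlety is recognising that $\Pi$ being a sum of two closed operators is, in general, not enough to guarantee closedness of the sum, so the two-sided estimate of Lemma~\ref{Lem:PiBd} is doing the essential work by decoupling the convergence of $\Pi u_n$ into convergence of its two pieces.
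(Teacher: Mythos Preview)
Your proof is correct and essentially identical to the paper's own argument: both apply Lemma~\ref{Lem:PiBd} to differences $u_n - u_m$ to deduce that $(\Gamma u_n)$ and $(\Gamma^\ast u_n)$ are separately Cauchy, then invoke the closedness of $\Gamma$ and $\Gamma^\ast$ individually. Your closing remark about why the estimate is indispensable (the sum of closed operators need not be closed) is a helpful addition not made explicit in the paper.
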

\begin{proof}
Let $\dom(\Pi) \ni u_n \to u$ and $\Pi u_n \to v$.
From Lemma~\ref{Lem:PiBd}, we have that  
$$\norm{\Gamma u_n - \Gamma u_m} + \norm{\Gamma^\ast u_n - \Gamma^\ast u_m} \lesssim \norm{\Pi u_n - \Pi u_m} \to 0.$$
Therefore, $\Gamma u_n \to w$ and $\Gamma^\ast u_n \to w^\ast$.
By the closedness of $\Gamma$ and $\Gamma^\ast$, we obtain that $u \in \dom(\Gamma)$ and $u \in \dom(\Gamma^\ast)$ and $w = \Gamma u$, $w^\ast = \Gamma^\ast u$.
Therefore, $u \in \dom(\Pi)$ and $\Pi u = v$.
\end{proof}

Now, we obtain the main Hodge-Decomposition with regards to this operator.

\begin{proposition}
\label{Prop:HodgeDecomp'}
We have
$$\Hil = \ker(\Gamma) \cap \ker(\Gamma^\ast) \stackrel{\perp} {\oplus} \close{\ran(\Gamma)} \stackrel{\perp} {\oplus} \close{\ran(\Gamma^\ast)}.$$
\end{proposition}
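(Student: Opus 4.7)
The strategy is to combine the standard orthogonal decomposition for a closed densely-defined operator with the nilpotency condition $\close{\ran(\Gamma)} \subset \ker(\Gamma)$.

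First, I would invoke the general Hilbert space fact that, for any closed densely-defined $T$, we have $\Hil = \ker(T) \stackrel{\perp}{\oplus} \close{\ran(T^\ast)}$, applied both to $T = \Gamma$ and to $T = \Gamma^\ast$ (recalling from Lemma~\ref{Lem:AbsAdj} that $\Gamma^\ast$ is itself closed, densely-defined and nilpotent, so that $\Gamma^{\ast\ast} = \Gamma$). This gives the two orthogonal splittings
\[
\Hil = \ker(\Gamma) \stackrel{\perp}{\oplus} \close{\ran(\Gamma^\ast)}
\quad\text{and}\quad
\Hil = \ker(\Gamma^\ast) \stackrel{\perp}{\oplus} \close{\ran(\Gamma)}.
\]

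Next, I would refine the first decomposition within $\ker(\Gamma)$. By nilpotency, $\close{\ran(\Gamma)} \subset \ker(\Gamma)$, so $\close{\ran(\Gamma)}$ is a closed subspace of $\ker(\Gamma)$ and we may split $\ker(\Gamma) = \close{\ran(\Gamma)} \stackrel{\perp}{\oplus} W$, where the orthogonal complement $W$ is taken inside $\ker(\Gamma)$. Given $u \in W$, the condition $\inprod{u, \Gamma v} = 0$ for all $v \in \dom(\Gamma)$ shows $u \in \dom(\Gamma^\ast)$ with $\Gamma^\ast u = 0$; since also $u \in \ker(\Gamma)$, we get $u \in \ker(\Gamma) \cap \ker(\Gamma^\ast)$. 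Conversely, any element of $\ker(\Gamma) \cap \ker(\Gamma^\ast)$ is orthogonal to $\ran(\Gamma)$ and hence to $\close{\ran(\Gamma)}$, so $W = \ker(\Gamma) \cap \ker(\Gamma^\ast)$.

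Substituting back, I obtain
\[
\Hil = \bigl(\ker(\Gamma) \cap \ker(\Gamma^\ast)\bigr) \stackrel{\perp}{\oplus} \close{\ran(\Gamma)} \stackrel{\perp}{\oplus} \close{\ran(\Gamma^\ast)},
\]
where the last orthogonality $\close{\ran(\Gamma)} \perp \close{\ran(\Gamma^\ast)}$ follows from the symmetric nilpotency $\close{\ran(\Gamma^\ast)} \subset \ker(\Gamma^\ast)$ together with $\ran(\Gamma) \perp \ker(\Gamma^\ast)$.

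The only mild subtlety is the verification that the orthogonal complement of $\close{\ran(\Gamma)}$ inside $\ker(\Gamma)$ is precisely $\ker(\Gamma) \cap \ker(\Gamma^\ast)$ rather than something larger; this rests on the standard duality $\ran(\Gamma)^\perp = \ker(\Gamma^\ast)$. Everything else is a bookkeeping consequence of orthogonality and the two containments $\close{\ran(\Gamma)} \subset \ker(\Gamma)$ and $\close{\ran(\Gamma^\ast)} \subset \ker(\Gamma^\ast)$.
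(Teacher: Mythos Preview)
Your proof is correct. Both you and the paper start from the same two orthogonal splittings $\Hil = \ker(\Gamma) \stackrel{\perp}{\oplus} \close{\ran(\Gamma^\ast)} = \ker(\Gamma^\ast) \stackrel{\perp}{\oplus} \close{\ran(\Gamma)}$, but the way the triple decomposition is extracted differs. You work directly inside $\ker(\Gamma)$: using nilpotency to view $\close{\ran(\Gamma)}$ as a closed subspace, you take its orthogonal complement $W$ there and identify $W = \ker(\Gamma)\cap\ker(\Gamma^\ast)$ via the duality $\ran(\Gamma)^\perp = \ker(\Gamma^\ast)$. The paper instead packages the combinatorics into a separate general Banach space lemma (Proposition~\ref{Prop:Decomp}): given two complementary splittings $\Spa = Y\oplus Z = Y'\oplus Z'$ with $Y\subset Z'$ and $Y'\subset Z$, one always has $\Spa = (Z\cap Z')\oplus Y\oplus Y'$; this is then applied with $Z'=\ker(\Gamma)$, $Z=\ker(\Gamma^\ast)$, $Y=\close{\ran(\Gamma)}$, $Y'=\close{\ran(\Gamma^\ast)}$. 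Your argument is more elementary and self-contained but Hilbert-specific; the paper's route isolates a reusable Banach space fact at the cost of an extra auxiliary result.
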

\begin{proof}
From the fact that $\Gamma$ and subsequently $\Gamma^\ast$ are closed, we have that 
$$ \Hil = \ker(\Gamma) \oplus \close{\ran(\Gamma^\ast)} = \ker(\Gamma^\ast) \oplus \close{\ran(\Gamma)}.$$
Therefore, by choosing: 
$$Z' := \ker(\Gamma),\ Z :=\ker(\Gamma^\ast), Y := \close{\ran(\Gamma)}, Y':= \close{\ran(\Gamma^\ast)},$$
and using the fact that  $Y = \close{\ran(\Gamma)} \subset \ker(\Gamma) = Z'$ and similarly $Y' \subset Z$, Proposition~\ref{Prop:Decomp} yields the desired result.
\end{proof}

By the nilpotency of $\Gamma$, as aforementioned, we have that $\close{\ran(\Gamma)} \subset \ker(\Gamma)$.
Therefore, we can define the $\Gamma$-cohomology as:
$$ \Hom(\Gamma) := \faktor{\ker(\Gamma)}{\close{\ran(\Gamma)}}.$$
An immediate consequence of the Hodge decomposition from above is the following. 

\begin{corollary}
\label{Cor:CohomGamma}
We have that $\ker(\Gamma) =  \ker(\Pi) \oplus \close{\ran(\Gamma)}$ and therefore, 
$$ \ker(\Pi) \cong \Hom(\Gamma)$$ 
where the isomorphism is given by $\Proj{\ker(\Pi), \close{\ran(\Gamma)}}$. 
\end{corollary}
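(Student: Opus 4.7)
The plan is to use the Hodge decomposition from Proposition~\ref{Prop:HodgeDecomp'} together with the identification $\ker(\Pi) = \ker(\Gamma) \cap \ker(\Gamma^\ast)$ from Corollary~\ref{Cor:KerPi}, and reduce everything to an orthogonality argument within the closed subspace $\ker(\Gamma)$.

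First, I would establish the decomposition $\ker(\Gamma) = \ker(\Pi) \oplus \close{\ran(\Gamma)}$. The standard duality fact $\Hil = \ker(\Gamma) \oplus \close{\ran(\Gamma^\ast)}$ identifies $\close{\ran(\Gamma^\ast)}$ with $\ker(\Gamma)^\perp$. Applying this to the three-fold decomposition in Proposition~\ref{Prop:HodgeDecomp'}, any $u \in \ker(\Gamma)$ splits as $u = u_1 + u_2 + u_3$ with $u_1 \in \ker(\Pi)$, $u_2 \in \close{\ran(\Gamma)}$, and $u_3 \in \close{\ran(\Gamma^\ast)}$; since $u - u_3 \in \ker(\Gamma)$ and $u_3 \in \ker(\Gamma)^\perp$, we get $u_3 = 0$. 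This yields $\ker(\Gamma) \subseteq \ker(\Pi) \oplus \close{\ran(\Gamma)}$, and the reverse containment is automatic from $\ker(\Pi) \subseteq \ker(\Gamma)$ and the nilpotency $\close{\ran(\Gamma)} \subseteq \ker(\Gamma)$. The sum is orthogonal inside $\ker(\Gamma)$ because it is inherited from the orthogonality in $\Hil$.

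Next, I would define $\Phi : \ker(\Pi) \to \Hom(\Gamma)$ as the composition of the inclusion $\ker(\Pi) \embed \ker(\Gamma)$ with the quotient map $\ker(\Gamma) \to \ker(\Gamma)/\close{\ran(\Gamma)}$. Injectivity is immediate: if $u \in \ker(\Pi) \cap \close{\ran(\Gamma)}$, then $u \in \ker(\Gamma^\ast)$ and the orthogonality $\ker(\Gamma^\ast) \perp \close{\ran(\Gamma)}$ forces $\inprod{u,u} = 0$, hence $u = 0$. Surjectivity follows directly from the decomposition in the previous paragraph: any $v \in \ker(\Gamma)$ writes as $v = u + w$ with $u \in \ker(\Pi)$ and $w \in \close{\ran(\Gamma)}$, so $[v] = [u] = \Phi(u)$.

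Finally, to identify $\Phi^{-1}$ with the projection $\Proj{\ker(\Pi), \close{\ran(\Gamma)}}$, I observe that the decomposition $\ker(\Gamma) = \ker(\Pi) \oplus \close{\ran(\Gamma)}$ makes this projection well-defined on $\ker(\Gamma)$ with image $\ker(\Pi)$ and kernel $\close{\ran(\Gamma)}$; hence it factors through the quotient $\Hom(\Gamma)$ as the inverse of $\Phi$. There is no serious obstacle here, since all the analytic work has been packaged into the Hodge decomposition; the argument is essentially linear algebra once the decomposition is established, and the main step to take care over is verifying that the residual piece $u_3$ in $\close{\ran(\Gamma^\ast)}$ vanishes for $u \in \ker(\Gamma)$, which is where the orthogonality of the Hodge splitting is essential.
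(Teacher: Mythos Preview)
Your proposal is correct and follows essentially the same approach as the paper: both use the three-fold Hodge decomposition from Proposition~\ref{Prop:HodgeDecomp'} together with the identification $\ker(\Gamma) = \close{\ran(\Gamma^\ast)}^\perp$ to conclude $\ker(\Gamma) = \ker(\Pi) \oplus \close{\ran(\Gamma)}$, from which the isomorphism is immediate. The paper simply reads off the decomposition from the orthogonal complement characterisation in one line, whereas you spell out the same argument element-wise and make the isomorphism explicit; the content is identical.
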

\begin{proof}
By Corollary~\ref{Cor:KerPi}, we can write the Hodge decomposition in Proposition~\ref{Prop:HodgeDecomp} as
$$ \Hil = \ker(\Pi) \stackrel{\perp} {\oplus} \close{\ran(\Gamma)} \stackrel{\perp} {\oplus} \close{\ran(\Gamma^\ast)}.$$
However, since $\Gamma$ is a closed and densely-defined operator, we also have that
$$ \Hil = \ker(\Gamma) \stackrel{\perp}{\oplus} \close{\ran(\Gamma^\ast)}.$$
Therefore, 
$$\ker(\Gamma) =  \close{\ran(\Gamma^\ast)}^\perp =  \ker(\Pi) \stackrel{\perp} {\oplus} \close{\ran(\Gamma)}.$$
The isomorphism is then simply a consequence.
\end{proof} 

\begin{remark}
Note that the operator $\Pi$ here depends on the metric, as it is built out of both $\Gamma$ and $\Gamma^\ast$. 
Nevertheless, the kernel is now described, through this isomorphism, via the cohomology, which is independent of the metric. 
\end{remark}

\subsection{Hodge-Dirac-type operators}

\begin{proposition} 
\label{Prop:DenselyDefined}
The operator $\Pi: \dom(\Pi) \subset \Hil \to \Hil$ is a densely-defined, closed, self-adjoint operator. 
Moreover, 
$$ \Hil = \ker(\Pi) \stackrel{\perp} {\oplus} \close{\ran(\Pi)}$$ 
and 
$$ \ker(\Pi) = \ker(\Gamma) \cap \ker(\Gamma^\ast)\quad\text{and}\quad 
\close{\ran(\Pi)} = \close{\ran(\Gamma)} \stackrel{\perp}{\oplus} \close{\ran(\Gamma^\ast)}.$$ 
\end{proposition}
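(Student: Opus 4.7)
The plan is as follows. Closedness of $\Pi$ and the identity $\ker(\Pi) = \ker(\Gamma) \cap \ker(\Gamma^\ast)$ are already provided by Corollary~\ref{Cor:PiClosed} and Corollary~\ref{Cor:KerPi}, and symmetry of $\Pi$ on $\dom(\Pi)$ is immediate from the mutual adjointness of $\Gamma$ and $\Gamma^\ast$. This leaves three tasks: density of $\dom(\Pi)$, upgrading symmetry to self-adjointness, and identification of $\close{\ran(\Pi)}$.

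For density, I will write $H_0 := \ker(\Pi)$, $H_1 := \close{\ran(\Gamma)}$, $H_2 := \close{\ran(\Gamma^\ast)}$, so that $\Hil = H_0 \oplus H_1 \oplus H_2$ orthogonally by Proposition~\ref{Prop:HodgeDecomp'}. Trivially $H_0 \subset \dom(\Pi)$. For the $H_1$-component, I will invoke that $\Gamma^\ast \Gamma$ is positive self-adjoint (since $\Gamma$ is closed and densely-defined) so that $\dom(\Gamma^\ast \Gamma)$ is a core for $\Gamma$, whence $\Gamma(\dom(\Gamma^\ast \Gamma))$ is dense in $\ran(\Gamma)$ and thus in $H_1$. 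For $u \in \dom(\Gamma^\ast \Gamma)$, by definition $\Gamma u \in \dom(\Gamma^\ast)$, and by nilpotency $\Gamma u \in \ker(\Gamma) \subset \dom(\Gamma)$, so $\Gamma u \in \dom(\Pi) \cap H_1$. The symmetric argument with $\Gamma \Gamma^\ast$ produces a dense subset of $H_2$ inside $\dom(\Pi)$, and density of $\dom(\Pi)$ in $\Hil$ follows.

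For self-adjointness, since $\Pi \subset \Pi^\ast$ by symmetry, it suffices to prove $\dom(\Pi^\ast) \subset \dom(\Pi)$. Fix $v \in \dom(\Pi^\ast)$ with $\Pi^\ast v = w$, and decompose $v = v_0 + v_1 + v_2$ and $w = w_0 + w_1 + w_2$ along the Hodge splitting. The key observation is that nilpotency self-improves to $H_1 \subset \ker(\Gamma) \subset \dom(\Gamma)$ and $H_2 \subset \ker(\Gamma^\ast) \subset \dom(\Gamma^\ast)$. For any $u \in \dom(\Gamma)$, this forces the Hodge components $u_0, u_1$ to lie in $\dom(\Gamma)$ and hence so does $u_2 = u - u_0 - u_1$; moreover $u_2 \in H_2 \subset \dom(\Gamma^\ast)$, so $u_2 \in \dom(\Pi)$ and $\Pi u_2 = \Gamma u_2 \in H_1$. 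Testing the defining identity of $\Pi^\ast$ against such $u_2$ gives $\inprod{\Gamma u_2, v_1} = \inprod{u_2, w_2}$, and since $\Gamma u = \Gamma u_2$, I will read off $|\inprod{\Gamma u, v_1}| \leq \norm{w_2}\norm{u}$ for all $u \in \dom(\Gamma)$, yielding $v_1 \in \dom(\Gamma^\ast)$ and therefore $v_1 \in \dom(\Pi)$. The symmetric argument handles $v_2$, so $v = v_0 + v_1 + v_2 \in \dom(\Pi)$.

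The principal obstacle is this self-adjointness step, as neither $H_1 \subset \dom(\Gamma^\ast)$ nor $H_2 \subset \dom(\Gamma)$ holds a priori; the regularity of $v$ granted by $v \in \dom(\Pi^\ast)$ must be propagated through the Hodge decomposition using only nilpotency. Once self-adjointness is secured, $\Hil = \ker(\Pi) \oplus \close{\ran(\Pi)}$ is the standard orthogonal range--kernel splitting for a self-adjoint operator, and the equality $\close{\ran(\Pi)} = \close{\ran(\Gamma)} \oplus \close{\ran(\Gamma^\ast)}$ follows by taking orthogonal complements of $\ker(\Pi) = \ker(\Gamma) \cap \ker(\Gamma^\ast)$ inside the Hodge decomposition of Proposition~\ref{Prop:HodgeDecomp'}.
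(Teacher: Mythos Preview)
Your proposal is correct and follows the same overall architecture as the paper's proof: closedness and the kernel identity are taken from the preceding corollaries, density is established componentwise along the Hodge decomposition of Proposition~\ref{Prop:HodgeDecomp'}, self-adjointness is obtained by showing $\dom(\Pi^\ast) \subset \dom(\Pi)$ via the Hodge splitting and nilpotency, and the range identity follows by taking orthogonal complements. The one substantive difference is in the density step. The paper approximates $v \in \close{\ran(\Gamma)}$ by elements of $\dom(\Gamma^\ast)$ (using that $\Gamma^\ast$ is densely-defined) and then projects these approximants onto $\close{\ran(\Gamma)}$ along $\ker(\Gamma^\ast)$, observing that the projection stays in $\dom(\Gamma^\ast)$ and lands in $\ker(\Gamma)$ by nilpotency. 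You instead invoke von~Neumann's theorem that $\dom(\Gamma^\ast\Gamma)$ is a core for $\Gamma$, which hands you the dense subset $\Gamma(\dom(\Gamma^\ast\Gamma)) \subset \dom(\Pi) \cap \close{\ran(\Gamma)}$ directly. Your route is slightly slicker here, at the cost of importing a (standard) outside fact; the paper's is more self-contained. Your self-adjointness argument is organised dually to the paper's---you decompose $v \in \dom(\Pi^\ast)$ and show each Hodge component lies in $\dom(\Pi)$, whereas the paper bounds $|\inprod{u,\Gamma^\ast w}|$ uniformly over $w \in \dom(\Gamma^\ast)$ by reducing to the $\close{\ran(\Gamma)}$-component of $w$---but the underlying mechanism (nilpotency forcing the relevant Hodge components into $\dom(\Pi)$, then applying the defining bound for $\Pi^\ast$) is identical.
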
 
\begin{proof}
\begin{enumerate}[label=(\roman*), itemindent=0em,listparindent=0em, parsep=.5\baselineskip,itemsep=.5\baselineskip] 
\item \label{It:Cor:DD:1}
Claim: $\ker(\Pi) = \ker(\Gamma) \cap \ker(\Gamma^\ast)$.

This is already established in Corollary~\ref{Cor:KerPi}.

\item \label{It:Cor:DD:2}
Claim: for $v \in \close{\ran(\Gamma)}$, there exists $v_n \in \dom(\Pi)$ such that $v_n \to v$. Fix $v \in \close{\ran(\Gamma)}$.
Since $\Gamma^\ast$ is densely-defined, we have a sequence $v_n \in \dom(\Gamma^\ast)$ such that $v_n \to v$ in $\Hil$.
However, we have a splitting $\Hil = \ker(\Gamma^\ast) \oplus \close{\ran(\Gamma)}$.
Letting $v'_n := \Proj{\close{\ran(\Gamma)}, \ker(\Gamma^\ast)}v_n$, we have that $v'_n \to \Proj{\close{\ran(\Gamma)}, \ker(\Gamma^\ast)}v = v$.
Moreover, $v'_n = v - \Proj{ \ker(\Gamma^\ast),\close{\ran(\Gamma)}}v_n \in \dom(\Gamma^\ast) \oplus \dom(\Gamma^\ast) = \dom(\Gamma^\ast)$.
That is, 
$$v'_n \in \close{\ran(\Gamma)} \cap \dom(\Gamma^\ast) \subset \ker(\Gamma) \cap \dom(\Gamma^\ast )\subset \dom(\Gamma) \cap \dom(\Gamma^\ast ) \subset \dom(\Pi),$$
where the first set inclusion follows from the nilpotency of $\Gamma$ providing $\close{\ran(\Gamma)} \subset \ker(\Gamma)$ and demonstrates that $\dom(\Pi)$ is dense in $\close{\ran(\Gamma)}$.

\item Claim: for $w \in \close{\ran(\Gamma^\ast)}$, there exists $w_n \in \dom(\Pi)$ such that $w_n \to w$. This from an identical argument upon interchanging $\Gamma$ and $\Gamma^\ast$ in \ref{It:Cor:DD:2}

\item Claim: $\dom(\Pi)$ is dense in $\Hil$. First, couple \ref{It:Cor:DD:1} with Proposition~\ref{Prop:HodgeDecomp} to obtain 
\begin{equation*} 
\label{Eq:NewHodge}
\Hil = \ker(\Pi) \stackrel{\perp} {\oplus} \close{\ran(\Gamma)} \stackrel{\perp} {\oplus} \close{\ran(\Gamma^\ast)}.
\end{equation*}
So for $u \in \Hil$, we have $u = u_0 + u_1 + u_2 \in \ker(\Pi) \oplus \close{\ran(\Gamma)} \oplus \close{\ran(\Gamma^\ast)}$.
By what we have just established, there exists $u_{1,n} \in \dom(\Pi)$ with $u_{1,n} \to u_1$ and $u_{2,n} \in \dom(\Pi)$ with $u_{2,n} \to u_2$.
Define $w_n :=  u_0 + u_{1,n} + u_{2,n}$ and clearly $w_n \in \dom(\Pi)$ since $u_0 \in \ker(\Gamma)$.
Then, 
\begin{align*} 
\norm{w_n - u} 
= \norm{u_0 + u_{1,n} + u_{2,n} - u_0 - u_1 - u_2} 
&=  \norm{u_{1,n} - u_1 + u_{2,n} - u_2}  \\
&\qquad\leq \norm{u_{1,n} - u_1} + \norm{u_{2,n}- u_2} \to 0
\end{align*} 
as $n \to \infty$.
This shows $\dom(\Pi)$ is dense in $\Hil$. 

\item \label{It:Cor:DD:4}
Claim: $\Pi$ is self-adjoint. We have proved that the operator $\Pi$ is already closed in Corollary~\ref{Cor:PiClosed}. 
Clearly, $\Pi$ is symmetric, so it suffices to prove that $\dom(\Pi^\ast) \subset \dom(\Pi)$.
For that, let $u \in \dom(\Pi^\ast)$. 
That means, for every $v \in \dom(\Pi)$, 
\begin{equation} 
\label{Eq:PiAdj}
\modulus{\inprod{u, \Pi v}} \lesssim \norm{v}.
\end{equation}

To show that $u \in \dom(\Pi)$, we need to show that $u \in \dom(\Gamma^\ast)$ and that $u \in \dom(\Gamma)$.
For that, we show that whenever $w \in \dom(\Gamma^\ast)$, $\modulus{\inprod{u, \Gamma^\ast w}} \lesssim \norm{w}$ establishing $u \in \dom(\Gamma)$ and similarly, $\modulus{\inprod{u,\Gamma x}} \lesssim \norm{x}$ establishing that $u \in \dom(\Gamma^\ast)$. Now, let $w \in \dom(\Gamma^\ast)$.
With respect to the Hodge-decomposition, we write $w = w_0 + w_1 + w_2 \in \ker(\Pi) \oplus \close{\ran(\Gamma)} \oplus \close{\ran(\Gamma^\ast)}$.
Note that $w_1 \in \dom(\Gamma^\ast)$ since $\ker(\Pi) \subset \dom(\Gamma^\ast)$ and $\close{\ran(\Gamma^\ast)} \subset \ker(\Gamma^\ast) \subset \dom(\Gamma^\ast)$.
Clearly, $\Gamma^\ast w = \Gamma^\ast w_1$. But note that by nilpotency of $\Gamma$, $w_1 \in \ker(\Gamma) \subset \dom(\Gamma)$ and since $w_1 \in 
\dom(\Gamma^\ast)$ by assumption, we have that $w_1 \in \dom(\Pi)$.
Then,  $\Pi w_1 = \Gamma^\ast w_1 + \Gamma w_1 = \Gamma^\ast w_1$, again since $w_1 \in \close{\ran(\Gamma)} \subset \ker(\Gamma)$.
Therefore on combining these equalities and using Equation~\eqref{Eq:PiAdj} with $v = w_1$, we obtain that 
$$ \modulus{\inprod{u, \Gamma^\ast w}} 
= \modulus{\inprod{u, \Gamma^\ast w_1}} = \modulus{\inprod{u, \Pi w_1}} 
\lesssim \norm{w_1} 
= \norm{\Proj{\close{\ran(\Gamma)}, \ker(\Pi) \oplus \close{\ran(\Gamma^\ast)}} w} 
\lesssim \norm{w}.
$$
This shows that  that $u \in \dom(\Gamma)$. By similar reasoning, interchanging the role of $\Gamma$ in place of $\Gamma^\ast$ and $w_2$ in place of $w_1$, we obtain that $u \in \dom(\Gamma^\ast)$. 
That is, $u \in \dom(\Pi)$ and so this shows $\dom(\Pi^\ast) \subset \dom(\Pi)$.

\item Claim:  $\close{\ran(\Pi)} = \close{\ran(\Gamma^\ast)} \oplus \close{\ran(\Gamma)}$. From the self-adjointness of $\Pi$ that we established in \ref{It:Cor:DD:4}, $\Hil = \ker(\Pi) \stackrel{\perp}{\oplus} \close{\ran(\Pi)}$.
Since $\ker(\Pi)^\perp  = \close{\ran(\Gamma)} \oplus \close{\ran(\Gamma^\ast)}$, the conclusion follows.
\qedhere 
\end{enumerate} 
\end{proof}

\begin{remark}
Note that from \ref{It:Cor:DD:2} in the proof of Proposition~\ref{Prop:DenselyDefined}, we can actually assert that $\dom(\Pi) \cap \close{\ran(\Gamma)}$ is dense in $\close{\ran(\Gamma)}$.
This readily follows from the alternate calculation
$$\close{\ran(\Gamma)} \cap \dom(\Gamma^\ast) \subset \close{\ran(\Gamma)} \cap \dom(\Gamma) \cap \dom(\Gamma^\ast) \subset \ran(\Gamma) \cap \dom(\Pi).$$
Similarly, $\close{\ran(\Gamma^\ast)} \cap \dom(\Pi)$ is dense in $\close{\ran(\Gamma^\ast)}$. 
\end{remark}

\subsection{Changes of metric}

Let $B: \Hil \to \Hil$ be an self-adjoint isomorphism of $\Hil$.
With respect to $B$, define 
$$ \inprod{u,v}_{B} = \inprod{Bu,v}.$$
We can now consider the relationship between $\Pi =  \Gamma + \Gamma^\ast$ where the adjoint is taken with respect to $\inprod{\cdot,\cdot}$ and the operator $\Pi_{B} = \Gamma + \Gamma^{\ast,B}$, where here, the inner product is taken with respect to $\inprod{\cdot,\cdot}_B$. 
Note that this is equivalent to studying a change of metric. Starting with another inner product $\inprod{ \cdot,\cdot}'$ for $\Hil$, by considering this as a Hermitian symmetric form, we obtain from the first and second representation theorems in Chapter IV, \S2 in \cite{Kato}, a bounded self-adjoint isomorphism $B: \Hil \to \Hil$ such that $\inprod{u,v}'  = \inprod{Bu,v}$.

The results here are motivated by similar results in \cite{AKMc}, except for results from Proposition~\ref{Prop:SpaSum} onward. 
These are new results that we establish in this paper and are key to the geometric applications we have presented in main part of this paper.

\begin{proposition}\label{Prop:HilAdjChange} The operator $\Gamma^{\ast,B} = B^{-1} \Gamma^\ast B$  and $\dom(\Gamma^{\ast,B}) = B^{-1} \dom(\Gamma^\ast)$. 
Moreover, $\Pi_{B} = \Gamma + B^{-1} \Gamma^\ast B$.  
\end{proposition}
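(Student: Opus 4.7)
The plan is to unwind the definition of the $B$-adjoint directly, translating each instance of $\inprod{\cdot,\cdot}_B$ back to $\inprod{\cdot,\cdot}$ via the identity $\inprod{u,v}_B = \inprod{Bu,v}$ and the self-adjointness of $B$ with respect to $\inprod{\cdot,\cdot}$. Concretely, $v \in \dom(\Gamma^{\ast,B})$ precisely when there exists some $w \in \Hil$ such that $\inprod{\Gamma u, v}_B = \inprod{u,w}_B$ for every $u \in \dom(\Gamma)$, in which case $\Gamma^{\ast,B} v = w$.

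First I would compute $\inprod{\Gamma u, v}_B = \inprod{B \Gamma u, v} = \inprod{\Gamma u, B v}$, using that $B$ is self-adjoint with respect to $\inprod{\cdot,\cdot}$. Similarly, $\inprod{u, w}_B = \inprod{Bu, w} = \inprod{u, Bw}$. Thus the defining identity for the $B$-adjoint becomes $\inprod{\Gamma u, Bv} = \inprod{u, Bw}$ for all $u \in \dom(\Gamma)$, which is the defining identity for $Bv \in \dom(\Gamma^\ast)$ with $\Gamma^\ast(Bv) = Bw$. Since $B$ is an isomorphism and $Bw$ uniquely determines $w$, this equivalence reads $v \in \dom(\Gamma^{\ast,B})$ if and only if $v \in B^{-1}\dom(\Gamma^\ast)$, and then $w = B^{-1} \Gamma^\ast B v$, giving both claimed identities for $\Gamma^{\ast,B}$ and its domain.

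For the last statement, I would simply observe $\dom(\Pi_B) = \dom(\Gamma) \cap \dom(\Gamma^{\ast,B}) = \dom(\Gamma) \cap B^{-1} \dom(\Gamma^\ast)$, on which $\Pi_B u = \Gamma u + \Gamma^{\ast,B} u = \Gamma u + B^{-1} \Gamma^\ast B u$ by the first part. There is no real obstacle here: the only point requiring care is the self-adjointness of $B$ (which is invoked twice to move it across the inner product) and the invertibility of $B$ (used to reidentify the target $w$ from $Bw$); both are built into our hypothesis that $B$ is a self-adjoint isomorphism of $\Hil$.
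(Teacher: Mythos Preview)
Your proposal is correct and follows essentially the same approach as the paper: both arguments unwind the defining identity for the $B$-adjoint, use self-adjointness of $B$ to move it across $\inprod{\cdot,\cdot}$, and invertibility of $B$ to conclude. Your presentation, phrasing the argument as an ``if and only if'' via the witness $w$, is if anything slightly cleaner than the paper's, which shows one inclusion and then remarks that the reverse containment follows by running the computation backwards.
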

\begin{proof}
Let $v \in \dom(\Gamma^{\ast, B})$.
Fixing $u \in \dom(\Gamma)$, we have 
$$ \inprod{u, \Gamma^{\ast,B}v}_B =  \inprod{\Gamma u, v}_B =  \inprod{B \Gamma u, v} = \inprod{ \Gamma u, B v}.$$
This shows that $B v \in \dom(\Gamma^\ast)$ and then,
$$
\inprod{u, \Gamma^{\ast,B}v}_B = \inprod{u , \Gamma^\ast B} = \inprod{B B^{-1} u, \Gamma^\ast B v} = \inprod{u, B^{-1} \Gamma^{\ast} B v}_B.$$
This shows that $\dom(\Gamma^{\ast,B}) \subset \dom(B^{-1} \Gamma^\ast B)$. Conversely, taking $v \in \dom(B^{-1} \Gamma^\ast B)$, we run the  calculation above  in reverse to find that $\dom(\Gamma^{\ast,B}) \supset \dom(B^{-1} \Gamma^\ast B)$. With this, and since $B$ is an isomorphism, it follows that $\dom(\Gamma^{\ast,B}) = \dom(\Gamma^{\ast,B}) = B^{-1} \dom(\Gamma^{\ast})$.
\end{proof}

\begin{proposition}
\label{Prop:HilKerRan}
The space $\ran(\Gamma^{\ast,B}) = B^{-1} \ran(\Gamma^{\ast})$ and  $\ker(\Gamma^{\ast,B}) = \ker(\Gamma^{\ast}B)$. 
Therefore, 
$$
\Hil = \ker(\Gamma) \cap \ker(\Gamma^{\ast} B) \oplus \close{\ran(\Gamma)} \oplus B^{-1} \close{\ran(\Gamma^{\ast})}.$$
\end{proposition}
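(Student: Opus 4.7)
My plan is to break the statement into three pieces: the range identity, the kernel identity, and the decomposition, handling them in this order and leveraging Proposition~\ref{Prop:HilAdjChange} throughout.

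For the range, I will use $\Gamma^{\ast,B} = B^{-1}\Gamma^{\ast}B$ with $\dom(\Gamma^{\ast,B}) = B^{-1}\dom(\Gamma^{\ast})$. Since $B$ is an isomorphism of $\Hil$, the map $u \mapsto Bu$ carries $B^{-1}\dom(\Gamma^{\ast})$ bijectively onto $\dom(\Gamma^{\ast})$, so
\[
\ran(\Gamma^{\ast,B}) = \set{B^{-1}\Gamma^{\ast}Bu : u \in B^{-1}\dom(\Gamma^{\ast})} = B^{-1}\set{\Gamma^{\ast}w : w \in \dom(\Gamma^{\ast})} = B^{-1}\ran(\Gamma^{\ast}).
\]
For the kernel, $\Gamma^{\ast,B}u = 0$ is equivalent to $B^{-1}\Gamma^{\ast}Bu = 0$, and since $B^{-1}$ is injective, this is equivalent to $\Gamma^{\ast}Bu = 0$, giving $\ker(\Gamma^{\ast,B}) = \ker(\Gamma^{\ast}B)$.

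For the decomposition, I will apply Proposition~\ref{Prop:HodgeDecomp'} with respect to the inner product $\inprod{\cdot,\cdot}_{B}$. The key observation is that $\Gamma$ is still closed, densely-defined, and nilpotent with respect to $\inprod{\cdot,\cdot}_B$: nilpotency is algebraic so is unaffected, and since $B$ is a bounded invertible self-adjoint positive operator (up to noting this is the setting of interest, else one can just use that $\norm{\cdot}_B$ and $\norm{\cdot}$ are equivalent norms since $B$ and $B^{-1}$ are bounded), closedness and density are preserved. Thus Proposition~\ref{Prop:HodgeDecomp'} yields
\[
\Hil = \ker(\Gamma) \cap \ker(\Gamma^{\ast,B}) \oplus \close{\ran(\Gamma)} \oplus \close{\ran(\Gamma^{\ast,B})},
\]
where the sum is $\inprod{\cdot,\cdot}_B$-orthogonal, hence topological in $\Hil$. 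Substituting the range and kernel identities just proved, and using that $B^{-1}:\Hil\to\Hil$ is a homeomorphism so $\close{B^{-1}\ran(\Gamma^{\ast})} = B^{-1}\close{\ran(\Gamma^{\ast})}$, we obtain
\[
\Hil = \ker(\Gamma) \cap \ker(\Gamma^{\ast}B) \oplus \close{\ran(\Gamma)} \oplus B^{-1}\close{\ran(\Gamma^{\ast})}.
\]

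There is no real obstacle here beyond bookkeeping; the mildly subtle point is being explicit that the Hodge decomposition of Proposition~\ref{Prop:HodgeDecomp'} applies to $\Gamma$ in the $B$-inner product (since $\Gamma$ itself does not change and its closedness and density of domain are preserved under passage to an equivalent norm), and that commuting closure past $B^{-1}$ is legitimate because $B^{-1}$ is a bicontinuous bijection of $\Hil$ onto itself. The resulting sum is topological but no longer orthogonal in $\inprod{\cdot,\cdot}$, consistent with the statement.
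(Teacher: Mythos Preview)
Your proof is correct and follows essentially the same route as the paper: both use Proposition~\ref{Prop:HilAdjChange} to rewrite $\Gamma^{\ast,B}=B^{-1}\Gamma^{\ast}B$, read off the range and kernel identities from the fact that $B$ is a bijection, and then apply the abstract Hodge decomposition (Proposition~\ref{Prop:HodgeDecomp'}) to $\Gamma$ in the $\inprod{\cdot,\cdot}_B$ inner product, finally commuting closure past $B^{-1}$ via its bicontinuity. The only cosmetic difference is that you justify the applicability of Proposition~\ref{Prop:HodgeDecomp'} in the new inner product a bit more explicitly (equivalence of norms preserving closedness and density), which is a good addition.
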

\begin{proof}
Let $u \in \ran(\Gamma^{\ast,B})$.
Therefore, we have a $v \in \dom(\Gamma^{\ast,B})$ with $u = \Gamma^{\ast,B} v= B^{-1} \Gamma^{\ast} Bv = B^{-1} \Gamma^{\ast} w$, where $w = Bv \in \dom(\Gamma^{\ast})$.
This shows $\ran(\Gamma^{\ast,B}) \subset B^{-1} \ran(\Gamma^{\ast})$.
For the opposite inclusion, let $u \in B^{-1} \ran(\Gamma^{\ast})$.
That means there exists $v \in \dom(\Gamma^{\ast})$ such that  
$$u = B^{-1} \Gamma^{\ast} v =  B^{-1} \Gamma^{\ast} B (B^{-1} v) =  \Gamma^{\ast,B} (B^{-1}v).$$
This shows that $u \in \ran(\Gamma^{\ast,B})$. The kernel equality follows from $\ker(\Gamma^{\ast,B}) = \ker(B^{-1} \Gamma^{\ast}B) = \ker(\Gamma^{\ast}B)$. The Hodge-decomposition 
$$\Hil = \ker(\Gamma) \cap \ker(\Gamma^{\ast,B}) \stackrel{\perp,B}{\oplus} \close{\ran(\Gamma)} \stackrel{\perp,B}{\oplus} \close{\ran(\Gamma^{\ast,B})}$$
follows from the abstract theory, c.f. Proposition~\ref{Prop:HodgeDecomp} and the version in the conclusion simply follows by combining the facts we have established in this proof.
In particular, $\ker(\Gamma_{B}) = \ker(\Gamma) \cap \ker(\Gamma^{\ast,B})$ and $\close{\ran(\Gamma_{B})} = \close{\ran(\Gamma)} \oplus \close{\ran(\Gamma^{B,\ast})}.$
Since $B$ and $B^{-1}$ are an isomorphisms, it is clear we have that $\close{B^{-1} \ran(\Gamma^{\ast,B})} = B^{-1} \close{\ran(\Gamma^{\ast})}$.
\end{proof} 

The results from here on are new and lie at the heart of the geometric applications that we have presented earlier in the paper. Let us now consider the following space: 
$$ \Spa := \ker(\Pi_B) + \close{\ran(\Pi)}.$$
\begin{proposition}
\label{Prop:SpaSum}
We have that 
$$ \Spa = \ker(\Gamma) \cap \ker(\Gamma^{\ast} B) \oplus \close{\ran(\Gamma)} \oplus \close{\ran(\Gamma^\ast)} = \ker(\Pi_B) \oplus \close{\ran(\Pi)}.$$
\end{proposition}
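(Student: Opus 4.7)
The plan is to exploit two Hodge decompositions, the orthogonal decomposition with respect to $\inprod{\cdot,\cdot}$ from Proposition~\ref{Prop:HodgeDecomp'} and the decomposition with respect to $\inprod{\cdot,\cdot}_B$ from Proposition~\ref{Prop:HilKerRan}, along with the structural identifications of $\ker(\Pi_B)$ and $\close{\ran(\Pi)}$. First I would use Proposition~\ref{Prop:HilKerRan} (applied to the pair $(\Gamma,\Gamma^{\ast,B})$) together with Corollary~\ref{Cor:KerPi} to write $\ker(\Pi_B) = \ker(\Gamma) \cap \ker(\Gamma^{\ast,B}) = \ker(\Gamma)\cap\ker(\Gamma^{\ast}B)$, and then invoke Proposition~\ref{Prop:DenselyDefined} for $\Pi$ to obtain $\close{\ran(\Pi)} = \close{\ran(\Gamma)}\oplus\close{\ran(\Gamma^{\ast})}$, the latter being an orthogonal sum in $\inprod{\cdot,\cdot}$. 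Substituting these two identifications into $\Spa$ immediately yields the outer equality of the displayed chain, modulo verifying that the resulting sum is direct.

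The core task is therefore to show $\ker(\Pi_B)\cap\close{\ran(\Pi)} = \set{0}$, which will simultaneously pin down the triple sum as direct. Suppose $v$ lies in this intersection. Using the orthogonal Hodge decomposition of $\close{\ran(\Pi)}$, write $v = v_1 + v_2$ with $v_1 \in \close{\ran(\Gamma)}$ and $v_2 \in \close{\ran(\Gamma^{\ast})}$. Since $v \in \ker(\Gamma)$ and nilpotency gives $v_1 \in \close{\ran(\Gamma)} \subset \ker(\Gamma)$, we deduce $\Gamma v_2 = 0$; but $\ker(\Gamma) \cap \close{\ran(\Gamma^\ast)} = \set{0}$ by Proposition~\ref{Prop:HodgeDecomp'}, so $v_2 = 0$ and hence $v = v_1 \in \close{\ran(\Gamma)}$.

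The decisive step is to now use the other kernel condition, $\Gamma^{\ast}B v = 0$: this says $Bv \in \ker(\Gamma^{\ast}) = \close{\ran(\Gamma)}^{\perp}$, and in particular $\inprod{Bv,v} = 0$. Since $B$ is positive-definite (being the representing operator of an inner product), this forces $v = 0$. The main obstacle in this argument is precisely this last step, where the change-of-metric operator $B$ mixes the two geometries: without the positivity of $B$ there is no reason to expect $\ker(\Pi_B)$ to meet $\close{\ran(\Pi)}$ trivially, even though $\close{\ran(\Pi_B)}$ and $\close{\ran(\Pi)}$ are \emph{different} closed subspaces. Once the intersection is trivial, both displayed equalities follow: $\Spa = \ker(\Pi_B) \oplus \close{\ran(\Pi)}$, and substituting the two identifications produces the triple direct-sum expression (whose pairwise directness also follows from the same orthogonality arguments and positivity of $B$).
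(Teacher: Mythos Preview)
Your proposal is correct and follows essentially the same route as the paper: decompose an element of $\ker(\Pi_B)\cap\close{\ran(\Pi)}$ along $\close{\ran(\Gamma)}\oplus\close{\ran(\Gamma^\ast)}$, use $\ker(\Gamma)$ and nilpotency to eliminate the $\close{\ran(\Gamma^\ast)}$-component, and then use the $\ker(\Gamma^{\ast}B)$ condition to eliminate the remaining $\close{\ran(\Gamma)}$-component. The only cosmetic difference is in this last step: the paper invokes the $B$-orthogonal Hodge decomposition (so that $\ker(\Pi_B)\cap\close{\ran(\Gamma)}=0$ directly), whereas you argue via $\inprod{Bv,v}=0$ and positive-definiteness of $B$; these are two phrasings of the same fact.
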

\begin{proof}
First, we show that the subspaces $\ker(\Gamma) \cap \ker(\Gamma^{\ast} B)$, $\close{\ran(\Gamma)}$ and $\close{\ran(\Gamma^\ast)}$ are mutually complementary. 
For that, note $\ker(\Gamma) \cap \ker(\Gamma^{\ast} B) \cap  \close{\ran(\Gamma^\ast)}  \subset \ker(\Gamma) \cap   \close{\ran(\Gamma^\ast)} = 0$  and $\close{\ran(\Gamma)} \cap \close{\ran(\Gamma^\ast)} = 0$ from Proposition~\ref{Prop:HodgeDecomp}.
By applying Proposition~\ref{Prop:HodgeDecomp} with $\inprod{\cdot,\cdot}_B$, we obtain that $\ker(\Gamma) \cap \ker(\Gamma^{\ast} B) \cap  \close{\ran(\Gamma)} = 0$. Lastly, we show $\close{\ran(\Pi)} \cap \ker(\Gamma) \cap \ker(\Gamma^\ast B) = 0$.
Let $y = u + v \in \ker(\Gamma) \cap \ker(\Gamma^{\ast} B) \cap ( \close{\ran(\Gamma)} \oplus \close{\ran(\Gamma^\ast)})$.
Since $u \in \close{\ran(\Gamma)} \subset \ker(\Gamma)$, we have that $v = y - u \in \ker(\Gamma)$. 
Moreover, since $v \in \close{\ran(\Gamma^\ast)}$, we obtain that $v \in \ker(\Gamma) \cap \ker(\Gamma^\ast) \cap \close{\ran(\Gamma^\ast)} = 0$ by Proposition~\ref{Prop:HodgeDecomp}.
Therefore $y = u \in \ker(\Gamma) \cap \ker(\Gamma^{\ast,B}) \cap \close{\ran(\Gamma)} = 0$ again by Proposition~\ref{Prop:HodgeDecomp}.
\end{proof} 

Now we show that $\Spa$ is indeed the entire space $\Hil$.

\begin{proposition}
\label{Prop:SpaTot}  
$\Spa = \ker(\Pi_B) \oplus \close{\ran(\Pi)} = \Hil$.
\end{proposition}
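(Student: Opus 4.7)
The plan is to reduce everything to the two Hodge decompositions already in our toolbox: the standard one from Proposition~\ref{Prop:HodgeDecomp'} and its $B$-inner product analogue. Since Proposition~\ref{Prop:HodgeDecomp'} gives $\Hil = \ker(\Pi) \oplus \close{\ran(\Gamma)} \oplus \close{\ran(\Gamma^\ast)}$, it suffices to prove that each of these three summands lies in $\Spa$. The inclusions $\close{\ran(\Gamma)} \subset \Spa$ and $\close{\ran(\Gamma^\ast)} \subset \Spa$ are immediate from Proposition~\ref{Prop:DenselyDefined}, which identifies $\close{\ran(\Pi)} = \close{\ran(\Gamma)} \stackrel{\perp}{\oplus} \close{\ran(\Gamma^\ast)}$, so both pieces already sit inside $\close{\ran(\Pi)} \subset \Spa$.

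The only nontrivial inclusion is $\ker(\Pi) \subset \Spa$. For this, I will re-apply Corollary~\ref{Cor:CohomGamma} with respect to the inner product $\inprod{\cdot,\cdot}_B$. Since $B$ is a bounded self-adjoint isomorphism, $\inprod{\cdot,\cdot}_B$ induces a norm equivalent to the original one, so $(\Hil,\inprod{\cdot,\cdot}_B)$ is again a Hilbert space with the same underlying topology. In particular, $\Gamma$ remains closed, densely-defined and nilpotent in this new inner product, its Hilbert-space adjoint is precisely $\Gamma^{\ast,B}$ by Proposition~\ref{Prop:HilAdjChange}, and the Hodge-Dirac-type operator is $\Pi_B$. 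Hence Corollary~\ref{Cor:CohomGamma} yields the topological direct sum
\[
\ker(\Gamma) = \ker(\Pi_B) \oplus \close{\ran(\Gamma)}.
\]

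With this decomposition in hand, the argument finishes quickly: any $v \in \ker(\Pi) = \ker(\Gamma)\cap\ker(\Gamma^\ast) \subset \ker(\Gamma)$ splits as $v = y + r$ with $y \in \ker(\Pi_B)$ and $r \in \close{\ran(\Gamma)}$. Trivially $y \in \ker(\Pi_B) \subset \Spa$ and $r \in \close{\ran(\Gamma)} \subset \close{\ran(\Pi)} \subset \Spa$, so $v \in \Spa$. Combined with the two easy inclusions above, this gives $\Hil \subset \Spa$, and the reverse containment is automatic. The only place where one might worry is whether Corollary~\ref{Cor:CohomGamma} genuinely transfers to the $B$-inner product setting, but since that corollary is a purely Hilbert-space statement about a closed nilpotent operator, it applies verbatim in $(\Hil,\inprod{\cdot,\cdot}_B)$; the directness of the $B$-sum is then automatically a directness in the original topology because the two norms are equivalent.
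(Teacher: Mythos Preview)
Your argument is correct and takes a genuinely different route from the paper's own proof. The paper proceeds by showing that the $B$-orthogonal complement $\Spa^{\perp,B}$ is trivial: given $y \in \Spa^{\perp,B}$, it deduces from $y \perp_B \close{\ran(\Pi)}$ that $By \in \ker(\Pi)$, and from $y \perp_B \ker(\Pi_B)$ that $y \in \close{\ran(\Pi_B)} = \close{\ran(\Gamma)} \oplus B^{-1}\close{\ran(\Gamma^\ast)}$, then chases these constraints through the two Hodge decompositions to force $y = 0$. Your approach instead directly exhibits every element of $\Hil$ as a sum in $\Spa$, by applying Corollary~\ref{Cor:CohomGamma} in the $B$-inner product to obtain $\ker(\Gamma) = \ker(\Pi_B) \oplus \close{\ran(\Gamma)}$ and observing that $\ker(\Pi) \subset \ker(\Gamma)$ while $\close{\ran(\Gamma)} \subset \close{\ran(\Pi)}$. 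This is shorter and more conceptual; in fact the paper itself acknowledges exactly this alternative in Remark~\ref{Rmk:AbsHodgeType}, noting that the identity $\ker(\Pi_B) \oplus \close{\ran(\Gamma)} = \ker(\Pi) \oplus \close{\ran(\Gamma)}$ already yields the desired isomorphism of kernels. What the paper's longer argument buys is that it avoids re-invoking the abstract Hodge theory in the second inner product and instead works entirely with concrete orthogonality computations, which may be viewed as more self-contained within the subsection; but your route is logically cleaner.
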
 
\begin{proof} 
Applying Corollary~\ref{Cor:CohomGamma} to $\Pi_{B}$ with respect to $\inprod{\cdot,\cdot}_{B}$ yields $\ker(\Gamma) = \ker(\Pi_{B}) \oplus \close{\ran(\Gamma)}$.
Therefore, 
\begin{align*} 
\Spa 
= 
\ker(\Pi_B) \oplus \close{\ran(\Pi)} 
= 
\ker(\Pi_{B}) \oplus \close{\ran(\Gamma)} \oplus \close{\ran(\Gamma^{\ast})} 
=
\ker(\Gamma) \oplus \close{\ran(\Gamma^{\ast})}
= 
\Hil,
\end{align*}
where the last equality follows from the splitting of $\Hil$ via $\Gamma$ and its adjoint $\Gamma^{\ast}$ with respect to $\inprod{\cdot,\cdot}$.
\end{proof} 

This immediately leads us to the following Hodge-type theorem in this abstract setting.

\begin{theorem}[Abstract Hodge-type theorem in Hilbert spaces]
\label{Thm:AbsHodgeType}
The space $\ker(\Pi_B) \cong \ker(\Pi)$.
The isomorphism $\Phi: \ker(\Pi) \to \ker(\Pi_B)$ is precisely given by 
$$\Phi := \Proj{\ker(\Pi_B) \oplus \close{\ran(\Pi)}}\rest{\ker(\Pi)}: \ker(\Pi) \to \ker(\Pi_B),$$
where $\Proj{\ker(\Pi_B) \oplus \close{\ran(\Pi)}}$ is the projection to $\ker(\Pi_B)$ along $\close{\ran(\Pi)}$.
The inverse is then given by
$$ \Phi^{-1} = \Proj{\ker(\Pi) \oplus \close{\ran(\Pi)}}\rest{\ker(\Pi_B)}: \ker(\Pi_B) \to \ker(\Pi).$$
\end{theorem}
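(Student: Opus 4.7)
The plan is to combine the two direct-sum decompositions of $\Hil$ already established in the preceding propositions, and then observe that the oblique projection along their common complement is precisely the claimed isomorphism. The analytic substance has all been absorbed into Propositions~\ref{Prop:SpaSum}, \ref{Prop:SpaTot}, and \ref{Prop:DenselyDefined}, so what remains is essentially a linear-algebraic assembly.

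First I would record the two key decompositions. Combining Proposition~\ref{Prop:SpaSum} with Proposition~\ref{Prop:SpaTot} yields the topological (but generally non-orthogonal) splitting
\[
\Hil = \ker(\Pi_B) \oplus \close{\ran(\Pi)},
\]
while self-adjointness of $\Pi$ from Proposition~\ref{Prop:DenselyDefined} gives the orthogonal splitting
\[
\Hil = \ker(\Pi) \stackrel{\perp}{\oplus} \close{\ran(\Pi)}.
\]
Thus $\ker(\Pi)$ and $\ker(\Pi_B)$ are both closed complements of the common closed subspace $\close{\ran(\Pi)}$.

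Next I would verify that $\Phi := \Proj{\ker(\Pi_B), \close{\ran(\Pi)}}\rest{\ker(\Pi)}$ is a bijection onto $\ker(\Pi_B)$. For injectivity, if $u \in \ker(\Pi)$ satisfies $\Phi(u) = 0$, then $u \in \close{\ran(\Pi)}$; since $u$ is simultaneously orthogonal to $\close{\ran(\Pi)}$, it must vanish. For surjectivity, given $v \in \ker(\Pi_B)$, I would decompose $v = u + w$ orthogonally with $u \in \ker(\Pi)$ and $w \in \close{\ran(\Pi)}$; then applying the oblique projection yields $\Phi(u) = \Proj{\ker(\Pi_B), \close{\ran(\Pi)}} v = v$, since $v$ already lies in $\ker(\Pi_B)$. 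Boundedness is automatic, as projections associated with topological direct sums of closed subspaces are continuous, and the open mapping theorem then upgrades $\Phi$ to a topological isomorphism. The same reasoning with the roles of the two kernels swapped produces the candidate inverse $\Proj{\ker(\Pi), \close{\ran(\Pi)}}\rest{\ker(\Pi_B)}$, and the uniqueness of the splitting along the shared summand $\close{\ran(\Pi)}$ forces this map to coincide with $\Phi^{-1}$.

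I do not expect a genuine obstacle. The only subtlety worth flagging is that the $\ker(\Pi_B)$ decomposition is not orthogonal with respect to $\inprod{\cdot,\cdot}$, so boundedness of the oblique projection cannot be obtained by the Pythagorean identity and must instead be deduced from the closedness of the summands via the closed graph theorem (or, via Proposition~\ref{Prop:SpaTot}, by transporting orthogonality from $\inprod{\cdot,\cdot}_B$ through the bounded isomorphism $B$). Once this minor point is checked, the four bullets of the theorem follow immediately from the two decompositions displayed above.
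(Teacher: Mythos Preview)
Your proposal is correct and follows essentially the same approach as the paper: both obtain the two decompositions $\Hil = \ker(\Pi) \oplus \close{\ran(\Pi)} = \ker(\Pi_B) \oplus \close{\ran(\Pi)}$ from Propositions~\ref{Prop:DenselyDefined}, \ref{Prop:SpaSum}, and \ref{Prop:SpaTot}, and then conclude via the elementary fact that two closed complements of a common closed subspace are isomorphic through the oblique projection. The only cosmetic difference is that the paper isolates this last step as Lemma~\ref{Lem:SubIsom} and simply invokes it, whereas you spell out the injectivity/surjectivity/open-mapping argument inline; the content is identical.
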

\begin{proof}
Recall that since $\Pi$ is self-adjoint, we obtain the splitting 
$$ \Hil = \ker(\Pi) \oplus \close{\ran(\Pi)}.$$ 
Moreover, from Proposition~\ref{Prop:SpaSum} and Proposition~\ref{Prop:SpaTot}, we find
$$ \Hil = \ker(\Pi_B) \oplus \close{\ran(\Pi)}.$$
We then simply invoke Lemma~\ref{Lem:SubIsom} with $\cB = \Hil$, $\cB_1' = \ker(\Pi_B)$, $\cB_2 = \close{\ran(\Pi)}$ and $\cB_1' = \ker(\Pi)$.
To compute $\Phi^{-1}$, we repeat the argument with $\cB_1$ and $\cB_1'$ interchanged, and use the uniqueness of the inverse.
\end{proof}

\begin{remark}
\label{Rmk:AbsHodgeType}
We can equally well argue this using Corollary~\ref{Cor:CohomGamma}. 
Here, we have that $\ker(\Pi_{B}) \oplus \close{\ran(\Gamma)} = \ker(\Pi) \oplus \close{\ran(\Gamma)}$, and therefore, the isomorphism would be given by
\[
\Proj{\ker(\Pi_{B}), \close{\ran(\Gamma)}}\rest{\ker(\Pi)}: \ker(\Pi) \to \ker(\Pi_{B}).
\]
Moreover, we see that $\Hom(\Gamma) \cong \ker(\Pi_{B}) \cong \ker(\Pi)$.
Nevertheless, the value of the splitting in Theorem~\ref{Thm:AbsHodgeType} will become apparent in the following section along the projectors arising there will become apparent in the following section.   
\end{remark}

\subsection{Splittings}
Modelling on the fact that differential forms are graded, we study the situation in which the Hilbert space splits and where a power of an operator respects this splitting.
For that, suppose $\Hil = \Hil_0 \oplus \Hil_1$. 
Given an operator $\Xi: \Hil \to \Hil$, it is not necessarily the case that $\Xi\rest{\Hil_i}$ restricts to an operator $\Hil_i \to \Hil_i$.
However, assuming that it does, we obtain requisite properties to study the situation where $\Xi$ is a factional power of $\modulus{\Pi}$ which respects this splitting.
With this, we prove more refined version of Theorem~\ref{Thm:AbsHodgeType} to obtain an isomorphism at the level of $\Hil_i$. 
This is modelled on the Hodge-Dirac operator $\Dir = \extd + \extd^\ast$, which itself does not preserve $k$-forms, but its square, the Hodge-Laplacian $\Dir^2$, does by  mapping $k$-forms to $k$-forms.
In what is to follow, we say that $[\Proj{\Hil_0,\Hil_1}, \Xi] = 0$   if whenever $u \in \dom(\Xi)$ we have   $\Proj{\Hil_0,\Hil_1}u \in \dom(\Xi)$ and $[\Proj{\Hil_0,\Hil_1}, \Xi]u = 0$.

\begin{lemma}
\label{Lem:SplitMap}
Suppose that $[\Proj{\Hil_0,\Hil_1}, \Xi] = 0$.
Then,  $\Xi\rest{\Hil_0}: \dom(\Xi) \cap \Hil_0 \to \ran(\Xi) \cap \Hil_0$.
Moreover, $\ran(\Xi\rest{\Hil_0}) =  \ran(\Xi) \cap \Hil_0$.
Similar statements hold with $\Hil_0$ replaced by $\Hil_1$.
\end{lemma}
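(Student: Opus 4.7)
The plan is to unpack the commutator hypothesis and apply it in two directions. Write $P := \Proj{\Hil_0,\Hil_1}$, so that $P$ is the (possibly non-orthogonal) projection with $\ran(P) = \Hil_0$ and $\ker(P) = \Hil_1$. The hypothesis $[P,\Xi] = 0$ says that $P\colon \dom(\Xi) \to \dom(\Xi)$ and $P\Xi u = \Xi Pu$ for every $u \in \dom(\Xi)$. From this single identity, the two claims will follow by short direct arguments.

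First I would verify that $\Xi\rest{\Hil_0}$ actually lands in $\Hil_0$. For $u \in \dom(\Xi) \cap \Hil_0$, we have $Pu = u$ since $u \in \Hil_0 = \ran(P)$ and $P$ is idempotent. Then by the commutator hypothesis, $P(\Xi u) = \Xi(Pu) = \Xi u$, which forces $\Xi u \in \Hil_0$. Combined with the obvious $\Xi u \in \ran(\Xi)$, this gives $\Xi u \in \ran(\Xi) \cap \Hil_0$, establishing the first claim.

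For the second claim, the inclusion $\ran(\Xi\rest{\Hil_0}) \subset \ran(\Xi) \cap \Hil_0$ is immediate from the first step. For the reverse inclusion, take $v \in \ran(\Xi) \cap \Hil_0$. Then $v = \Xi u$ for some $u \in \dom(\Xi)$. Split $u = Pu + (I-P)u$. By the hypothesis, $Pu \in \dom(\Xi) \cap \Hil_0$ and $\Xi(Pu) = P(\Xi u) = Pv$. Since $v \in \Hil_0$, we have $Pv = v$, hence $v = \Xi(Pu) \in \ran(\Xi\rest{\Hil_0})$, completing the reverse inclusion.

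Finally, the symmetric statements for $\Hil_1$ follow by the same argument with $P$ replaced by $I - P = \Proj{\Hil_1,\Hil_0}$, noting that $[I-P,\Xi] = -[P,\Xi] = 0$, so the hypothesis is preserved on interchanging the roles. I do not anticipate any genuine obstacle here; the only point requiring a moment's care is interpreting $[P,\Xi] = 0$ as simultaneously asserting invariance of $\dom(\Xi)$ under $P$ and the algebraic commutation, which is exactly what the paper has already specified prior to the lemma.
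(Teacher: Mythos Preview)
Your proof is correct and follows essentially the same approach as the paper: both arguments use $u = Pu$ and the commutator hypothesis to show $\Xi u = P\Xi u \in \Hil_0$ for the first claim, and for the reverse inclusion both write $v = Pv = P\Xi u = \Xi Pu$ with $Pu \in \dom(\Xi) \cap \Hil_0$. Your explicit remark that the $\Hil_1$ case follows from $[I-P,\Xi] = 0$ is a nice touch the paper leaves implicit.
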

\begin{proof} 
Fix $u \in \dom(\Xi) \cap \Hil_0$, so we can write $u = \Proj{\Hil_0,\Hil_1}u$.
Then, 
\[ 
\Xi\rest{\Hil_0}u = \Xi u = \Xi \Proj{\Hil_0,\Hil_1} u =  \Proj{\Hil_0,\Hil_1} \Xi u \in \Hil_0.
\]
Therefore, we see that $\Xi\rest{\Hil_0} u \in \ran(\Xi) \cap \Hil_0$. 
This shows the mapping properties of $\Xi\rest{\Hil_0}$ and establishes that $\ran(\Xi\rest{\Hil_0}) \subset  \ran(\Xi) \cap \Hil_0$. For the reverse inclusion, let $v \in \ran(\Xi) \cap \Hil_0$ so there exists $u \in \dom(\Xi)$ such that $v = \Xi u$.
Then, 
\[ 
v = \Proj{\Hil_0,\Hil_1}v = \Proj{\Hil_0,\Hil_1}\Xi u  = \Xi \Proj{\Hil_0,\Hil_1} u = \Xi\rest{\Hil_0} \Proj{\Hil_0,\Hil_1} u \in \ran(\Xi\rest{\Hil_0}).
\qedhere
\] 
\end{proof} 

To apply this lemma to splittings, we go back to our operator $\Pi$.
Let 
$$ \modulus{\Pi} = \sqrt{\Pi^2} = \sgn(\Pi) \Pi,$$
where $\sgn(\Pi) =  \chi_{(0,\infty)}(\Pi) - \chi_{(-\infty,0]}(\Pi)$.
First, we have the following important lemma. 

\begin{lemma}
\label{Lem:AlphaDecomp}
For any $\alpha > 0$, $\ker(\Pi) = \ker(\modulus{\Pi}^\alpha)$ and $\close{\ran(\Pi)} = \close{\ran(\modulus{\Pi}^\alpha)}$.
\end{lemma}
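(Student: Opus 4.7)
The plan is to prove both equalities via the Borel functional calculus applied to the self-adjoint operator $\Pi$, whose self-adjointness was already established in Proposition~\ref{Prop:DenselyDefined}. Let $E$ denote the projection-valued spectral measure of $\Pi$ on $\R$, so that for any Borel function $f: \R \to \C$, the operator $f(\Pi) = \int_{\R} f(\lambda)\, dE(\lambda)$ is a closed, densely-defined operator on $\Hil$ with $\dom(f(\Pi)) = \set{u \in \Hil: \int_{\R} |f(\lambda)|^2\, d\inprod{E(\lambda)u,u} < \infty}$. Taking $f(\lambda) = |\lambda|^\alpha$, which is a real-valued Borel function, yields the self-adjoint operator $|\Pi|^\alpha$. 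In particular, $|\Pi|^\alpha$ is closed and densely-defined.

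For the kernel equality, I would invoke the standard fact that for any Borel function $f$ on $\spec(\Pi)$, the kernel $\ker(f(\Pi))$ coincides with the range of the spectral projection $E(\set{\lambda : f(\lambda) = 0})$. Indeed, if $u \in \dom(f(\Pi))$, then $\|f(\Pi) u\|^2 = \int_{\R} |f(\lambda)|^2\, d\inprod{E(\lambda) u, u} = 0$ if and only if the scalar measure $\inprod{E(\cdot) u, u}$ is supported on $\set{f = 0}$, which is equivalent to $u = E(\set{f = 0})u$. Applying this with $f(\lambda) = \lambda$ yields $\ker(\Pi) = \ran E(\set{0})$, and with $f(\lambda) = |\lambda|^\alpha$ yields $\ker(|\Pi|^\alpha) = \ran E(\set{\lambda: |\lambda|^\alpha = 0}) = \ran E(\set{0})$, where the last equality uses $\alpha > 0$. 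Therefore $\ker(\Pi) = \ker(|\Pi|^\alpha)$.

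For the range equality, I would use the orthogonal decomposition available for any closed and densely-defined operator $T$, namely $\Hil = \close{\ran(T)} \oplus \ker(T^\ast)$. Since both $\Pi$ and $|\Pi|^\alpha$ are self-adjoint, this yields
\[
\close{\ran(\Pi)} = \ker(\Pi)^{\perp} \quad\text{and}\quad \close{\ran(|\Pi|^\alpha)} = \ker(|\Pi|^\alpha)^{\perp}.
\]
The kernel equality established above then immediately gives $\close{\ran(\Pi)} = \close{\ran(|\Pi|^\alpha)}$.

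The proof is essentially a direct appeal to the Borel functional calculus and there is no serious obstacle. The only minor subtlety is that $|\Pi|^\alpha$ need not be bounded (indeed, $\Pi$ itself need not be bounded), so the equalities of kernels and orthogonal complements of closed ranges must be argued at the level of densely-defined closed operators rather than bounded ones; this is precisely why I rely on spectral measures and the standard duality between kernel and closed range for self-adjoint operators, both of which are insensitive to boundedness.
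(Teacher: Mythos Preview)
Your proof is correct. For the kernel equality you take a genuinely different route from the paper: you invoke the spectral measure directly and use the identity $\ker(f(\Pi)) = \ran E(\set{f = 0})$ for Borel $f$, which immediately gives both kernels as $\ran E(\set{0})$. The paper instead factors $\modulus{\Pi}^\alpha = (1 + \Pi^{2k}) f_\alpha(\Pi)$ with $f_\alpha(\zeta) = \modulus{\zeta}^\alpha/(1 + \zeta^{2k})$ bounded and $(1 + \Pi^{2k})$ invertible, deducing $\ker(\modulus{\Pi}^\alpha) = \ker(f_\alpha(\Pi))$ and then arguing each inclusion via functional calculus (including a bootstrapping step with $\modulus{\Pi} = (\modulus{\Pi}^{k\alpha})^{1/(k\alpha)}$ for the reverse direction). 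Your approach is shorter and more transparent in the self-adjoint setting; the paper's factorisation is a manoeuvre typical of the holomorphic/bisectorial functional calculus tradition (as in \cite{AKMc}), where one cannot appeal to a projection-valued spectral measure. For the range equality, both arguments are identical: take orthogonal complements of the kernels using self-adjointness.
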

\begin{proof}
First, we prove the statement regarding the kernel.
For that, write 
$$ \modulus{\Pi}^\alpha = (1 + \Pi^{2k}) f_\alpha(\Pi),$$
where $k > \alpha$ and 
$$ f_\alpha(\zeta) = \frac{\modulus{\zeta}^\alpha}{1 + \zeta^{2k}},$$
which is clearly holomorphic on bisector containing $\R$.  Let $u \in \ker(\modulus{\Pi}^\alpha)$.
Then, since $(1 + \Pi^{2k})$ is an invertible operator, we have that $f_\alpha(\Pi)u = 0$.
Conversely, we obtain that $\ker (f_\alpha(\Pi)) \subset \ker(\modulus{\Pi}^\alpha)$. 
Therefore, $\ker(f_\alpha(\Pi)) = \ker(\modulus{\Pi}^\alpha)$. By functional calculus considerations, we have that $\ker(\modulus{\Pi}) = \ker(\Pi) \subset \ker(f_\alpha(\Pi)) = \ker(\modulus{\Pi}^\alpha)$.
To obtain the reverse conclusion, take $l > 0$ sufficiently large so that $l \alpha > 1$ and then noting that 
$$\modulus{\Pi} = (\modulus{\Pi}^{l \alpha})^{\frac{1}{l \alpha}}$$ 
along with the fact that $\ker(\modulus{\Pi}^{l\alpha}) = \ker(\modulus{\Pi}^\alpha)$ and interchanging $\Pi$ with $\modulus{\Pi}^\alpha$ in the first argument, we obtain that $\ker(\modulus{\Pi}^{\alpha}) =  \ker(\modulus{\Pi}^{l \alpha}) \subset \ker(\modulus{\Pi})$. Note then that $\close{\ran(\modulus{\Pi}^\alpha)} = \ker(\modulus{\Pi}^\alpha)^{\perp} = \ker(\Pi)^{\perp} =  \close{\ran(\Pi)}$.
\end{proof} 

This leads us to this important proposition. 
\begin{proposition} 
\label{Prop:CommSplit}
Suppose that for some $\alpha > 0$, $[\Proj{\Hil_0, \Hil_1}, \modulus{\Pi}^\alpha] = 0$.
Then, 
\[
\ker(\Pi) \cap \Hil_i = \ker(\modulus{\Pi}^\alpha \rest{\Hil_i})\ \quad\text{and}\quad   
\ran(\Pi) \cap \Hil_i = \ran(\modulus{\Pi}^\alpha \rest{\Hil_i})
\]
along with the splitting 
\[
\Hil_i = \ker(\Pi) \cap \Hil_i \oplus \close{\ran(\Pi)} \cap \Hil_i. 
\]
\end{proposition}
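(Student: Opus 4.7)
The plan is to reduce everything to spectral-theoretic properties of the positive self-adjoint operator $\modulus{\Pi}^\alpha$. Lemma~\ref{Lem:AlphaDecomp} already identifies the kernel and closure of the range of $\Pi$ with those of $\modulus{\Pi}^\alpha$, and Lemma~\ref{Lem:SplitMap} applied to $\Xi = \modulus{\Pi}^\alpha$ under the standing commutation hypothesis gives $\ker(\modulus{\Pi}^\alpha\rest{\Hil_i}) = \ker(\modulus{\Pi}^\alpha) \cap \Hil_i$ and $\ran(\modulus{\Pi}^\alpha\rest{\Hil_i}) = \ran(\modulus{\Pi}^\alpha) \cap \Hil_i$. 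Splicing these together with Lemma~\ref{Lem:AlphaDecomp} produces the two identities in the statement (read with closures where appropriate for the range, since only the closures coincide in general).

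The substantive step is the splitting of $\Hil_i$. The idea is to upgrade the hypothesized commutation (which is with an unbounded operator, in the domain-preserving sense) to honest commutation with the orthogonal spectral projection $\Proj{\ker(\Pi)}$, which equals $\chi_{\set{0}}(\modulus{\Pi}^\alpha)$ by Lemma~\ref{Lem:AlphaDecomp}. Write $P_i := \Proj{\Hil_i, \Hil_{1-i}}$ and fix $\lambda > 0$. For every $u \in \Hil$ the vector $v := (\lambda + \modulus{\Pi}^\alpha)^{-1} u$ lies in $\dom(\modulus{\Pi}^\alpha)$, so by the hypothesis $P_i v \in \dom(\modulus{\Pi}^\alpha)$ with
\[
(\lambda + \modulus{\Pi}^\alpha) P_i v = P_i (\lambda + \modulus{\Pi}^\alpha) v = P_i u,
\]
forcing $P_i (\lambda + \modulus{\Pi}^\alpha)^{-1} = (\lambda + \modulus{\Pi}^\alpha)^{-1} P_i$ as bounded operators. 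The standard self-adjoint functional calculus (approximating $\chi_{\set{0}}$ strongly by rational functions of the resolvent) then yields commutation of $P_i$ with every bounded Borel function of $\modulus{\Pi}^\alpha$, in particular with $\Proj{\ker(\Pi)}$ and its complement $\Proj{\close{\ran(\Pi)}} = I - \Proj{\ker(\Pi)}$.

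With this in hand, the splitting is a one-line consequence. Given $u \in \Hil_i$, the orthogonal decomposition from Proposition~\ref{Prop:DenselyDefined} produces $u = u_0 + u_1$ with $u_0 \in \ker(\Pi)$ and $u_1 \in \close{\ran(\Pi)}$; commutation forces $u_0 = \Proj{\ker(\Pi)} u = \Proj{\ker(\Pi)} P_i u = P_i \Proj{\ker(\Pi)} u = P_i u_0$, hence $u_0 \in \ker(\Pi) \cap \Hil_i$, and identically $u_1 \in \close{\ran(\Pi)} \cap \Hil_i$. Orthogonality of the sum is inherited from the ambient orthogonal decomposition of $\Hil$.

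The main obstacle is the resolvent/functional-calculus bridge in the second paragraph: the hypothesis only supplies commutation with $\modulus{\Pi}^\alpha$ on its domain, and one must be careful that $P_i$ is a bounded but in general non-orthogonal projector. The displayed identity is exactly what is needed to promote this to an equality of bounded operators, after which self-adjoint spectral theory does the rest; everything else is bookkeeping built on the two preceding lemmas.
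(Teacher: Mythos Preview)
Your argument is correct. The first paragraph matches the paper's treatment of the two identities: the paper also defers the range statement to Lemma~\ref{Lem:SplitMap} and handles the kernel by a one-line computation (observing, as you do implicitly, that the kernel identity does not even require the commutator hypothesis). Your parenthetical about closures for the range is apt; the paper glosses over this and what is actually used downstream is only the closed version.

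For the splitting, however, you take a genuinely different route. The paper proceeds by hand: given $u \in \Hil_0$ with Hodge decomposition $u = v + w$, it checks directly that $\Proj{\Hil_0,\Hil_1} v \in \ker(\Pi)$ via $\modulus{\Pi}^\alpha \Proj{\Hil_0,\Hil_1} v = \Proj{\Hil_0,\Hil_1} \modulus{\Pi}^\alpha v = 0$, and shows $\Proj{\Hil_0,\Hil_1} w \in \close{\ran(\Pi)}$ by choosing an approximating sequence $w = \lim_n \modulus{\Pi}^\alpha w_n'$ and commuting the projector through term by term. Your approach instead promotes the domain-level commutation to commutation with the resolvents $(\lambda + \modulus{\Pi}^\alpha)^{-1}$ and then invokes self-adjoint functional calculus to deduce that $P_i$ commutes with the spectral projector $\chi_{\set{0}}(\modulus{\Pi}^\alpha) = \Proj{\ker(\Pi)}$. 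This is cleaner conceptually and actually buys more (commutation with \emph{every} bounded Borel function of $\modulus{\Pi}^\alpha$), at the cost of importing a little spectral machinery; the paper's bare-hands argument stays closer to the hypotheses and needs nothing beyond Lemma~\ref{Lem:AlphaDecomp}.
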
 
\begin{proof} 
We consider the case $i = 0$, since $i = 1$ is exactly the same with $\Hil_0$ and $\Hil_1$ interchanged in the argument. 
Suppose that $u \in  \ker(\Pi) \cap \Hil_0 = \ker(\modulus{\Pi}^\alpha) \cap \Hil_0$ for any $\alpha >0$ from Lemma~\ref{Lem:AlphaDecomp}. 
Therefore, it readily follows that 
\[ 
0 =  \modulus{\Pi}^\alpha u = \modulus{\Pi}^\alpha \Proj{\Hil_0,\Hil_1} u = \modulus{\Pi}^\alpha \rest{\Hil_0} u.
\] 
This shows that $u \in \ker(\modulus{\Pi}^\alpha\rest{\Hil_0})$.
Conversely, if we take $u \in \ker(\modulus{\Pi}^\alpha\rest{\Hil_0})$, then it follows that $u \in \ker(\modulus{\Pi}^\alpha) \cap \Hil_0$ and by Lemma~\ref{Lem:AlphaDecomp}, we again have the equality $\ker(\modulus{\Pi}^\alpha)  = \ker(\modulus{\Pi})$.
Therefore, we get that $\ker(\Pi) \cap \Hil_i = \ker(\modulus{\Pi}^\alpha \rest{\Hil_i})$ without requiring the commutator condition on the projector. The range result requires the commutator condition and it is established in Lemma~\ref{Lem:SplitMap}. Now, for the splitting of the space, it is clear that the containment ``$\supset$'' holds. 
Conversely, let $u \in \Hil_0 \subset \Hil$.
By Proposition~\ref{Prop:HodgeDecomp} and Proposition~\ref{Prop:DenselyDefined}, $u = v + w \in (\ker(\Pi) \oplus \close{\ran(\Pi)}) \cap \Hil_0$.
Since $u \in \Hil_0$, $u = \Proj{\Hil_0,\Hil_1} u =  \Proj{\Hil_0,\Hil_1} v + \Proj{\Hil_0,\Hil_1} w$. 
To prove the inclusion ``$\subset$'', it suffices to prove that $\Proj{\Hil_0,\Hil_1} v \in \ker(\Pi)$ and  $\Proj{\Hil_0,\Hil_1} w \in \close{\ran(\Pi)}$. Using the fact that $\ker(\Pi) = \ker(\modulus{\Pi}^\alpha)$ from Lemma~\ref{Lem:AlphaDecomp}, we have that 
\[ 
0 = \Pi v =  \modulus{\Pi}^\alpha v = \Proj{\Hil_0,\Hil_1} \modulus{\Pi}^\alpha v =  \modulus{\Pi}^\alpha \Proj{\Hil_0,\Hil_1}  v.
\]
That is, $\Proj{\Hil_0,\Hil_1}  v \in \ker(\modulus{\Pi}^\alpha) = \ker(\Pi)$. Similarly, $w \in \close{\ran(\Pi)} = \close{\ran(\modulus{\Pi}^\alpha)}$ by Lemma~\ref{Lem:AlphaDecomp}.
Therefore, there exists $w'_n \in \dom(\modulus{\Pi}^\alpha)$ such that $w = \lim_{n\to\infty} \modulus{\Pi}^\alpha w'_n$.
Then, 
\[ 
\Proj{\Hil_0,\Hil_1} w 
= 
\Proj{\Hil_0,\Hil_1} \lim_{n\to\infty} \modulus{\Pi}^\alpha w'_n
=
\lim_{n\to\infty}  \Proj{\Hil_0,\Hil_1}  \modulus{\Pi}^\alpha w'_n
= 
\lim_{n \to \infty} \modulus{\Pi}^\alpha \Proj{\Hil_0,\Hil_1} w'_n
\in \close{\ran(\modulus{\Pi}^\alpha)},
\]
where we use the boundedness of the projector in the second equality and $[\Proj{\Hil_0, \Hil_1}, \modulus{\Pi}^\alpha] = 0$ in the third.
Since $\close{\ran(\Pi)} = \close{\ran(\modulus{\Pi}^\alpha)}$ as we have already mentioned, we have that $\Proj{\Hil_0,\Hil_1} w \in \close{\ran(\Pi)}$. 
\end{proof}

\begin{theorem}[Abstract Hodge-type theorem along a splitting]
\label{Thm:AbsHodgeTypeSplitting} 
Let $\Hil = \Hil_0 \oplus \Hil_1$ and $\Pi = \Gamma + \Gamma^\ast$ and $\Pi_B := \Gamma + B^{-1} \Gamma^\ast B$. 
Suppose  there exists $\alpha > 0$ such that the commutators $[\Proj{\Hil_0,\Hil_1}, \modulus{\Pi}^\alpha] = 0$ and $[\Proj{\Hil_0,\Hil_1}, \modulus{\Pi_B}^\alpha] = 0$ on $\dom(\modulus{\Pi}^\alpha)$ and $\dom(\modulus{\Pi_B}^\alpha)$ respectively. 
Then, 
$$\Hil_0 
= 
\ker(\modulus{\Pi}^\alpha\rest{\Hil_0}) \oplus \close{\ran(\modulus{\Pi}^\alpha\rest{\Hil_0})} 
= 
\ker(\modulus{\Pi_B}^\alpha\rest{\Hil_0}) \oplus \close{\ran(\modulus{\Pi}^\alpha\rest{\Hil_0})}$$
and
$\ker(\Pi\rest{\Hil_0}) \cong \ker(\Pi_B\rest{\Hil_0})$ with the isomorphism given by 
\[ 
\Proj{\ker(\modulus{\Pi_B}^\alpha \rest{\Hil_0}), \close{\ran(\modulus{\Pi}^\alpha\rest{\Hil_0})}}\rest{\ker(\modulus{\Pi}^\alpha\rest{\Hil_0})} 
= 
\Proj{\ker(\Pi_B), \close{\ran(\Pi)}}\rest{\ker(\Pi) \cap \Hil_0}.
\] 
\end{theorem}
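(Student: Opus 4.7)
The plan is to assemble the conclusion from three ingredients already in place: Proposition~\ref{Prop:CommSplit} (splittings of $\Hil_i$ compatible with $\Pi$), Proposition~\ref{Prop:SpaTot} (the mixed global decomposition $\Hil = \ker(\Pi_B) \oplus \close{\ran(\Pi)}$), and Lemma~\ref{Lem:SubIsom} (two splittings of a Banach space with a common complement yield an isomorphism of the kernels). The new work is to descend the mixed decomposition from $\Hil$ down to $\Hil_0$, which is precisely where both commutator hypotheses enter simultaneously.

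First I would record the easy identifications. By Lemma~\ref{Lem:AlphaDecomp} the kernel and closed range of $\modulus{\Pi}^\alpha$ coincide with those of $\Pi$, and Lemma~\ref{Lem:SplitMap} together with the commutation $[\Proj{\Hil_0,\Hil_1},\modulus{\Pi}^\alpha]=0$ gives
\[
\ker(\modulus{\Pi}^\alpha\rest{\Hil_0}) = \ker(\Pi) \cap \Hil_0, \qquad
\close{\ran(\modulus{\Pi}^\alpha\rest{\Hil_0})} = \close{\ran(\Pi)} \cap \Hil_0,
\]
where the second equality uses a short approximation argument: any $u \in \close{\ran(\Pi)} \cap \Hil_0$ is a limit of $\modulus{\Pi}^\alpha v_n$, and applying $\Proj{\Hil_0,\Hil_1}$ through the commutator realises $u$ as a limit of elements of $\ran(\modulus{\Pi}^\alpha) \cap \Hil_0 = \ran(\modulus{\Pi}^\alpha\rest{\Hil_0})$. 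The analogous identifications hold for $\Pi_B$. Proposition~\ref{Prop:CommSplit} then already yields the first splitting $\Hil_0 = \ker(\modulus{\Pi}^\alpha\rest{\Hil_0}) \oplus \close{\ran(\modulus{\Pi}^\alpha\rest{\Hil_0})}$.

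The main step is the mixed splitting $\Hil_0 = (\ker(\Pi_B) \cap \Hil_0) \oplus (\close{\ran(\Pi)} \cap \Hil_0)$. I would adapt the second half of the proof of Proposition~\ref{Prop:CommSplit}: given $u \in \Hil_0$, Proposition~\ref{Prop:SpaTot} provides a unique decomposition $u = v + w$ with $v \in \ker(\Pi_B)$ and $w \in \close{\ran(\Pi)}$. Applying $\Proj{\Hil_0,\Hil_1}$ and using $\ker(\Pi_B) = \ker(\modulus{\Pi_B}^\alpha)$ together with the commutator for $\modulus{\Pi_B}^\alpha$ shows $\Proj{\Hil_0,\Hil_1}v \in \ker(\Pi_B)$; the commutator for $\modulus{\Pi}^\alpha$ (via the approximation argument above) shows $\Proj{\Hil_0,\Hil_1}w \in \close{\ran(\Pi)}$. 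Since $u \in \Hil_0$, $u = \Proj{\Hil_0,\Hil_1}v + \Proj{\Hil_0,\Hil_1}w$ is the desired decomposition inside $\Hil_0$, and uniqueness inherits from Proposition~\ref{Prop:SpaTot}. The expected obstacle here is bookkeeping: the $\Pi_B$-hypothesis is essential to keep the kernel part in $\Hil_0$, while the $\Pi$-hypothesis is needed for the range part, so both commutator assumptions are genuinely used.

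With both splittings of $\Hil_0$ sharing the common complement $\close{\ran(\Pi)} \cap \Hil_0$, Lemma~\ref{Lem:SubIsom} (applied with $\cB = \Hil_0$, $\cB_1 = \ker(\Pi) \cap \Hil_0$, $\cB_1' = \ker(\Pi_B) \cap \Hil_0$, $\cB_2 = \close{\ran(\Pi)} \cap \Hil_0$) produces the isomorphism $\ker(\Pi\rest{\Hil_0}) \cong \ker(\Pi_B\rest{\Hil_0})$, realised as the projection to $\ker(\Pi_B) \cap \Hil_0$ along $\close{\ran(\Pi)} \cap \Hil_0$. To identify this with the two formulae in the statement, the identifications of kernels and ranges above immediately give the first. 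For the second, observe that if $u \in \ker(\Pi) \cap \Hil_0$ decomposes within $\Hil_0$ as $u = v + w$ with $v \in \ker(\Pi_B) \cap \Hil_0 \subset \ker(\Pi_B)$ and $w \in \close{\ran(\Pi)} \cap \Hil_0 \subset \close{\ran(\Pi)}$, then by the uniqueness of the global splitting in Proposition~\ref{Prop:SpaTot} this is also the decomposition in $\Hil$, so the restriction of $\Proj{\ker(\Pi_B),\close{\ran(\Pi)}}$ to $\ker(\Pi) \cap \Hil_0$ agrees with the projection taken inside $\Hil_0$.
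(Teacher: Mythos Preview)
Your proposal is correct and follows essentially the same route as the paper's proof: both obtain the first splitting of $\Hil_0$ directly from Proposition~\ref{Prop:CommSplit}, establish the mixed splitting by decomposing $u \in \Hil_0$ via Proposition~\ref{Prop:SpaTot} and pushing the two pieces into $\Hil_0$ using the respective commutator hypotheses, and then invoke Lemma~\ref{Lem:SubIsom} on the common complement $\close{\ran(\Pi)} \cap \Hil_0$. You supply a bit more detail than the paper does---the explicit approximation argument for $\close{\ran(\modulus{\Pi}^\alpha\rest{\Hil_0})} = \close{\ran(\Pi)} \cap \Hil_0$, and the uniqueness argument identifying the restricted projector with the global one---but these are exactly the points the paper glosses over with ``arguing as in Proposition~\ref{Prop:CommSplit}''.
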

\begin{proof}
First we show  $\Hil_0 = \ker(\modulus{\Pi_B}^\alpha\rest{\Hil_0}) \oplus \close{\ran(\modulus{\Pi}^\alpha\rest{\Hil_0})}$.
Fix $v \in \Hil_0$ and  by Proposition~\ref{Prop:SpaTot}, we have that $v = v_0 + v_1 \in \ker( \Pi_B) \oplus \close{\ran(\Pi)}$.
Again, $v = \Proj{\Hil_0, \Hil_1} v = \Proj{\Hil_0, \Hil_1} v_0 + \Proj{\Hil_0, \Hil_1} v_1$. 
Arguing as in Proposition~\ref{Prop:CommSplit}, using both commutator assumptions, we obtain that $\Proj{\Hil_0, \Hil_1} u_1 \in \close{\ran(\modulus{\Pi}^\alpha\rest{\Hil_0})}$ and  $\Proj{\Hil_0, \Hil_1} v_1\in \close{\ran(\modulus{\Pi}^\alpha\rest{\Hil_0})}$.
This shows that $\Hil_0 \subset \ker(\modulus{\Pi_B}^\alpha\rest{\Hil_0}) \oplus \close{\ran(\modulus{\Pi}^\alpha\rest{\Hil_0})}$. 
The opposite containment is immediate. The  splitting $\Hil_0 = \ker(\modulus{\Pi}^\alpha\rest{\Hil_0}) \oplus \close{\ran(\modulus{\Pi}^\alpha\rest{\Hil_0})}$  follows from Proposition~\ref{Prop:CommSplit}, applied to $\Pi$. 
Arguing as in Theorem~\ref{Thm:AbsHodgeType}, using Lemma~\ref{Lem:SubIsom}, we obtain the isomorphism $\ker(\Pi\rest{\Hil_0}) \cong \ker(\Pi_B\rest{\Hil_0})$ via $\Proj{\ker(\modulus{\Pi_B}^\alpha \rest{\Hil_0}), \close{\ran(\modulus{\Pi}^\alpha\rest{\Hil_0})}}\rest{\ker(\modulus{\Pi}^\alpha\rest{\Hil_0})}.$ Furthermore, from this proposition, we have
\begin{align*}
&\ker(\modulus{\Pi}^\alpha\rest{\Hil_0})  = \ker(\Pi) \cap \Hil_0 \\ 
&\ker(\modulus{\Pi_B}^\alpha\rest{\Hil_0})  = \ker(\Pi_B) \cap \Hil_0\\
&\close{\ran(\modulus{\Pi}^\alpha\rest{\Hil_0})} = \close{\ran(\Pi)}\cap \Hil_0, 
\end{align*}
which shows that 
\[
\Proj{\ker(\modulus{\Pi_B}^\alpha\rest{\Hil_0}), \close{\ran(\modulus{\Pi}^\alpha\rest{\Hil_0})}} 
= 
\Proj{\ker(\Pi_B) \cap \Hil_0, \close{\ran(\Pi)}\cap \Hil_0}
= 
\Proj{\ker(\Pi_B), \close{\ran(\Pi)}}\rest{\ker(\Pi) \cap \Hil_0}.
\]
Clearly, the equality $\Proj{\ker(\modulus{\Pi_B}^\alpha \rest{\Hil_0}), \close{\ran(\modulus{\Pi}^\alpha\rest{\Hil_0})}}\rest{\ker(\modulus{\Pi}^\alpha\rest{\Hil_0})}  = \Proj{\ker(\Pi_B), \close{\ran(\Pi)}}\rest{\ker(\Pi) \cap \Hil_0}$ follows from this. 
\end{proof}

\begin{remark}
Note that it is not possible to apply Theorem~\ref{Thm:AbsHodgeType} directly to the operators $\modulus{\Pi}^\alpha$ and $\modulus{\Pi_B}^\alpha$ since these operators cannot be written as the sum of two nilpotent operators. 
\end{remark}

\begin{corollary}
\label{Cor:AbsHodgeTypeSplittingInt}
If $k := \alpha \in \Na$, then 
$$\Hil_0 = \ker(\Pi\rest{\Hil_0}) \oplus \close{\ran({\Pi}^k)} 
= \ker(\Pi_B\rest{\Hil_0}) \oplus \close{\ran({\Pi}^k)}$$
and
$\ker(\Pi\rest{\Hil_0}) \cong \ker(\Pi_B\rest{\Hil_0})$ with the isomorphism given by $\Proj{\ker(\Pi_B\rest{\Hil_0}), \close{\ran({\Pi}^k\rest{\Hil_0})}}$.
\end{corollary}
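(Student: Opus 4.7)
The approach is to specialise Theorem~\ref{Thm:AbsHodgeTypeSplitting} to $\alpha = k \in \Na$ and then rewrite the kernel and range expressions involving $\modulus{\Pi}^k$ (and $\modulus{\Pi_B}^k$) in the cleaner form involving $\Pi^k$ (respectively $\Pi_B^k$) itself. The plan is to assemble two simple identities. First, the self-adjointness of $\Pi$ together with Lemma~\ref{Lem:AlphaDecomp} (applied with $\alpha = k$) yields
\[
\ker(\Pi) = \ker(\modulus{\Pi}^k) = \ker(\Pi^k),\qquad \close{\ran(\Pi)} = \close{\ran(\modulus{\Pi}^k)} = \close{\ran(\Pi^k)},
\]
where the identifications with $\Pi^k$ also follow directly from self-adjointness via $\close{\ran(\Pi^k)} = \ker(\Pi^k)^\perp = \ker(\Pi)^\perp$. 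Analogous identities hold for $\Pi_B$, using its self-adjointness with respect to $\inprod{\cdot,\cdot}_B$.

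With these identities in hand, the commutator hypothesis (read in the form $[\Proj{\Hil_0,\Hil_1}, \Pi^k] = 0$ and $[\Proj{\Hil_0,\Hil_1}, \Pi_B^k] = 0$, matching the formulation used in Theorem~\ref{Thm:GenHodgeMain}) combined with Lemma~\ref{Lem:SplitMap} gives $\ran(\Pi^k\rest{\Hil_0}) = \ran(\Pi^k) \cap \Hil_0$, and analogously for $\Pi_B^k$. The kernel identity $\ker(\Pi\rest{\Hil_0}) = \ker(\Pi) \cap \Hil_0$ is immediate from the definition of restriction. Substituting these equivalences into the two splittings
\[
\Hil_0 = \ker(\modulus{\Pi}^k\rest{\Hil_0}) \oplus \close{\ran(\modulus{\Pi}^k\rest{\Hil_0})} = \ker(\modulus{\Pi_B}^k\rest{\Hil_0}) \oplus \close{\ran(\modulus{\Pi}^k\rest{\Hil_0})}
\]
furnished by Theorem~\ref{Thm:AbsHodgeTypeSplitting} yields the claimed decompositions in terms of $\Pi$, $\Pi_B$ and $\Pi^k$.

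For the isomorphism, I will invoke Lemma~\ref{Lem:SubIsom} exactly as in the proofs of Theorem~\ref{Thm:AbsHodgeType} and Theorem~\ref{Thm:AbsHodgeTypeSplitting}, taking $\cB = \Hil_0$ with the two complementary splittings $\Hil_0 = \ker(\Pi\rest{\Hil_0}) \oplus \close{\ran(\Pi^k\rest{\Hil_0})}$ and $\Hil_0 = \ker(\Pi_B\rest{\Hil_0}) \oplus \close{\ran(\Pi^k\rest{\Hil_0})}$, to conclude that the projection $\Proj{\ker(\Pi_B\rest{\Hil_0}), \close{\ran(\Pi^k\rest{\Hil_0})}}\rest{\ker(\Pi\rest{\Hil_0})}$ is the desired isomorphism. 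The main obstacle is minor but requires care: reconciling the integer-power formulation of the commutator hypothesis used here with the fractional-power formulation $[\Proj{\Hil_0,\Hil_1}, \modulus{\Pi}^\alpha] = 0$ appearing in Theorem~\ref{Thm:AbsHodgeTypeSplitting}. For even $k$ this is automatic since $\modulus{\Pi}^k = \Pi^k$, while for odd $k$ the cleanest path is to re-run the proof of Proposition~\ref{Prop:CommSplit} with $\Pi^k$ in place of $\modulus{\Pi}^\alpha$, using only the identity $\ker(\Pi^k) = \ker(\Pi)$, the self-adjointness, and the assumed commutation $[\Proj{\Hil_0,\Hil_1}, \Pi^k] = 0$.
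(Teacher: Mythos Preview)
Your approach is essentially the same as the paper's, which proves the corollary in one line by invoking exactly the identities $\ker(\modulus{\Pi}^k) = \ker(\Pi^k)$, $\ker(\modulus{\Pi_B}^k) = \ker(\Pi_B^k)$, and $\close{\ran(\modulus{\Pi}^k)} = \close{\ran(\Pi^k)}$ for $k \in \Na$, and otherwise reads everything off Theorem~\ref{Thm:AbsHodgeTypeSplitting}. Your additional discussion of reconciling the commutator hypothesis $[\Proj{\Hil_0,\Hil_1}, \Pi^k] = 0$ with $[\Proj{\Hil_0,\Hil_1}, \modulus{\Pi}^k] = 0$ is not needed for the corollary as stated (it inherits the $\modulus{\Pi}^\alpha$ hypothesis from Theorem~\ref{Thm:AbsHodgeTypeSplitting} with $\alpha = k$), though it is a legitimate observation about how the corollary is later applied in Theorem~\ref{Thm:GenHodgeMain}.
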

\begin{proof}
This simply follows from the fact that $\ker(\modulus{\Pi}^\alpha) = \ker(\Pi^k)$, $\ker(\modulus{\Pi_B}^\alpha) = \ker(\Pi_B^k)$ and $\close{\ran(\modulus{\Pi}^k)}  = \close{\ran({\Pi}^k)}$ for $k \in \Na$.
\end{proof}

\subsection{Facts about Banach space isomorphisms}

\begin{proposition} 
\label{Prop:Decomp}
Let $\Spa$ be a Banach space and let $Y, Y', Z, Z'$ be closed subspaces such that 
$$ \Spa = Y \oplus Z = Y' \oplus Z'$$ 
and $Y \subset Z'$ and $Y' \subset Z$.
Then, 
$$ \Spa = Z \cap Z' \oplus Y \oplus Y'.$$
\end{proposition}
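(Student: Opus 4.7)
The plan is to exploit the two hypothesised topological direct sums via their associated bounded projectors, show that the inclusions $Y \subset Z'$ and $Y' \subset Z$ make these projectors ``orthogonal'' in the operator-algebraic sense, and then combine them into a single bounded idempotent whose kernel and range realise the desired decomposition.

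First, I would introduce the projections $P: \Spa \to Y$ along $Z$ and $P': \Spa \to Y'$ along $Z'$. Because each of $Y, Z, Y', Z'$ is closed and the sums $Y \oplus Z$ and $Y' \oplus Z'$ are topological, the closed graph theorem forces $P$ and $P'$ to be bounded linear projections. The containment $Y \subset Z' = \ker P'$ then gives $P' P = 0$, and $Y' \subset Z = \ker P$ gives $P P' = 0$.

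Next, define $Q := P + P'$. The identities just established yield
\[
Q^2 = P^2 + P'P + PP' + (P')^2 = P + P' = Q,
\]
so $Q$ is a bounded idempotent on $\Spa$. I would then identify $\ran Q$ and $\ker Q$. The inclusion $\ran Q \subset Y + Y'$ is obvious. Conversely, for $y \in Y \subset Z'$ we have $Qy = Py + P'y = y + 0 = y$, and similarly $Qy' = y'$ for $y' \in Y'$, so $\ran Q = Y + Y'$. To upgrade this to a direct sum, note that any $u \in Y \cap Y'$ lies in $Y \subset Z'$ and in $Y'$, hence in $Y' \cap Z' = \{0\}$; therefore $\ran Q = Y \oplus Y'$. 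For the kernel, if $Qu = 0$ then $Pu = -P'u \in Y \cap Y' = \{0\}$, forcing $Pu = 0$ and $P'u = 0$, i.e.\ $u \in Z \cap Z'$; the reverse inclusion is immediate from $P\rest{Z} = 0$ and $P'\rest{Z'} = 0$.

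Finally, since $Q$ is a bounded projection on the Banach space $\Spa$, the associated splitting
\[
\Spa = \ker Q \oplus \ran Q = (Z \cap Z') \oplus (Y \oplus Y')
\]
is a topological direct sum of closed subspaces, which is the claim. I do not anticipate a genuine obstacle here: the only point requiring care is the boundedness of $P$ and $P'$, which is the standard consequence of the closed graph theorem for topologically complemented closed subspaces, and the algebraic manipulations with $P, P', Q$ proceed formally once $PP' = P'P = 0$ is in hand.
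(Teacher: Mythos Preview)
Your proof is correct and uses the same basic ingredients as the paper --- the bounded projections associated with the two given direct sums, together with the relations $P'P = 0$ and $PP' = 0$ coming from $Y \subset Z'$ and $Y' \subset Z$ --- but the organisation is genuinely different and, in fact, cleaner. The paper proceeds element-by-element: it expands $x$ via both decompositions, shows that $\Proj{Z,Y}\Proj{Z',Y'} = \Proj{Z',Y'}\Proj{Z,Y}$ so that this common value lands in $Z \cap Z'$, and then checks separately that the three subspaces have pairwise trivial intersections and that $(Y \oplus Y') \cap (Z \cap Z') = 0$. You instead package everything into the single bounded idempotent $Q = P + P'$, whose kernel and range you identify directly; the topological direct sum then drops out for free from the general fact that a bounded projection splits the space. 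In the paper's notation, your $Q$ satisfies $I - Q = (I - P')(I - P) = (I - P)(I - P')$, which is exactly the commutativity the paper establishes, so the two arguments are equivalent at the level of content. What your approach buys is economy: you avoid the separate intersection checks because they are absorbed into the identification of $\ker Q$ and $\ran Q$.
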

\begin{proof}
Let $\Proj{A, B}$ denote the projection to $A$ along $B$ and fix $x \in \Spa$.
Then,
\begin{align*} 
x &= \Proj{Y, Z} x + \Proj{Z,Y} x  \\
&= \Proj{Y',Z'} \Proj{Y,Z}x + \Proj{Z',Y'}  \Proj{Y,Z} x + \Proj{Y',Z'} \Proj{Z,Y}x + \Proj{Z',Y'} \Proj{Z,Y}x \\ 
&= \Proj{Y',Z'} \Proj{Y,Z}x + \Proj{Y',Z'} \Proj{Z,Y}x + \Proj{Z',Y'}  \Proj{Y,Z} x + \Proj{Z',Y'} \Proj{Z,Y}x \\
&= \Proj{Y',Z'} x + \Proj{Y,Z}x + \Proj{Z',Y'} \Proj{Z,Y}x,
\end{align*}
where the ultimate equality follows from the fact that $Y \subset Z'$ from which $\Proj{Z',Y'}  \Proj{Y,Z} x = \Proj{Y,Z} x$.
%This is the calculatoin to verify the reverse:
%\begin{align*} 
%x &= \Proj{Y', Z'} x + \Proj{Z',Y'} x  \\
%&= \Proj{Y,Z} \Proj{Y',Z'}x + \Proj{Z'',Y}  \Proj{Y',Z'} x + \Proj{Y,Z''} \Proj{Z',Y'}x + \Proj{Z'',Y} \Proj{Z',Y'}x \\ 
%&= \Proj{Y,Z} \Proj{Y',Z'}x + \Proj{Y,Z''} \Proj{Z',Y'}x + \Proj{Z'',Y}  \Proj{Y',Z'} x + \Proj{Z'',Y} \Proj{Z',Y'}x \\
%&= \Proj{Y,Z} x + \Proj{Y',Z'}x + \Proj{Z'',Y} \Proj{Z',Y'}x,
%\end{align*}
%where the ultimate equality follows from the fact that $Y' \subset Z$ from which $\Proj{Z'',Y}  \Proj{Y',Z'} x = \Proj{Y',Z'} x$.
By the same calculation, interchanging the roles of $Y, Z$ with $Y'$ and $Z'$ and relying on $Y' \subset Z$, we obtain 
$$x = \Proj{Y',Z'} x + \Proj{Y,Z}x + \Proj{Z,Y} \Proj{Z',Y'}x.$$
Therefore,
$$ \Proj{Y',Z'} x + \Proj{Y,Z}x + \Proj{Z,Y} \Proj{Z',Y'}x = \Proj{Y',Z'} x + \Proj{Y,Z}x + \Proj{Z',Y'} \Proj{Z,Y}x$$
from which follows
$$ \Proj{Z,Y} \Proj{Z',Y'}x =  \Proj{Z',Y'} \Proj{Z,Y}x.$$
That is, 
$$ \Proj{Z,Y} \Proj{Z',Y'}x =  \Proj{Z',Y'} \Proj{Z,Y}x \in Z \cap Z'.$$
Therefore, we have shown that 
$$x \in Y + Y' + Z \cap Z'.$$
Now from hypothesis $Y \cap Z = 0$, we have $Y \cap Z \cap Z' = 0, Y' \cap Z \cap Z' = 0$ and $Y \cap Y' \subset Y \cap Z = 0$. Therefore, we obtain that 
$$x \in Y \oplus Y' \oplus Z \cap Z'.$$
Next, we show  that $(Y \oplus Y') \cap Z \cap Z' = 0$.
Let $z := y + y' \in Y \oplus Y' \cap Z \cap Z'$.
Clearly $z = \Proj{Z,Y}z = \Proj{Z,Y} y' = y'$ since $Y' \subset Z$.
On the other hand, $z = \Proj{Z',Y'}z = \Proj{Z',Y'}y = y$ since $Y \subset Z'$.
Therefore, we have that $z = z + z$ and hence $z = 0$.
Together, this shows that 
$$ \Spa \subset Y \oplus Y' \oplus Z \cap Z'.$$
The opposite containment is immediate. 
\end{proof} 

\begin{lemma}
\label{Lem:BsplitIso} 
Let $X = Y \oplus Z =  Y' \oplus Z'$ and suppose that $\Phi: Z \to Z'$ is a Banach space isomorphism.
Then, $X = Y \oplus Z \cong Y  \eoplus Z'$, in the sense of Banach spaces and  where $\eoplus$ denotes the external sum.
\end{lemma}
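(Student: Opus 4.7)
The plan is to produce the isomorphism by composing two manifestly continuous linear maps: the canonical identification of the internal direct sum $Y \oplus Z$ with the external sum $Y \eoplus Z$, followed by the map induced by $\Phi$ on the second coordinate.

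More precisely, I would first define $\Psi : Y \eoplus Z \to X$ by $\Psi(y,z) := y + z$. This is clearly linear, bounded (since $\|y+z\|_{X} \leq \|y\|_{Y} + \|z\|_{Z}$), and by the hypothesis that $X = Y \oplus Z$ is an internal direct sum of the closed subspaces $Y$ and $Z$, it is a bijection. Since $Y$ and $Z$ are closed in the Banach space $X$, the external sum $Y \eoplus Z$ is itself a Banach space under the norm $\|(y,z)\| := \|y\|_Y + \|z\|_Z$. Hence the bounded inverse theorem applies and $\Psi^{-1} : X \to Y \eoplus Z$ is bounded, i.e.\ the projections $P_{Y}$ and $P_{Z}$ onto $Y$ and $Z$ associated with the decomposition are continuous.

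Next, I would define $\Theta : Y \eoplus Z \to Y \eoplus Z'$ by $\Theta(y,z) := (y, \Phi z)$. Since $\Phi$ is a Banach space isomorphism between $Z$ and $Z'$, the map $\Theta$ is plainly linear, bounded by $\max\{1, \|\Phi\|\}$, bijective, and with bounded inverse $(y, z') \mapsto (y, \Phi^{-1} z')$. The composition $\Theta \circ \Psi^{-1} : X \to Y \eoplus Z'$ is then a bounded linear bijection with bounded inverse $\Psi \circ \Theta^{-1}$, i.e.\ a Banach space isomorphism, which is exactly the claim.

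There is not really a serious obstacle here; the only subtlety worth flagging is that one needs to know that the projections associated to the internal direct sum decomposition $X = Y \oplus Z$ are continuous, and this is precisely where the bounded inverse theorem (equivalently, the open mapping theorem) enters. Once this is in hand, everything else is formal diagram chasing on continuous linear maps.
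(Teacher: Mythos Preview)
Your proof is correct and essentially the same as the paper's: both construct the map $x = y + z \mapsto (y,\Phi z)$ and appeal to the open mapping / bounded inverse theorem to get continuity of the inverse. The only cosmetic difference is that you factor this map explicitly as $\Theta \circ \Psi^{-1}$, whereas the paper writes it directly and checks the norm equivalence $\|x\| \simeq \|y\| + \|\Phi z\|$ in one line; in particular, neither argument uses the hypothesis $X = Y' \oplus Z'$.
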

\begin{proof}
Let $x \in X = Y \oplus Z$. Then, $x = y + z$, for $y \in Y$ and $z \in Z$. We write
\begin{align*}
\norm{x} \simeq \norm{y} + \norm{z}
	= \norm{y} + \norm{ \Phi^{-1} \circ \Phi z} 
	\simeq \norm{y} + \norm{\Phi z} 
\end{align*}
This shows that $x = y + z \mapsto (y, \Phi z)$ is a bounded linear map.
Easily, it is seen to be injective.
To see that it is surjective, let $(y, z') \in Y \oplus Z'$, then $x = y + \Phi^{-1}z \in X$ with $x \mapsto (y, \Phi \Phi^{-1}z) = (y, z')$.
This shows this is a bounded surjection and by the open mapping theorem, an open map.
This along with injectivity means that $x = y + z \mapsto (y, \Phi z)$ is a Banach space isomorphism.
\end{proof}

The following is taken from \cite{BaerBan}. 

\begin{lemma}
\label{Lem:SubIsom}
Let $\cB$ be a Hilbert space such that $\cB = \cB_1 \oplus \cB_2$ where $\cB_1,\ \cB_2 \subset \cB$ are closed subspaces.
Let $\cB_1'$ be another closed subspace such that $\cB = \cB_1' \oplus \cB_2$.
Then, $\Proj{\cB_1, \cB_2}\rest{\cB_1'}: \cB_1' \to \cB_1$ is an isomorphism with bounded inverse.
\end{lemma}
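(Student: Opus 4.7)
The plan is to verify injectivity and surjectivity of the restriction map directly, and then invoke the Open Mapping Theorem to conclude the inverse is bounded. Throughout, write $P := \Proj{\cB_1,\cB_2}$, the projection onto $\cB_1$ along $\cB_2$; this is a bounded linear operator on $\cB$ (in the Hilbert setting this is elementary, and in any case follows from the closed graph theorem since $\cB_1, \cB_2$ are closed and complementary). Since $\cB_1' \subset \cB$ is a closed subspace, the restriction $P\rest{\cB_1'}: \cB_1' \to \cB_1$ is automatically bounded with $\norm{P\rest{\cB_1'}} \leq \norm{P}$.

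For injectivity, suppose $u \in \cB_1'$ satisfies $P u = 0$. Then $u \in \ker(P) = \cB_2$ by the definition of the projection along $\cB_2$. Hence $u \in \cB_1' \cap \cB_2 = \set{0}$ by the hypothesis that $\cB = \cB_1' \oplus \cB_2$ is a direct sum decomposition.

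For surjectivity, fix $v \in \cB_1$. Using the second decomposition $\cB = \cB_1' \oplus \cB_2$, write $v = u + w$ uniquely with $u \in \cB_1'$ and $w \in \cB_2$. Applying $P$ to both sides, we get $P v = P u + P w$. Now $P v = v$ because $v \in \cB_1$, and $P w = 0$ because $w \in \cB_2$. Therefore $P u = v$, which shows $v \in \ran(P\rest{\cB_1'})$.

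At this point $P\rest{\cB_1'}: \cB_1' \to \cB_1$ is a bounded bijection between two Banach spaces (both are closed subspaces of the Hilbert space $\cB$, hence complete in the induced norm). The Open Mapping Theorem gives that the inverse is bounded, completing the proof. The only mild point to be careful about is ensuring $P$ itself is bounded, but this is guaranteed by the closed graph theorem applied to the splitting $\cB = \cB_1 \oplus \cB_2$ into closed subspaces. No genuine obstacle arises here; the lemma is essentially an exercise in unpacking the definition of the projection and invoking the standard two-Banach-space isomorphism result.
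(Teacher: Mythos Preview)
Your proof is correct and matches the paper's approach essentially line for line: the injectivity argument is identical, your surjectivity argument via decomposing $v = u + w$ is the same as the paper's use of $\Proj{\cB_1',\cB_2}$ written out explicitly, and both conclude with the Open Mapping Theorem.
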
 
\begin{proof}
Let $u \in \cB_1$ and note that 
$$ u = \Proj{\cB_1, \cB_2} u = \Proj{\cB_1,\cB_2} (\Proj{\cB_1', \cB_2} u + \Proj{\cB_2, \cB_1'}) u = \Proj{\cB_1,\cB_2} \Proj{\cB_1', \cB_2} u, $$
because $\Proj{\cB_1,\cB_2} \circ \Proj{\cB_2, \cB_1'} = 0$.
This shows surjectivity. Now, let $u \in \cB_1'$ and suppose that $\Proj{\cB_1, \cB_2} u = 0$. 
But then, $u \in \nul(\Proj{\cB_1, \cB_2}) = \cB_2$ and so, $u \in \cB_1' \intersect \cB_2 = \set{0}$, which shows that the map is injective. Since it is a bounded map on all of $\cB_1$, it is bounded, and by the closedness of $\cB_1$ and the open mapping theorem, we can conclude that it has a bounded inverse.
\end{proof}

\bibliographystyle{alpha}

\providecommand{\noopsort}[1]{}\def\cprime{$'$}
% \bib, bibdiv, biblist are defined by the amsrefs package.
\begin{bibdiv}
\begin{biblist}

\bib{AMcR}{article}{
      author={Auscher, Pascal},
      author={McIntosh, Alan},
      author={Russ, Emmanuel},
       title={Hardy spaces of differential forms on {Riemannian} manifolds},
    language={English},
        date={2008},
        ISSN={1050-6926},
     journal={The Journal of Geometric Analysis},
      volume={18},
      number={1},
       pages={192\ndash 248},
}

\bib{AKMc}{article}{
      author={Axelsson, Andreas},
      author={Keith, Stephen},
      author={McIntosh, Alan},
       title={Quadratic estimates and functional calculi of perturbed {D}irac
  operators},
        date={2006},
        ISSN={0020-9910},
     journal={Invent. Math.},
      volume={163},
      number={3},
       pages={455\ndash 497},
}

\bib{AxMc}{article}{
      author={Axelsson, Andreas},
      author={McIntosh, Alan},
       title={Hodge decompositions on weakly lipschitz domains},
        date={2004},
     journal={Advances in Analysis and Geometry: New Developments Using
  Clifford Algebras},
       pages={3\ndash 29},
         url={https://doi.org/10.1007/978-3-0348-7838-8_1},
}

\bib{BaerBan}{article}{
      author={{\noopsort{Baer}}{Bär}, Christian},
      author={{Bandara}, Lashi},
       title={Boundary value problems for general first-order elliptic
  differential operators},
        date={2022},
        ISSN={0022-1236},
     journal={Journal of Functional Analysis},
      volume={282},
      number={12},
       pages={109445},
  url={https://www.sciencedirect.com/science/article/pii/S0022123622000659},
}

\bib{Bai20}{article}{
      author={Bailey, Julian},
       title={The {Kato} square root problem for divergence form operators with
  potential},
    language={English},
        date={2020},
        ISSN={1069-5869},
     journal={The Journal of Fourier Analysis and Applications},
      volume={26},
      number={3},
       pages={58},
        note={Id/No 46},
}

\bib{B13}{incollection}{
      author={Bandara, Lashi},
       title={Quadratic estimates for perturbed {Dirac} type operators on
  doubling measure metric spaces},
    language={English},
        date={2013},
   booktitle={{AMSI} {I}nternational {C}onference on {H}armonic {A}nalysis and
  {A}pplications. {P}roceedings of the conference, {M}acquarie {U}niversity,
  {S}ydney, {A}ustralia, {F}ebruary 7--11, 2011},
   publisher={Canberra: Australian National University, Centre for Mathematics
  {and} its Applications},
       pages={1\ndash 21},
}

\bib{BRough}{article}{
      author={Bandara, Lashi},
       title={Rough metrics on manifolds and quadratic estimates},
        date={2016},
        ISSN={0025-5874},
     journal={Mathematische Zeitschrift},
      volume={283},
      number={3-4},
       pages={1245\ndash 1281},
         url={http://dx.doi.org/10.1007/s00209-016-1641-x},
      review={\MR{3520003}},
}

\bib{BGS}{article}{
      author={Bandara, Lashi},
      author={Goffeng, Magnus},
      author={Saratchandran, Hemanth},
       title={Realisations of elliptic operators on compact manifolds with
  boundary},
        date={2023},
        ISSN={0001-8708},
     journal={Advances in Mathematics},
      volume={420},
       pages={108968},
  url={https://www.sciencedirect.com/science/article/pii/S0001870823001111},
}

\bib{BMc16}{article}{
      author={Bandara, Lashi},
      author={McIntosh, Alan},
       title={The {Kato} square root problem on vector bundles with generalised
  bounded geometry},
    language={English},
        date={2016},
        ISSN={1050-6926},
     journal={The Journal of Geometric Analysis},
      volume={26},
      number={1},
       pages={428\ndash 462},
}

\bib{BB12}{incollection}{
      author={B{\"a}r, Christian},
      author={Ballmann, Werner},
       title={Boundary value problems for elliptic differential operators of
  first order},
        date={2012},
   booktitle={Surveys in differential geometry. {V}ol. {XVII}},
      series={Surv. Differ. Geom.},
      volume={17},
   publisher={Int. Press, Boston, MA},
       pages={1\ndash 78},
         url={http://dx.doi.org/10.4310/SDG.2012.v17.n1.a1},
      review={\MR{3076058}},
}

\bib{dG}{misc}{
      author={De~Giorgi, Ennio},
       title={Sulla differenziabilit{\`a} e l'analiticit{\`a} delle estremali
  degli integrali multipli regolari},
    language={Italian},
         how={Mem. {Accad}. {Sci}. {Torino}, {P}. {I}., {III}. {Ser}. 3, 25-43
  (1957).},
        date={1957},
}

\bib{deRham}{misc}{
      author={de~Rham, Georges},
       title={Sur l'analysis situs des vari{\'e}t{\'e}s {\`a} {{\(n\)}}
  dimensions.},
    language={French},
         how={Th{\`e}se. {Journ}. de {Math}. (9) 10, 115-200 (1931).},
        date={1931},
}

\bib{FMcP18}{article}{
      author={Frey, Dorothee},
      author={McIntosh, Alan},
      author={Portal, Pierre},
       title={Conical square function estimates and functional calculi for
  perturbed {Hodge}-{Dirac} operators in {{\(L^p\)}}},
    language={English},
        date={2018},
        ISSN={0021-7670},
     journal={Journal d'Analyse Math{\'e}matique},
      volume={134},
      number={2},
       pages={399\ndash 453},
}

\bib{GMM}{article}{
      author={Gol'dshtein, Vladimir},
      author={Mitrea, Irina},
      author={Mitrea, Marius},
       title={Hodge decompositions with mixed boundary conditions and
  applications to partial differential equations on {Lipschitz} manifolds},
    language={English},
        date={2011},
        ISSN={1072-3374},
     journal={Journal of Mathematical Sciences (New York)},
      volume={172},
      number={3},
       pages={347\ndash 400},
}

\bib{HMcP08}{article}{
      author={Hyt{\"o}nen, Tuomas},
      author={McIntosh, Alan},
      author={Portal, Pierre},
       title={Kato's square root problem in {Banach} spaces},
    language={English},
        date={2008},
        ISSN={0022-1236},
     journal={Journal of Functional Analysis},
      volume={254},
      number={3},
       pages={675\ndash 726},
}

\bib{HMcP11}{article}{
      author={Hyt{\"o}nen, Tuomas},
      author={McIntosh, Alan},
      author={Portal, Pierre},
       title={Holomorphic functional calculus of {Hodge}-{Dirac} operators in
  {{\(L ^{p }\)}}},
    language={English},
        date={2011},
        ISSN={1424-3199},
     journal={Journal of Evolution Equations},
      volume={11},
      number={1},
       pages={71\ndash 105},
}

\bib{Kato}{book}{
      author={Kato, Tosio},
       title={Perturbation theory for linear operators},
     edition={Second},
   publisher={Springer-Verlag},
     address={Berlin},
        date={1976},
        note={Grundlehren der Mathematischen Wissenschaften, Band 132},
}

\bib{SL16}{article}{
      author={Leopardi, Paul},
      author={Stern, Ari},
       title={The abstract {Hodge}-{Dirac} operator and its stable
  discretization},
    language={English},
        date={2016},
        ISSN={0036-1429},
     journal={SIAM Journal on Numerical Analysis},
      volume={54},
      number={6},
       pages={3258\ndash 3279},
}

\bib{McMon18}{article}{
      author={McIntosh, Alan},
      author={Monniaux, Sylvie},
       title={Hodge-{Dirac}, {Hodge}-{Laplacian} and {Hodge}-{Stokes} operators
  in {{\(L^p\)}} spaces on {Lipschitz} domains},
    language={English},
        date={2018},
        ISSN={0213-2230},
     journal={Revista Matem{\'a}tica Iberoamericana},
      volume={34},
      number={4},
       pages={1711\ndash 1753},
}

\bib{Mon21}{article}{
      author={Monniaux, Sylvie},
       title={Existence in critical spaces for the magnetohydrodynamical system
  in 3d bounded {Lipschitz} domains},
    language={English},
        date={2021},
        ISSN={2296-9020},
     journal={Journal of Elliptic and Parabolic Equations},
      volume={7},
      number={2},
       pages={311\ndash 322},
}

\bib{Mor12}{article}{
      author={Morris, Andrew~J.},
       title={The {Kato} square root problem on submanifolds},
    language={English},
        date={2012},
        ISSN={0024-6107},
     journal={Journal of the London Mathematical Society. Second Series},
      volume={86},
      number={3},
       pages={879\ndash 910},
}

\bib{Moser}{article}{
      author={Moser, J{\"u}rgen},
       title={A new proof of de {Giorgi}'s theorem concerning the regularity
  problem for elliptic differential equations},
    language={English},
        date={1960},
        ISSN={0010-3640},
     journal={Communications on Pure and Applied Mathematics},
      volume={13},
       pages={457\ndash 468},
}

\bib{Nash}{article}{
      author={Nash, John~F.},
       title={Continuity of solutions of parabolic and elliptic equations},
    language={English},
        date={1958},
        ISSN={0002-9327},
     journal={American Journal of Mathematics},
      volume={80},
       pages={931\ndash 954},
  url={semanticscholar.org/paper/296e4853963ff4e3119f26eb342fafaa142b1850},
}

\bib{OS}{article}{
      author={Otsu, Yukio},
      author={Shioya, Takashi},
       title={The {Riemannian} structure of {Alexandrov} spaces},
    language={English},
        date={1994},
        ISSN={0022-040X},
     journal={Journal of Differential Geometry},
      volume={39},
      number={3},
       pages={629\ndash 658},
}

\bib{RosenBook}{book}{
      author={Ros{\'e}n, Andreas},
       title={Geometric multivector analysis. {From} {Grassmann} to {Dirac}},
    language={English},
      series={Birkh{\"a}user Adv. Texts, Basler Lehrb{\"u}ch.},
   publisher={Cham: Birkh{\"a}user},
        date={2019},
        ISBN={978-3-030-31410-1; 978-3-030-31411-8},
}

\bib{Simon}{book}{
      author={Simon, Leon},
       title={Lectures on geometric measure theory},
      series={Proceedings of the Centre for Mathematical Analysis, Australian
  National University},
   publisher={Australian National University, Centre for Mathematical Analysis,
  Canberra},
        date={1983},
      volume={3},
        ISBN={0-86784-429-9},
      review={\MR{756417}},
}

\bib{Teleman}{article}{
      author={Teleman, Nicolae},
       title={The index of signature operators on {L}ipschitz manifolds},
        date={1983},
        ISSN={0073-8301},
     journal={Inst. Hautes \'{E}tudes Sci. Publ. Math.},
      number={58},
       pages={39\ndash 78 (1984)},
         url={http://www.numdam.org/item?id=PMIHES_1983__58__39_0},
      review={\MR{720931}},
}

\end{biblist}
\end{bibdiv}
\setlength{\parskip}{-.09\baselineskip}
\end{document}